\numberwithin{equation}{section}
\newtheorem{theorem}{Theorem}[section]
\newtheorem{proposition}[theorem]{Proposition}
\newtheorem{lemma}[theorem]{Lemma}
\newtheorem{corollary}[theorem]{Corollary}
\theoremstyle{definition}
\newtheorem*{definition*}{Definition}
\newtheorem{definition}[theorem]{Definition}
\newtheorem*{remark*}{Remark}
\newtheorem{remark}[theorem]{Remark}
\newtheorem{example}[theorem]{Example}
\newcommand{\R}{\mathbb{R}}
\newcommand{\C}{\mathbb{C}}
\newcommand{\Nor}{\mathcal{N}}
\newcommand{\Id}{\operatorname{Id}}
\newcommand{\1}{\mathbbm{1}}
\newcommand{\Tr}{\operatorname{Tr}}
\newcommand{\col}{\operatorname{col}}
\newcommand{\A}{\mathcal{A}}
\newcommand{\D}{\mathcal{D}}
\newcommand{\F}{\mathbf{F}}
\newcommand{\cR}{\mathcal{R}}
\newcommand{\E}{\mathbb{E}}
\renewcommand{\H}{\mathcal{H}}
\newcommand{\G}{\mathcal{G}}
\newcommand{\B}{\mathcal{B}}
\newcommand{\cC}{\mathcal{C}}
\newcommand{\diag}{\operatorname{diag}}
\newcommand{\e}{\mathbf{1}}
\newcommand{\J}{\mathbf{J}}
\newcommand{\I}{\mathcal{I}}
\newcommand{\N}{\mathbb{N}}
\newcommand{\Q}{\mathcal{Q}}
\renewcommand{\S}{\mathcal{S}}
\newcommand{\eps}{\varepsilon}
\newcommand{\rank}{\operatorname{rank}}
\renewcommand{\a}{\alpha}
\newcommand{\cI}{\mathcal{I}}
\renewcommand{\i}{\mathbf{i}}
\renewcommand{\j}{\mathbf{j}}
\renewcommand{\SS}{\mathrm{SS}}
\newcommand{\MS}{\mathrm{MS}}
\newcommand{\ii}{\mathfrak{q}}
\newcommand{\jj}{\mathfrak{q'}}
\newcommand{\cD}{\mathcal{D}}
\title[Eigenvalue Distributions of MANOVA Estimators]
{Eigenvalue distributions of variance components estimators in
high-dimensional random effects models}
\author{Zhou Fan}
\author{Iain Johnstone}
\address{Department of Statistics, Stanford University}
\email{zhoufan@stanford.edu, imj@stanford.edu}
\thanks{ZF is supported by a Hertz Foundation Fellowship and an NDSEG
Fellowship (DoD, Air Force Office of Scientific Research, 32 CFR
168a). ZF and IMJ are also supported in part by NIH grant R01 EB001988.}
\begin{document}
\maketitle

\begin{abstract}
We study the spectra of MANOVA estimators for
variance component covariance matrices in multivariate random effects models.
When the dimensionality of the observations is large and
comparable to the number of realizations of
each random effect, we show that the empirical spectra of such
estimators are well-approximated by deterministic laws. The Stieltjes transforms
of these laws are characterized by systems of fixed-point equations, which
are numerically solvable by a simple iterative procedure. Our proof uses
operator-valued free probability theory, and we establish a general asymptotic
freeness result for families
of rectangular orthogonally-invariant random matrices, which is of independent
interest. Our work is motivated by the estimation of components of covariance
between multiple phenotypic traits in quantitative genetics, and we specialize
our results to common experimental designs that arise in this application.
\end{abstract}

\section{Introduction}
Since the work of R.\ A.\ Fisher, random effects
linear models have played a foundational role in quantitative
genetics. Fisher described the decomposition of the variance of a
quantitative trait in a population into components, which
may be estimated by observing these traits in individuals of different
relations \cite{fisher}. One important motivation for estimating these
components is in predicting the
evolutionary response of the population to natural or artificial selection.
If an episode of selection changes the mean value of a trait in this
generation by $S$, the change $\Delta \mu$
inherited by the next generation is predicted by the breeders' equation
$\Delta \mu=\sigma_A^2(\sigma_z^2)^{-1}S$,
where $\sigma_z^2$ is the total population variance and
$\sigma_A^2$ is its additive genetic component. A common method of estimating
$\sigma_A^2$ is using a random effects
model with a suitable experimental design \cite{lynchwalsh}.

In reality, selection acting on a trait rarely only induces a response
in that single trait, but instead also affects genetically correlated traits
\cite{landearnold,phillipsarnold,blows}. Most of this correlation is likely due
to pleiotropy, the influence of a single gene on multiple traits,
and there is evidence that pleiotropic effects are widespread across the phenome
\cite{barton,walshblows,mcguiganetal,blowsmcguigan,blowsetal}. 
Letting $S \in \R^p$ denote the changes in mean values of $p$ traits in this
generation due to selection, the changes inherited by the next
generation are predicted by the multivariate breeders' equation
$\Delta \mu=GP^{-1}S$, where $P \in \R^{p \times p}$ is the total phenotypic
covariance of the traits and $G \in \R^{p \times p}$ is its additive
genetic component \cite{lande}. The response to selection may be understood via
the principal eigenvectors of $G$ and the alignment of the
``selection gradient'' $P^{-1}S$ with these eigenvectors.
Hence, there is significant interest in understanding the spectral
structure of $G$ \cite{kirkpatrick,walshblows,hineetal,blowsmcguigan}.
Analogously to the univariate setting, $G$ may be estimated by variance
components in multivariate random effects models.

Gene expression microarrays have enabled the measurements of thousands of
quantitative phenotypic traits in a single experimental study,
providing an opportunity to better understand the nature and extent of
pleiotropy and the effective dimensionality of possible evolutionary response
in the entire phenome of an
organism \cite{mcguiganetal,blowsetal}. However, the theory of
large random matrices \cite{paulaue} as well as numerical simulations
\cite{blowsmcguigan} both suggest that variance component matrices estimated in
these settings may exhibit significant spectral noise due to their high
dimensionality. In this work, we derive a characterization of the spectra
of such estimates.

We study the general multivariate random effects model
\begin{equation}\label{eq:mixedmodel}
Y=X\beta+\sum_{r=1}^k U_r\alpha_r,
\hspace{0.25in}\alpha_r \sim \Nor(0, \Id_{I_r} \otimes \Sigma_r).
\end{equation}
$Y \in \R^{n \times p}$ represents $n$ observations of $p$ traits,
modeled as a sum of fixed effects $X\beta$ and $k$ random effects
$U_1\alpha_1,\ldots,U_k\alpha_k$. (It is common to add a residual error term
$\eps$; for notational convenience we incorporate $\eps$ by allowing $U_k=\Id$
and $\alpha_k=\eps$.)
$X \in \R^{n \times m}$ and
$U_r \in \R^{n \times I_r}$ are known design and incidence matrices.
Each $\alpha_r \in \R^{I_r \times p}$ is an unobserved random matrix with
i.i.d.\ rows distributed as $\Nor(0,\Sigma_r)$, representing $I_r$ independent
realizations of the $r^\text{th}$ effect. The regression
coefficients $\beta \in \R^{m \times p}$ and variance components
$\Sigma_r \in \R^{p \times p}$ are unknown parameters. We study
estimators of $\Sigma_r$ that are quadratic in $Y$ and invariant to $\beta$,
i.e.\ estimators of the form
\begin{equation}\label{eq:quadraticestimator}
\hat{\Sigma}_r=Y^TB_rY, \hspace{0.25in} (B_rX=0)
\end{equation}
for symmetric matrices $B_r \in \R^{n \times n}$. In particular, model
(\ref{eq:mixedmodel}) encompasses nested and 
crossed classification designs, and (\ref{eq:quadraticestimator}) encompasses
MANOVA estimators and MINQUEs. We discuss examples
in Section \ref{sec:applications}. We consider the asymptotic regime where
$n,I_1,\ldots,I_k$ grow proportionally. For classification designs,
this means that the number of groups at the highest level of division scales
proportionally with $n$, and all further sub-divisions remain bounded in size.
This is the relevant regime for experiments that estimate components
of phenotypic covariance, from considerations of both experimental practicality 
and optimal design \cite{robertsona,robertsonb}.

Our main result shows that when $p$ is also comparable to $n$,
the spectra of estimators (\ref{eq:quadraticestimator}) are
accurately predicted by deterministic laws which
depend on the true variance components $\Sigma_1,\ldots,\Sigma_k$. We
characterize these laws by systems of fixed-point equations in their Stieltjes
transforms, which generalize the Marcenko-Pastur equation for the usual 
sample covariance matrix \cite{marcenkopastur}. These equations
may be solved numerically to approximate the probability density functions of
these laws.

For sample covariance matrices, the Marcenko-Pastur
equation underpins many procedures for inferring the
population spectrum \cite{elkaroui,mestre,raoetal,baichenyao,ledoitwolf} and
debiasing sample eigenvalues in ``spiked'' covariance models
\cite{baiksilverstein,paul,benaychgeorgesnadakuditi,baiyao}. Similar inferential
questions are of interest in variance components applications, and we hope 
that our result will enable the study of such questions.

\subsection{Main result}\label{sec:result}
To present an analogy, we review the Marcenko-Pastur equation for
sample covariance matrices: Given $Y \in \R^{n \times p}$ consisting
of $n$ i.i.d.\ rows with distribution $\Nor(0,\Sigma)$, consider the sample
covariance $\hat{\Sigma}=n^{-1}Y^TY$. When $\Sigma=\Id$, the spectrum
of $\hat{\Sigma}$ is well-approximated by the Marcenko-Pastur law
\cite{marcenkopastur,silversteinbai}. More
generally, for any $\Sigma$, the spectrum of
$\hat{\Sigma}$ is predicted by the Marcenko-Pastur equation:
\begin{theorem}[\cite{marcenkopastur,silversteinbai}]\label{thm:mp}
Let $\mu_{\hat{\Sigma}}=p^{-1}\sum_{i=1}^p \delta_{\lambda_i(\hat{\Sigma})}$ 
denote the empirical spectral measure of $\hat{\Sigma}$.
Suppose $n,p \to \infty$ such that $c<p/n<C$ and
$\|\Sigma\|<C$ for some constants $C,c>0$. Then for each
$z \in \C^+$, there exists a unique value $m_0(z) \in \C^+$ satisfying
\begin{equation}\label{eq:mpequation}
m_0(z)=\frac{1}{p}\Tr\left[\left(\left(1-\frac{p}{n}
-\frac{p}{n}zm_0(z)\right)\Sigma-z\Id_p\right)^{-1}\right],
\end{equation}
and $m_0:\C^+ \to \C^+$
defines the Stieltjes transform of a ($n,p,\Sigma$-dependent) probability
measure $\mu_0$ on $\R$ such that $\mu_{\hat{\Sigma}}-\mu_0 \to 0$ weakly
almost surely.
\end{theorem}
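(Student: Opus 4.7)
I would follow the classical resolvent / Stieltjes transform approach. Write $Y=Z\Sigma^{1/2}$ where $Z\in\R^{n\times p}$ has i.i.d.\ $\Nor(0,1)$ entries with rows $y_1^T,\ldots,y_n^T$. Let $G(z)=(\hat{\Sigma}-z\Id_p)^{-1}$ and let $m_n(z)=p^{-1}\Tr G(z)$ be the Stieltjes transform of $\mu_{\hat{\Sigma}}$. The plan is to show $m_n(z)\to m_0(z)$ almost surely for each $z\in\C^+$, then conclude $\mu_{\hat{\Sigma}}-\mu_0\to 0$ weakly a.s.\ via the Stieltjes continuity theorem.

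The first step is concentration: $Z\mapsto m_n(z)$ is Lipschitz (with constant depending on $\|\Sigma\|$ and $1/|\Im z|$), so Gaussian concentration together with Borel--Cantelli yields $|m_n(z)-\E m_n(z)|\to 0$ almost surely for each fixed $z\in\C^+$. The second step derives a self-consistent equation. Starting from $\hat{\Sigma}G(z)=\Id_p+zG(z)$, expanding $\hat{\Sigma}=n^{-1}\sum_i y_iy_i^T$, and applying the Sherman--Morrison formula to factor $y_i$ out of the resolvent yields
\begin{equation*}
-zm_n(z)=1-\frac{1}{p}\sum_{i=1}^n\frac{q_i(z)}{1+q_i(z)},\quad q_i(z):=n^{-1}y_i^TG_{(i)}(z)y_i,
\end{equation*}
where $G_{(i)}(z)$ is the resolvent with the contribution of $y_i$ removed. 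Since $y_i\sim\Nor(0,\Sigma)$ is independent of $G_{(i)}(z)$, the Hanson--Wright inequality together with the rank-one perturbation bound $|\Tr G(z)-\Tr G_{(i)}(z)|\le 1/|\Im z|$ gives $q_i(z)\approx\tau_n(z):=n^{-1}\Tr(\Sigma G(z))$, uniformly in $i$. Substituting and rearranging yields the approximate relation $\tau_n(z)\approx (p/n)(1+zm_n(z))/a(z)$ with $a(z):=1-p/n-(p/n)zm_n(z)$. On the other hand, the algebraic identity $a(z)\Sigma(a(z)\Sigma-z\Id_p)^{-1}=\Id_p+z(a(z)\Sigma-z\Id_p)^{-1}$ shows that the deterministic candidate $\bar G(z):=(a(z)\Sigma-z\Id_p)^{-1}$ satisfies exactly the same relation between $n^{-1}\Tr(\Sigma\bar G(z))$ and $\bar m(z):=p^{-1}\Tr\bar G(z)$ as $\tau_n(z)$ and $m_n(z)$. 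Hence $m_n(z)$ must approximately satisfy (\ref{eq:mpequation}) with vanishing error.

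Existence and uniqueness of a solution $m_0:\C^+\to\C^+$ to (\ref{eq:mpequation}) follows by verifying that the fixed-point map on the right-hand side is analytic, maps $\C^+$ into itself (a Nevanlinna-type property using $\Sigma\succeq 0$ and $\Im z>0$), and is a strict contraction in the hyperbolic metric on $\C^+$ for $|\Im z|$ sufficiently large; uniqueness then extends to all of $\C^+$ by analytic continuation. The resulting $m_0$ satisfies $\Im m_0(z)>0$ and $\lim_{y\to\infty}iy\,m_0(iy)=-1$, hence is the Stieltjes transform of some probability measure $\mu_0$ on $\R$. The main technical obstacle is controlling the cumulative approximation errors in the derivation of the self-consistent equation uniformly on compact subsets of $\C^+$, together with a stability analysis of (\ref{eq:mpequation}) ensuring that $m_n(z)$ is forced to be close to the unique solution $m_0(z)$ rather than merely to some solution of a perturbed equation.
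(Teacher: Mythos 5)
Your sketch is a correct outline of the classical Silverstein--Bai proof: write $m_n(z)=p^{-1}\Tr(\hat\Sigma - z)^{-1}$, establish concentration $m_n - \E m_n \to 0$ via Gaussian Lipschitz concentration, derive a self-consistent equation through the leave-one-out / Sherman--Morrison manipulation $y_i^TGy_i = nq_i/(1+q_i)$ with $q_i = n^{-1}y_i^TG_{(i)}y_i \approx n^{-1}\Tr(\Sigma G)$ by Hanson--Wright plus a rank-one resolvent bound, then establish existence/uniqueness of the deterministic fixed point by a Nevanlinna-type argument and a stability analysis. The algebra you set up ($a(z)(1+\tau_n)=1$, so the candidate resolvent is $(a(z)\Sigma - z\Id)^{-1}$) is exactly right, though the final step---turning the matching of relations between $(\tau_n,m_n)$ and $(n^{-1}\Tr\Sigma\bar G,\bar m)$ into $m_n\to m_0$---needs the explicit stability estimate you flag as the ``main technical obstacle,'' which is indeed the delicate part of the classical argument.

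The paper, however, takes a genuinely different route. Theorem \ref{thm:mp} is stated here only as a cited background result; the paper's own machinery proves the substantially more general Theorem \ref{thm:bulkdistribution} via operator-valued free probability, and Remark \ref{remark:MP} then recovers the Marcenko--Pastur equation as the $k=1$, $U_1=\Id$, $B=n^{-1}\Id$ special case. Appendix \ref{appendix:march-past-case} works out this special case in full using the paper's tools: the matrices $H=\Sigma^{1/2}$ and $G$ are embedded as simple elements of a $3\times 3$ block rectangular probability space, a free deterministic equivalent $(h,g)$ is constructed with $g^*g$ having a Marcenko--Pastur $\D$-law, the asymptotic freeness Theorem \ref{thm:momentapprox} replaces your concentration and leave-one-out steps, and the fixed-point equation emerges algebraically from the subordination identity $\cR_w^\H(c)=h^*h\,\tau_1(\cR^\D_{g^*g}(p_1\tau_1(hch^*)))$ combined with the explicit $\cR$-transform $\cR_{\nu_\lambda}(z)=(1-\lambda z)^{-1}$ of the Marcenko--Pastur law. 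The trade-off: your classical proof is elementary and self-contained, relying only on resolvent identities and concentration of quadratic forms, but the perturbative error bookkeeping scales poorly as the model grows more complex. The paper's approach cleanly separates the approximation step (asymptotic freeness, proved once in generality) from the algebraic computation of the limiting Stieltjes transform, which is why it extends with little extra effort to the multi-component, multiply-nested variance-components models of Theorem \ref{thm:bulkdistribution}---something that would require a formidable amount of resolvent gymnastics by the classical route.
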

\noindent The Stieltjes transform $m_0$ determines $\mu_0$ via the
Stieltjes inversion formula.

Theorem \ref{thm:mp} is usually stated in an alternative form, assuming
convergence of $p/n$ to $\gamma \in (0,\infty)$ and of the spectrum of
$\Sigma$ to a weak limit $\mu^*$. In this case $\mu_{\hat{\Sigma}}$ converges
to a fixed weak limit $\mu_0$ depending on $\gamma$ and $\mu^*$. We have stated
this theorem instead in a ``deterministic equivalent'' form
\cite{hachemetal,couilletetal}, where $\mu_0$ is defined by the finite-sample
quantities $p/n$ and $\Sigma$. This form is arguably more closely tied to
applications, since one typically computes the analytic
prediction for $\mu_{\hat{\Sigma}}$ directly from these finite-sample
quantities, rather than first passing to an abstract limit. (See also
the discussion in \cite{elkaroui}.)

The main result of our paper is the following extension of Theorem
\ref{thm:mp} to the setting of model (\ref{eq:mixedmodel}).
Consider $\hat{\Sigma}=Y^TBY$
for symmetric $B \in \R^{n \times n}$ satisfying $BX=0$.
Define $I_+=\sum_{r=1}^k I_r$,
\[U=\begin{pmatrix}
\sqrt{I_1}U_1 \mid \sqrt{I_2}U_2 \mid \cdots \mid \sqrt{I_k}U_k\end{pmatrix}
\in \R^{n \times I_+},\qquad
F=U^TBU \in \R^{I_+ \times I_+}.\]
For any $F \in \C^{I_+ \times I_+}$, let $\Tr_r F$ denote the trace of its
$(r,r)$ block in the $k \times k$ block decomposition corresponding to
$\C^{I_+}=\C^{I_1} \oplus \cdots \oplus \C^{I_k}$.
For $a=(a_1,\ldots,a_k)$ and $b=(b_1,\ldots,b_k)$, define
\[D(a)=\diag(a_1\Id_{I_1},\ldots,a_k\Id_{I_k}) \in \C^{I_+ \times I_+},
\quad b \cdot \Sigma=b_1 \Sigma_1+\ldots+b_k \Sigma_k.\]
We state our result also in deterministic equivalent form, which
avoids imposing ``joint convergence'' assumptions on
$\Sigma_1,\ldots,\Sigma_k$:

\begin{theorem}\label{thm:bulkdistribution}
Suppose $n,p,I_1,\ldots,I_k \to \infty$ such that $c<p/n<C$, $c<I_r/n<C$,
$n\|B\|<C$, $\|\Sigma_r\|<C$, and $\|U_r\|<C$ for each $r=1,\ldots,k$ and some
constants $C,c>0$.
Then for each $z \in \C^+$, there exist unique $z$-dependent values
$a_1,\ldots,a_k \in \C^+ \cup \{0\}$ and
$b_1,\ldots,b_k \in \overline{\C^+}$
that satisfy, for $r=1,\ldots,k$, the equations
\begin{align}
a_r&=-\tfrac{1}{I_r}\Tr\left((z\Id_p+b \cdot \Sigma)^{-1}\Sigma_r\right)
\label{eq:arecursion},\\
b_r&=-\tfrac{1}{I_r}\Tr_r \left([\Id_{I_+}+FD(a)]^{-1}F\right).
\label{eq:brecursion}
\end{align}
The function $m_0:\C^+ \to \C^+$ given by
\begin{equation}\label{eq:m0}
m_0(z)=-\tfrac{1}{p}\Tr\left((z\Id_p+b \cdot \Sigma)^{-1}\right)
\end{equation}
defines the Stieltjes transform of a probability
measure $\mu_0$ on $\R$ such that $\mu_{\hat{\Sigma}}-\mu_0 \to 0$ weakly
almost surely.
\end{theorem}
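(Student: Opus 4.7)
The plan is to reduce $\hat\Sigma$ to a quadratic form in a stacked orthogonally-invariant Gaussian matrix and then apply operator-valued free probability. Because $BX=0$, the fixed-effect term drops out: $\hat\Sigma=Y^TBY$ with $Y=\sum_r U_r\alpha_r$. Writing $\alpha_r=W_r\Sigma_r^{1/2}$ for $W_r$ with i.i.d.\ standard Gaussian entries and setting
\[\tilde W_r=I_r^{-1/2}W_r\Sigma_r^{1/2},\qquad \tilde W=\col(\tilde W_1,\ldots,\tilde W_k)\in\R^{I_+\times p},\]
one verifies that $U\tilde W=Y$, so $\hat\Sigma=\tilde W^TF\tilde W$. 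The spectral problem for $\hat\Sigma$ is thus governed by a deterministic sandwich matrix $F$ acting on a block-Gaussian $\tilde W$ whose $r$-th block is left-orthogonally invariant with scale $I_r^{-1/2}$.

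Next I would invoke the paper's general asymptotic freeness theorem for rectangular orthogonally-invariant matrices. Amalgamating over the algebra $\cD$ of block-scalar matrices $\diag(a_1\Id_{I_1},\ldots,a_k\Id_{I_k})$ on $\C^{I_+}$ equipped with the conditional expectation given by the normalized partial traces $I_r^{-1}\Tr_r$, the independent bi-orthogonally-invariant $W_r$'s become asymptotically $\cD$-free from both $F$ and the block-diagonal absorption of the $\Sigma_r^{1/2}$'s. To extract the bulk spectrum I would then use a linear pencil such as
\[L(z)=\begin{pmatrix}-z\Id_p & \tilde W^T \\ F\tilde W & -\Id_{I_+}\end{pmatrix},\]
for which a Schur complement computation gives $(L(z)^{-1})_{11}=-(\hat\Sigma-z)^{-1}$. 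Applying the subordination / Dyson equations for the operator-valued Cauchy transform of $L(z)$ and projecting under the partial traces on either side yields, in the limit, exactly the coupled system (\ref{eq:arecursion})--(\ref{eq:brecursion}): $b_r$ is the subordinator on the $p$-side (the contribution of the $r$-th Gaussian block to the effective spectral parameter seen by $\Sigma_r$) and $a_r$ is the subordinator on the $I_+$-side seen by the $r$-th block of $F$, while (\ref{eq:m0}) is the normalized trace of the resolvent after substituting these subordinators.

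To close the argument I would prove uniqueness of the pair $(a,b)\in(\C^+\cup\{0\})^k\times\overline{\C^+}^k$ by a contractive-iteration / Denjoy--Wolff argument on a suitable product of upper half-planes, using that the right-hand sides of (\ref{eq:arecursion})--(\ref{eq:brecursion}) define holomorphic self-maps with the correct boundary behavior, and check that $m_0$ given by (\ref{eq:m0}) is Nevanlinna, hence a genuine Stieltjes transform of a probability measure on $\R$. The weak almost-sure convergence $\mu_{\hat\Sigma}-\mu_0\to 0$ then follows from Gaussian concentration of the empirical Stieltjes transform of $\hat\Sigma$ about its expectation (applied to $W_1,\ldots,W_k$) together with the asymptotic freeness result, which identifies the limit of the expectation with $m_0(z)$. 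The main obstacle I anticipate is the rigorous passage from abstract $\cD$-valued asymptotic freeness to the quantitative deterministic-equivalent form: this requires the operator-valued probability space to be set up so that the partial traces appearing in the theorem statement are precisely the conditional expectations used in the freeness theorem, plus a stability estimate on the fixed-point map to control the finite-$n$ error uniformly on compact subsets of $\C^+$.
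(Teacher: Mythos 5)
Your opening reduction is correct: with $\tilde W=\col(\tilde W_1,\ldots,\tilde W_k)$ one indeed has $\hat\Sigma=\tilde W^TFU\tilde W$ with $U\tilde W = Y$, hence $\hat\Sigma=\tilde W^TF\tilde W$, and the high-level plan (orthogonal invariance, asymptotic $\D$-freeness, a fixed-point contraction for uniqueness, Nevanlinna check, almost-sure convergence) tracks the paper's outline. But there are substantive gaps in the middle.

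The key architectural issue is the absorption of $\Sigma_r^{1/2}$ into $\tilde W_r$. Once $\tilde W_r=I_r^{-1/2}W_r\Sigma_r^{1/2}$, the block $\tilde W_r$ is only left-orthogonally invariant, so its marginal $\D$-law is a $\Sigma_r$-deformed Wishart law, not a standard Marcenko--Pastur law; and since all $\tilde W_r$ share the same unrotated $p$-side, their \emph{joint} $\D$-law is not determined by marginals plus freeness. This is precisely the information that has to produce $b\cdot\Sigma$ in \eqref{eq:arecursion} and \eqref{eq:m0}, and "projecting under the partial traces on either side yields exactly the coupled system" is where all the work lies, unexplained. The paper avoids this trap by \emph{not} absorbing: it keeps three independent block-orthogonally-invariant families --- the deterministic $\{F_{rs}\}$, the deterministic $\{H_r=\Sigma_r^{1/2}\}$ (each randomly rotated), and the bi-orthogonally-invariant i.i.d.\ Gaussian $\{G_r\}$ --- embedded in a larger space $\C^N$, $N=p+\sum m_r+\sum n_r$, with an extra layer of intermediate blocks. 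The fixed-point system then falls out of a two-stage operator-valued $\cR$-transform unfolding (Lemma~\ref{lemma:freecompute} applied twice, reducing $w=\sum h_r^*g_r^*f_{rs}g_sh_s$ to $v=\sum g_r^*f_{rs}g_s$ to $u=\sum f_{rs}$), where the $g_r$'s contribute via the explicit Marcenko--Pastur $\cR$-transform. A $2\times 2$ pencil in the $p$- and $I_+$-blocks alone cannot carry that three-way separation; the $\Sigma_r$'s would have to enter through a nontrivial $\D$-valued covariance of $\tilde W$, which you would then have to compute by essentially reintroducing the separation.

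Two further issues. The pencil $L(z)$ as written is not self-adjoint (the off-diagonal blocks are $\tilde W^T$ and $F\tilde W$), so the operator-valued subordination/Dyson-equation machinery you invoke does not apply directly; a self-adjoint linearization would need a $3\times3$ (or larger) block structure, again pushing toward the paper's bigger embedding. And on almost-sure convergence: the paper does not use Gaussian concentration; instead the asymptotic freeness theorem (Theorem~\ref{thm:momentapprox}) is proved directly in almost-sure form by a variance estimate plus Borel--Cantelli (Lemma~\ref{lemma:momentapprox}). Your concentration route may well work, but it is a different argument that would need the concentration of $\Tr Q(H)$ for fixed $*$-polynomials $Q$ spelled out, including for the randomly rotated deterministic families, not only the Gaussian factors.
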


Note that $\mu_0$ is a deterministic measure defined by
$\Sigma_1,\ldots,\Sigma_k$ and the structure of the model, whereas
$\mu_{\hat{\Sigma}}$ is random and depends on the data $Y$.

\begin{remark}\label{remark:MP}
When $Y$ has $n$ i.i.d.\ rows, the sample covariance $\hat{\Sigma}=n^{-1}Y^TY$
corresponds to the special case of (\ref{eq:mixedmodel}) with $k=1$,
$U_1=\Id$, $\Sigma_1=\Sigma$, and $B=n^{-1}\Id_n$. In this case, equations
(\ref{eq:arecursion}--\ref{eq:m0}) reduce to
\begin{equation}\label{eq:MPab}
a_1=-\tfrac{1}{n}
\Tr\left((z\Id_p+b_1\Sigma)^{-1}\Sigma\right),\qquad
b_1=-1/(1+a_1),
\end{equation}
\begin{equation}\label{eq:MPm0}
m_0(z)=-\tfrac{1}{p}\Tr\left((z\Id_p+b_1\Sigma)^{-1}\right),
\end{equation}
which imply (by the identity $A^{-1}-(A+B)^{-1}=A^{-1}B(A+B)^{-1}$)
\begin{align*}
-1-\frac{1}{b_1}=a_1
&=-\frac{z}{nb_1}\Tr\left((z\Id_p)^{-1}
-(z\Id_p+b_1\Sigma)^{-1}\right)\\
&=-\frac{p}{nb_1}+\frac{pzm_0(z)}{nb_1}.
\end{align*}
Hence $b_1=-1+(p/n)+(p/n)zm_0(z)$. Together with the
above expression for $m_0(z)$, this recovers the Marcenko-Pastur equation
(\ref{eq:mpequation}).
\end{remark}

In most cases, (\ref{eq:arecursion}--\ref{eq:m0}) do not admit a
closed-form solution in $a_1,\ldots,a_k$, $b_1,\ldots,b_k$, and $m_0(z)$.
However, these equations may be solved numerically:
\begin{theorem}\label{thm:algorithm}
For each $z \in \C^+$, the values $a_r$ and $b_r$
in Theorem \ref{thm:bulkdistribution} are the limits, as $t \to \infty$, of
the iterative procedure which arbitrarily initializes
$b_1^{(0)},\ldots,b_k^{(0)} \in \overline{\C^+}$
and iteratively computes (for $t=0,1,2,\ldots$) $a_r^{(t)}$ from $b_r^{(t)}$
using (\ref{eq:arecursion}) and $b_r^{(t+1)}$ from $a_r^{(t)}$ using
(\ref{eq:brecursion}).
\end{theorem}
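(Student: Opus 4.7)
The plan is to view the iteration as $b^{(t+1)} = T(b^{(t)})$ with $T = T_2 \circ T_1$, where $T_1 : b \mapsto a$ is the map in (\ref{eq:arecursion}) and $T_2 : a \mapsto b$ is the map in (\ref{eq:brecursion}), and then to prove convergence by showing that $T$ is a strict contraction with respect to a suitable hyperbolic-type metric. Since Theorem~\ref{thm:bulkdistribution} has already supplied a unique fixed point $(a^*, b^*)$ of the coupled system, it will follow that $b^{(t)} \to b^*$ from any initialization and then $a^{(t)} = T_1(b^{(t)}) \to a^*$ by continuity of $T_1$.

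First I would verify that $T_1, T_2$ extend continuously to $(\overline{\C^+})^k$ and that $T_1((\overline{\C^+})^k) \subset (\C^+)^k$ strictly. Because $z \in \C^+$ and $\operatorname{Im}(b \cdot \Sigma) \succeq 0$ for $b \in (\overline{\C^+})^k$, the matrix $z\Id_p + b \cdot \Sigma$ has imaginary part at least $\operatorname{Im}(z)\Id_p \succ 0$; tracing against $\Sigma_r \succeq 0$ yields $\operatorname{Im} a_r > 0$ with an explicit positive lower bound depending only on $\operatorname{Im}(z)$ and the model constants $c, C$. For $T_2$ I would rewrite the trace via the resolvent identity
\[
[\Id + F D(a)]^{-1} F = D(a)^{-1} - D(a)^{-1}[D(a)^{-1} + F]^{-1} D(a)^{-1},
\]
valid whenever $D(a)$ is invertible, and use that $\operatorname{Im} D(a)^{-1} \prec 0$ when each $a_r \in \C^+$, so $[D(a)^{-1}+F]^{-1}$ exists and has strictly positive imaginary part (since $F$ is real symmetric). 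Combined with the uniform bound $\|F\| \leq \|U\|^2\|B\| \leq C'$ obtained from the assumptions $\|U_r\|, n\|B\| \leq C$ together with $I_r \leq Cn$, this places the first-iterate pair $(a^{(0)}, b^{(1)})$ inside a fixed compact set $K \subset (\C^+)^{2k}$ depending only on $z$ and the model constants, and the same compactness then holds for all subsequent iterations.

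With $T$ a holomorphic self-map of $(\C^+)^k$ whose image has compact closure in $(\C^+)^k$, the Earle--Hamilton fixed-point theorem gives that $T$ is a strict contraction in the Kobayashi metric on $(\C^+)^k$; hence its orbit from any $b^{(0)} \in (\overline{\C^+})^k$ converges to the unique fixed point of $T$ in $K$, which by Theorem~\ref{thm:bulkdistribution} must coincide with $b^*$. The main technical obstacle is deriving a quantitative positive lower bound on $\operatorname{Im} b_r$ after one application of $T_2$: this is not evident from (\ref{eq:brecursion}) alone, because after applying the identity above $b_r$ contains a factor $a_r^{-2}$ whose imaginary part may have either sign, and one must argue that the dominant contribution comes from the operator-valued Cauchy-type quantity $[D(a)^{-1} + F]^{-1}$ (an alternative is to recognize $T_2$ directly as a matrix Cauchy transform of an operator-valued measure, for which preservation of the generalized upper half-plane is automatic). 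Once these quantitative bounds are in hand, the remainder of the argument is a standard invocation of the Earle--Hamilton contraction.
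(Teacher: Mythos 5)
Your proposal takes a genuinely different route from the paper's, and the obstacle you flag at the end is not a technicality but a real gap that the paper's structure is designed to avoid. The paper does not attempt a pointwise contraction at each fixed $z\in\C^+$. Instead (Lemma~\ref{lemma:contractivemap}) it works on the \emph{function space} $\S$ of analytic $k$-tuples $b:\C^+\to\overline{\C^+}$ satisfying a sup-norm bound on the region $\{\Im z>M\}$, shows that the composite $b\mapsto g(f(\cdot,b))$ is a strict $\ell^2$-contraction for the metric $\rho(b,\tilde b)=\sup_{\Im z>M}\|b(z)-\tilde b(z)\|$ once $M$ is large (this is an explicit resolvent estimate of the form $\|M^{(2)}M^{(1)}\|\le C^2(\Im z)^{-2}$), applies Banach, and then extends the pointwise convergence $\tilde b^{(t)}(z)\to b(z)$ from $\{\Im z>M\}$ to all of $\C^+$ via the Carath\'eodory--Landau/Vitali normal-families argument (Lemma~\ref{lemma:caratheodory}). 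This framework globalizes over $z$ precisely because one cannot hope for a uniform contraction property at small $\Im z$.

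The Earle--Hamilton route requires $T=T_2\circ T_1$ to be a holomorphic self-map of a bounded domain whose image is bounded away from the boundary. This fails here for two reasons, one of which you name. First, Lemma~\ref{lemma:domainrange}(b) only gives $g_r(a)\in\overline{\C^+}$, not $\C^+$: the map $T_2$ genuinely can send $a$ to the \emph{boundary} of the upper half-plane, and does so in concrete cases. For instance, in Corollary~\ref{cor:oneway}(b) one has $b_1\equiv 0$ identically, and in Corollary~\ref{cor:nested} the $b_s$ with $s<r$ vanish; similarly $a_r\equiv 0$ whenever $\Sigma_r=0$. So the unique fixed point of $T$ may itself lie on $\partial(\C^+)^k$, and no compact $K\subset(\C^+)^k$ containing $T((\C^+)^k)$ can exist in these cases. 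Second, $(\C^+)^k$ is unbounded, so one would need to pass through a bounded conformal model, which again runs into the boundary issue just described. Your proposed fallback --- interpreting $T_2$ as an operator-valued Cauchy transform that ``automatically'' preserves the upper half-plane --- only yields the weak inclusion $\Im b_r\ge 0$ that the paper already has in Lemma~\ref{lemma:domainrange}(b); it cannot supply the strict spectral gap that Earle--Hamilton needs. To repair your argument one would either have to quotient out the degenerate coordinates (the $r$ for which $b_r\equiv 0$ or $a_r\equiv 0$) and argue contraction on the remaining ones --- which requires a case analysis the paper avoids --- or abandon the per-$z$ approach and adopt something like the paper's large-$\Im z$ contraction plus normal-families extension, which also delivers (essentially for free) the analyticity in $z$ of the limit functions $a_r,b_r$ needed to define $\mu_0$ in Theorem~\ref{thm:bulkdistribution}.
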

This yields a method for computing the
density of $\mu_0$ in Theorem \ref{thm:bulkdistribution}:
By the Stieltjes inversion formula, the density at $x \in \R$ is
approximately $\pi^{-1}\Im m_0(x+i\eps)$ for small $\eps$, which we may
compute from $b_1,\ldots,b_k$ using the above procedure. A software
implementation is available upon request.

Theorems \ref{thm:bulkdistribution} and \ref{thm:algorithm} are inspired by the
study of similar models for wireless communication channels.
In particular, \cite{couilletetal} establishes analogous results for the matrix
\[S+\sum_{r=1}^k \Sigma_r^{1/2}G_r^*B_rG_r\Sigma_r^{1/2}\]
where $B_r \in \C^{n_r \times n_r}$ are positive semidefinite and diagonal.
Earlier work of \cite[Theorem 1.2.1]{zhang} considers $k=1$, $S=0$, and
arbitrary Hermitian $B_1$. For $S=0$, this model is encompassed by our
Theorem \ref{thm:Wdistribution}; however, we remark
that these works do not require Gaussian $G_r$.
In \cite{dupuyloubaton} and the earlier work of \cite{moustakassimon}
using the replica method, the authors study the model
\[\sum_{r,s=1}^k \Sigma_r^{1/2}G_r^*T_r^{1/2}T_s^{1/2}
G_s\Sigma_s^{1/2},\]
where $\Sigma_r,T_r$ are positive semidefinite and $G_r$ are complex
Gaussian. This model is similar to ours, and we recover their result
in Theorem \ref{thm:Wdistribution} using a
different proof. We note that \cite{dupuyloubaton} proves only mean convergence,
whereas we also control the variance and prove convergence a.s.
We use a free probability approach, which may be easier to generalize to
other models.

\subsection{Overview of proof}\label{subsec:proofsketch}
We use the tools of operator-valued free probability theory, in particular
rectangular
probability spaces and their connection to operator-valued freeness developed 
in \cite{benaychgeorges} and the free deterministic equivalents approach
of \cite{speichervargas}.

Let us write $\alpha_r$ in (\ref{eq:mixedmodel}) as
$\alpha_r=G_r\Sigma_r^{1/2}$, where $G_r \in \R^{I_r \times p}$ has
i.i.d.\ $\Nor(0,1)$ entries. Then $\hat{\Sigma}=Y^TBY$ takes the form
\[\hat{\Sigma}=\sum_{r,s=1}^k
\Sigma_r^{1/2}G_r^TU_r^TBU_sG_s\Sigma_s^{1/2}.\]
We observe the following:
If $O_0,O_1,\ldots,O_k \in \R^{p \times p}$ and $O_{k+r} \in
\R^{I_r \times I_r}$ for each $r=1,\ldots,k$ are real orthogonal matrices,
then by rotational invariance of $G_r$, $\mu_{\hat{\Sigma}}$
remains invariant in law under the transformations
\[\Sigma_r^{1/2} \mapsto H_r:=O_r^T\Sigma_r^{1/2}O_0,\;\;
U_r^TBU_s \mapsto F_{rs}:=O_{k+r}^TU_r^TBU_sO_{k+s}.\]
Hence we may equivalently consider the matrix
\begin{equation}\label{eq:equivalentSigmahat}
W=\sum_{r,s=1}^k H_r^TG_r^TF_{rs}G_sH_s
\end{equation}
for $O_0,\ldots,O_{2k}$ independent and Haar-distributed.
The families $\{F_{rs}\}$, $\{G_r\}$, $\{H_r\}$ are
independent of each other, with each family satisfying a
certain joint orthogonal invariance in law (formalized in
Section \ref{sec:freeapprox}).

Following \cite{benaychgeorges}, we embed the matrices $\{F_{rs}\}$, $\{G_r\}$,
$\{H_r\}$ into a square matrix space $\C^{N \times N}$. We then consider
deterministic elements $\{f_{rs}\}$, $\{g_r\}$, $\{h_r\}$ in a
von Neumann algebra $\A$ with tracial state $\tau$, such that
these elements model the embedded matrices, and
$\{f_{rs}\}$, $\{g_r\}$, and $\{h_r\}$ are free with amalgamation
over a diagonal sub-algebra of projections in $\A$. We follow the deterministic
equivalents approach of \cite{speichervargas} and allow $(\A,\tau)$ and
$\{f_{rs}\},\{g_r\},\{h_r\}$ to also depend on $n$ and $p$.

Our proof of Theorem \ref{thm:bulkdistribution} consists of two steps:
\begin{enumerate}[1.]
\item For independent, jointly orthogonally-invariant
families of random matrices, we formalize the notion of a free
deterministic equivalent and prove an asymptotic freeness result
establishing validity of this approximation.
\item For our specific model of interest, we show that the Stieltjes transform
of $w:=\sum_{r,s} h_r^*g_r^*f_{rs}g_sh_s$ in the free model satisfies 
the equations (\ref{eq:arecursion}--\ref{eq:m0}).
\end{enumerate}
\noindent We establish separately the existence and uniqueness of the fixed
point to (\ref{eq:arecursion}--\ref{eq:brecursion}) using a contractive
mapping argument and uniqueness of analytic continuation. This implies that
the Stieltjes transform of $w$ in step 2 is uniquely determined by
(\ref{eq:arecursion}--\ref{eq:m0}), which implies by step 1 that
(\ref{eq:arecursion}--\ref{eq:m0}) asymptotically determine
the Stieltjes transform of $W$.

An advantage of this approach is that the approximation is separated
from the computation of the approximating
measure $\mu_0$. The approximation in step 1 is
general---it may be applied to other matrix models arising in statistics
and engineering, and it follows a line of work establishing asymptotic
freeness of random matrices
\cite{voiculescuinv,dykema,voiculescustrengthened,hiaipetz,collins,
collinssnaidy,benaychgeorges,speichervargas}.
In the computation in step 2,
the Stieltjes transform of $w$ is exactly (rather than
approximately) described by (\ref{eq:arecursion}--\ref{eq:m0}).
The computation is thus entirely algebraic, using free cumulant tools of
\cite{nicaetal,speichervargas}, and it does not require analytic approximation
arguments or bounds.

\subsection{Outline of paper}
Section \ref{sec:applications} specializes Theorem \ref{thm:bulkdistribution} to
several classification designs that arise in applications.
Section \ref{sec:freeapprox} reviews
free probability theory and states the asymptotic freeness
result. Section \ref{sec:modelcomputation} performs the computation in 
the free model. The remainder of the proof and other details are
deferred to the supplementary appendices.

\subsection{Notation}
$\|\cdot\|$ denotes the $l_2$ norm for vectors and the $l_2 \to
l_2$ operator norm for matrices. $M^T$, $M^*$, and $\Tr M=\sum_i M_{ii}$
denote the transpose, conjugate-transpose, and trace of $M$.
$\Id_n$ denotes the identity matrix of size $n$. $\diag(A_1,\ldots,A_k)$
denotes the block-diagonal matrix with blocks $A_1,\ldots,A_k$.
$\C^+=\{z \in \C:\Im z>0\}$ and $\overline{\C^+}=\{z \in \C:\Im z \geq 0\}$
denote the open and closed half-planes.

For a $*$-algebra $\A$ and elements
$(a_i)_{i \in \I}$ of $\A$, $\langle a_i:i \in \I \rangle$ denotes the
sub-$*$-algebra generated by $(a_i)_{i \in \I}$. We write $\langle \{a_i\}
\rangle$ if the index set $\I$ is clear from context. If $\A$ is a von Neumann
algebra, $\langle \{a_i\} \rangle_{W^*}$ denotes the generated von Neumann
sub-algebra, i.e.\ the ultraweak closure of $\langle \{a_i\} \rangle$,
and $\|a_i\|$ denotes the $C^*$-norm.

\subsection*{Acknowledgments}
We thank Mark Blows for introducing us to this problem and for much help
in guiding us through understanding the quantitative genetics applications.

\section{Specialization to classification designs}\label{sec:applications}
The form (\ref{eq:quadraticestimator}) encompasses MANOVA estimators, which
solve for $\Sigma_1,\ldots,\Sigma_k$ in the system of equations
$Y^TM_rY=\E[Y^TM_rY]$ for a certain choice of symmetric matrices
$M_1,\ldots,M_k \in \R^{n \times n}$ \cite[Chapter 5.2]{searleetal}. From
(\ref{eq:mixedmodel}), the identity
$\E[\alpha_s^T M\alpha_s]=(\Tr M)\Sigma_s$ for any matrix $M$, and independence
of $\alpha_r$, we get
\[\E[Y^TM_rY]=\sum_{s=1}^k
\E[\alpha_s^TU_s^TM_rU_s\alpha_s]
=\sum_{s=1}^k \Tr(U_s^TM_rU_s)\Sigma_s.\]
Hence each MANOVA estimate $\hat{\Sigma}_r$ takes the form
(\ref{eq:quadraticestimator}), where
$B_r$ is a linear combination of $M_1,\ldots,M_k$.

In balanced or fully-nested classification designs, standard choices
for $M_1,\ldots,M_k$ project onto subspaces of $\R^n$ such that each
$Y^TM_rY$ corresponds to a ``sum-of-squares''. We may simplify
(\ref{eq:brecursion}) in such settings by analytically computing the matrix
inverse and block trace. We provide several examples below, 
deferring matrix algebra details and a more general procedure
for obtaining such simplifications to Appendix \ref{appendix:applications}.

For more general designs and models, $M_1,\ldots,M_k$ may be ad-hoc, although
Theorem \ref{thm:bulkdistribution} still applies to such estimators. 
The theorem also applies to MINQUEs \cite{rao,lamotte} in these
settings, which prescribe a specific form for $B \in \R^{n \times n}$ based on a
variance minimization criterion.

\subsection{One-way classification}\label{subsec:oneway}
$\{Y_{i,j} \in \R^p:1 \leq i \leq I,1 \leq j \leq J_i\}$ represent
observations of $p$ traits across $n=\sum_{i=1}^I J_i$
samples, belonging to $I$ groups of sizes $J_1,\ldots,J_I$.
The data are modeled as
\begin{equation}\label{eq:onewaymodel}
Y_{i,j}=\mu+\alpha_i+\eps_{i,j},
\end{equation}
where $\mu \in \R^p$ is a vector of population mean values,
$\alpha_i \sim \Nor(0,\Sigma_1)$ are i.i.d.\ random group effects,
and $\eps_{i,j} \sim \Nor(0,\Sigma_2)$ are i.i.d.\ residual errors.
In quantitative genetics, this is the model for the half-sib experimental
design and also for the standard twin study, where groups correspond to
half-siblings or twin pairs \cite{lynchwalsh}.

Defining the sums-of-squares
\[\SS_1=\sum_{i=1}^I J_i(\bar{Y}_i-\bar{Y})(\bar{Y}_i-\bar{Y})^T,
\qquad \SS_2=\sum_{i=1}^I \sum_{j=1}^{J_i} (Y_{i,j}-\bar{Y}_i)
(Y_{i,j}-\bar{Y}_i)^T,\]
where $\bar{Y}_i \in \R^p$ and $\bar{Y} \in \R^p$ denote the mean in the
$i^\text{th}$ group and of all samples, respectively,
the standard MANOVA estimators are given \cite[Chapter 3.6]{searleetal} by
\begin{equation}\label{eq:onewayestimators}
\hat{\Sigma}_1=\frac{1}{K}
\left(\frac{1}{I-1}\SS_1-\frac{1}{n-I}\SS_2\right),\hspace{0.1in}
\hat{\Sigma}_2=\frac{1}{n-I}\SS_2,
\end{equation}
where $K=(n-\frac{1}{n} \sum_{i=1}^I J_i^2)/(I-1)$.
The balanced case corresponds to $J_1=\ldots=J_I=K$.
Theorem \ref{thm:bulkdistribution} yields the following
corollary:
\begin{corollary}\label{cor:oneway}
Assume $p,n,I \to \infty$ such that $c<p/n<C$, $I/n>c$, $(n-I)/n>c$,
$\max_{i=1}^I J_i<C$, $\|\Sigma_1\|<C$, and $\|\Sigma_2\|<C$ for some $C,c>0$.
Denote $I_1=I$ and $I_2=n$. Then:
\begin{enumerate}[(a)]
\item For $\hat{\Sigma}_1$, $\mu_{\hat{\Sigma}_1}-\mu_0 \to 0$
weakly a.s.\ where $\mu_0$ has Stieltjes transform $m_0(z)$ determined by
\begin{align*}
a_s&=-\tfrac{1}{I_s}
\Tr\left((z\Id+b_1\Sigma_1+b_2\Sigma_2)^{-1}\Sigma_s\right)
\qquad \text{for } s=1,2,\\
b_1&=-{\textstyle \sum}_{i=1}^I \tfrac{J_i}{KI+IJ_ia_1+na_2},\quad
b_2=\tfrac{n-I}{K(n-I)-na_2}-{\textstyle \sum}_{i=1}^I
\tfrac{1}{KI+IJ_ia_1+na_2},\\
m_0(z)&=-\tfrac{1}{p}\Tr\left((z\Id+b_1\Sigma_1+b_2\Sigma_2)^{-1}
\right).
\end{align*}
\item For $\hat{\Sigma}_2$, $\mu_{\hat{\Sigma}_2}-\mu_0 \to 0$
weakly a.s.\ where $\mu_0$ has Stieltjes transform $m_0(z)$ determined by
\[a_2=-\tfrac{1}{n}
\Tr\left((z\Id+b_2\Sigma_2)^{-1}\Sigma_2\right),\qquad
b_2=-\tfrac{n-I}{n-I+na_2},\]
\[m_0(z)=-\tfrac{1}{p}\Tr\left((z\Id+b_2\Sigma_2)^{-1}\right).\]
\end{enumerate}
\end{corollary}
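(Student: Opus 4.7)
The plan is to cast model (\ref{eq:onewaymodel}) as a special case of (\ref{eq:mixedmodel}) and then evaluate the equations (\ref{eq:arecursion})--(\ref{eq:brecursion}) of Theorem \ref{thm:bulkdistribution} using the projection structure of the MANOVA estimators. Set $X=\mathbf{1}_n$, $k=2$, $I_1=I$, $I_2=n$, with $U_1$ the $n\times I$ group-membership incidence matrix and $U_2=\Id_n$, and introduce the orthogonal projections $M_1=P_{U_1}-P_{\mathbf{1}}$ (rank $I-1$) and $M_2=\Id_n-P_{U_1}$ (rank $n-I$), which satisfy $M_1M_2=0$, $M_r\mathbf{1}=0$, and $U_1^TM_2=0$. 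Then $\SS_1=Y^TM_1Y$, $\SS_2=Y^TM_2Y$, and each MANOVA estimator takes the form $\hat{\Sigma}_r=Y^TB_rY$ with $B_rX=0$, where $B_1=\tfrac{1}{K(I-1)}M_1-\tfrac{1}{K(n-I)}M_2$ and $B_2=\tfrac{1}{n-I}M_2$; the stated assumptions imply that $\|\Sigma_r\|$, $\|U_r\|$, and $n\|B_r\|$ are uniformly bounded, so Theorem \ref{thm:bulkdistribution} applies and it remains to simplify (\ref{eq:brecursion}).

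Part (b) is immediate: $U_1^TM_2=0$ forces $F=U^TB_2U$ to be supported on its $(2,2)$ block, equal to $\tfrac{n}{n-I}M_2$, so $\Id_{I_+}+FD(a)$ is block diagonal with trivial $(1,1)$ block. This gives $b_1=0$, and a spectral computation on the rank-$(n-I)$ projection $M_2$ evaluates the remaining trace to $b_2=-(n-I)/(n-I+na_2)$; substituting $b_1=0$ into (\ref{eq:arecursion}) and (\ref{eq:m0}) completes (b).

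For part (a), the blocks of $F=U^TB_1U$ are, writing $D=\diag(J_1,\ldots,J_I)$ and $\mathbf{j}=(J_1,\ldots,J_I)^T$,
\[F_{11}=\tfrac{I}{K(I-1)}\bigl(D-\tfrac{1}{n}\mathbf{j}\mathbf{j}^T\bigr),\;\;F_{12}=\tfrac{\sqrt{In}}{K(I-1)}U_1^TM_1,\;\;F_{22}=\tfrac{n}{K(I-1)}M_1-\tfrac{n}{K(n-I)}M_2,\]
with $F_{21}=F_{12}^T$, and the orthogonal decomposition $\R^n=\operatorname{span}(\mathbf{1})\oplus\operatorname{im}M_1\oplus\operatorname{im}M_2$ simultaneously diagonalizes $F_{22}$. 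Writing $M:=\Id_{I_+}+FD(a)$ and using the identity $M^{-1}F=(\Id-M^{-1})D(a)^{-1}$, the traces in (\ref{eq:brecursion}) reduce to $\Tr[M^{-1}]_{rr}$, which I would compute via the block-inverse (Schur-complement) formula. Because $M_1M_2=0$, $M_1\mathbf{1}=0$, and $M_1^2=M_1$, the Schur complement for the $(1,1)$ block collapses dramatically to $\Id_I+\eta(D-n^{-1}\mathbf{j}\mathbf{j}^T)$ with $\eta=a_1I/(K(I-1)+na_2)$, after which Sherman--Morrison yields $\Tr[M^{-1}]_{11}$ as an explicit rational function of $\sum_i J_i^k/(1+\eta J_i)^\ell$. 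A parallel argument for the $(2,2)$ block contributes one term from each of $\operatorname{im}M_1$ and $\operatorname{im}M_2$. Repackaging these expressions in terms of the denominators $KI+IJ_ia_1+na_2$ reproduces the claimed formulas for $b_1$ and $b_2$ up to $O(n^{-1})$ corrections that leave the weak limit $\mu_0$ unchanged; alternatively, the same simplified equations arise directly from the free-probability framework of Section \ref{subsec:proofsketch}, where the normalizations $(I-1)/n$ and $I/n$ coincide in the limit.

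The main obstacle is the Schur-complement bookkeeping: the simplification of $A-BD^{-1}C$ rests on the cancellation $1-a_2n/[\lambda_1K(I-1)]=K(I-1)/[K(I-1)+na_2]$, where $\lambda_1=1+a_2n/[K(I-1)]$ is the eigenvalue of $\Id+a_2F_{22}$ on $\operatorname{im}M_1$, and the subsequent rank-one Sherman--Morrison update requires careful tracking of rational expressions in $a_1,a_2,J_1,\ldots,J_I$. This algebra is routine but lengthy, and I would defer the details, together with a general procedure applicable to balanced and fully nested designs, to Appendix \ref{appendix:applications}.
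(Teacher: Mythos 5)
Your part (b) is correct and mirrors the paper's argument (noting $U_1^T M_2 = 0$ so that $F$ is supported on the $(2,2)$ block). For part (a), your overall direction --- cast the model as a special case of Theorem \ref{thm:bulkdistribution}, express the block traces in (\ref{eq:brecursion}) via the identity $M^{-1}F=(\Id-M^{-1})D(a)^{-1}$, and compute via block-inverse/Schur-complement and Sherman--Morrison identities --- is a legitimate alternative to the paper's route (which changes basis with $Q=\diag(\Id_I,[V_1 \mid V_2])$ to block-diagonalize $F$ and then applies the Woodbury identity once). Your Schur-complement collapse for the $(1,1)$ block is algebraically correct as far as you carry it.

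However, there is a genuine gap: you compute with the original matrix $B_1=\tfrac{1}{K(I-1)}M_1-\tfrac{1}{K(n-I)}M_2$, which yields denominators of the form $K(I-1)+IJ_ia_1+na_2$, whereas the corollary's stated equations use $KI+IJ_ia_1+na_2$. These are not the same, and your assertion that the discrepancy is an ``$O(n^{-1})$ correction that leaves the weak limit $\mu_0$ unchanged'' is stated but not proved. In the deterministic-equivalent framework of this paper, $\mu_0$ is an $n$-dependent measure defined by a specific fixed-point system, so you would need a stability argument showing that perturbing the map $a \mapsto b$ by a relative $O(n^{-1})$ amount produces a sequence $\mu_0'$ with $\mu_0'-\mu_0\to 0$ weakly; this is plausible but nontrivial, and it is precisely what the paper avoids. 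The paper's device is to first replace $\hat\Sigma_1=Y^TB_1Y$ with $\check\Sigma_1=Y^T\check B_1Y$, where $\check B_1=\tfrac{1}{K}\bigl(\tfrac{1}{I}(\pi_0+\pi_1)-\tfrac{1}{n-I}\pi_2\bigr)$, observe that $\hat\Sigma_1-\check\Sigma_1$ is a rank-one matrix plus a matrix of operator norm $O(1/N)$ (hence $\mu_{\hat\Sigma_1}-\mu_{\check\Sigma_1}\to 0$ a.s.\ by interlacing/norm perturbation of spectral measures), and then apply Theorem \ref{thm:bulkdistribution} directly to $\check B_1$, which produces the stated equations exactly. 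Your remark that ``the same simplified equations arise directly from the free-probability framework \ldots where $(I-1)/n$ and $I/n$ coincide in the limit'' is not a substitute, since the corollary is stated in deterministic-equivalent (finite-$n$) form and the uniqueness of the fixed point is tied to the specific coefficients. To repair your proposal, either adopt the paper's modification $B_1\mapsto\check B_1$ before invoking the theorem, or supply the missing stability lemma for the fixed-point system under $O(n^{-1})$ coefficient perturbations.
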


\noindent ``Determined by'' is in the sense of
Theorem \ref{thm:bulkdistribution},
i.e.\ for each $z \in \C^+$ there exists a unique solution to these equations
with $a_s \in \C^+ \cup \{0\}$,
$b_s \in \overline{\C^+}$, and $m_0(z) \in \C^+$. This system may
be solved by the procedure of Theorem \ref{thm:algorithm}.

Figure \ref{fig:oneway} displays the simulated spectrum of $\hat{\Sigma}_1$ in
various settings. This spectrum depends on both $\Sigma_1$ and $\Sigma_2$.
Overlaid on each histogram is the density of $\mu_0$,
approximated as $f(x)=\pi^{-1}\Im m_0(x+0.0001i)$ and computed using the
procedure of Theorem \ref{thm:algorithm}.

For $\hat{\Sigma}_2$ (but not $\hat{\Sigma}_1$), as in Remark \ref{remark:MP},
the three equations of Corollary \ref{cor:oneway}(b) may be simplified to the
single Marcenko-Pastur equation for population covariance $\Sigma_2$.
This also follows directly from the observation that $\hat{\Sigma}_2$ is equal
in law to $\eps^T\pi \eps$ where $\eps \in \R^{n \times p}$ is the matrix of
residual errors and $\pi$ is a normalized projection onto a space of
dimensionality $n-I$. This phenomenon holds generally for the MANOVA estimate
of the residual error covariance in usual classification designs.

\begin{figure}
\includegraphics[width=0.35\textwidth]{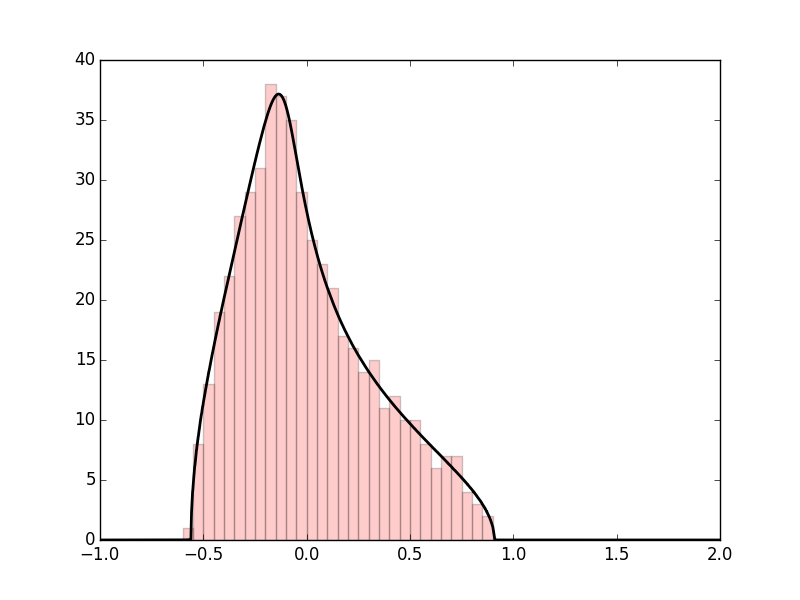}%
\includegraphics[width=0.35\textwidth]{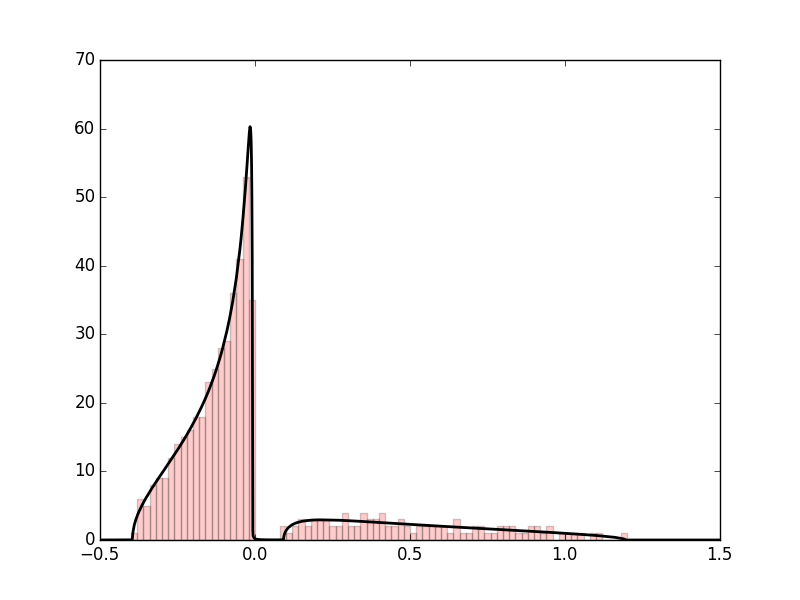}
\includegraphics[width=0.35\textwidth]{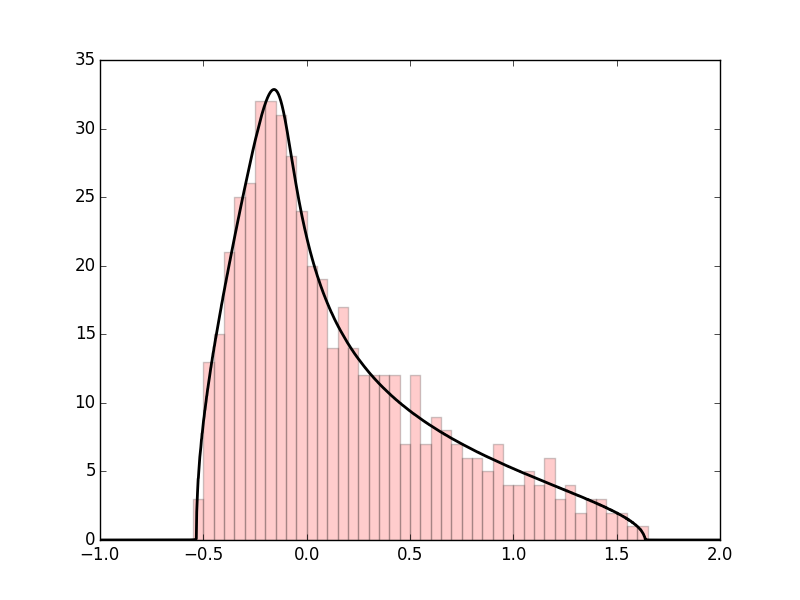}%
\includegraphics[width=0.35\textwidth]{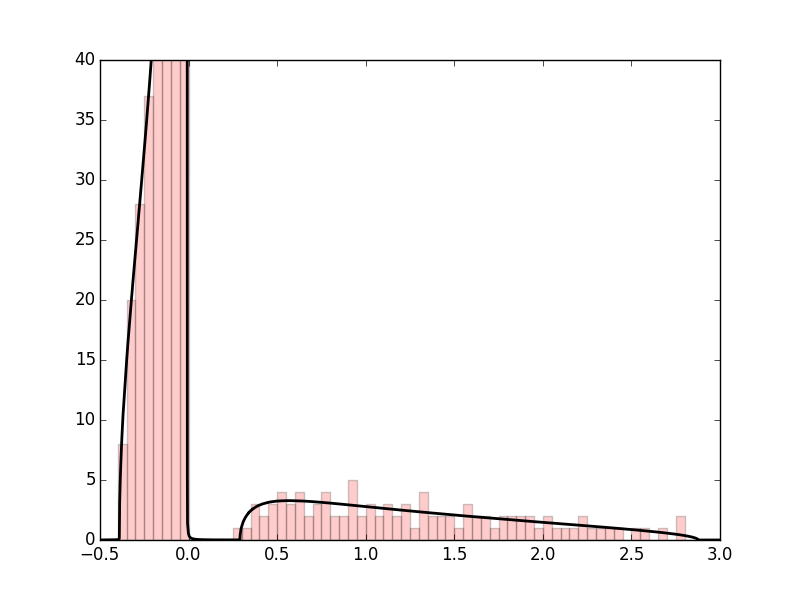}%
\caption{Simulated spectrum of $\hat{\Sigma}_1$ for the balanced
one-way classification model, $p=500$, with theoretical predictions of
Corollary \ref{cor:oneway} overlaid in black. Left: 400 groups of size 4.
Right: 100 groups of size 8. Top: $\Sigma_1=0$, $\Sigma_2=\Id$. Bottom:
$\Sigma_1$ with equally spaced eigenvalues in $[0,0.3]$, $\Sigma_2=\Id$.}
\label{fig:oneway}
\end{figure}

\subsection{Balanced nested classification}\label{subsec:nested}
$\{Y_{j_1,\ldots,j_k} \in \R^p:1 \leq j_1 \leq J_1,\,\ldots,\,
1 \leq j_k \leq J_k\}$ are
observations of $p$ traits across $n=J_1J_2\ldots J_k$
samples. The samples are divided into $J_1 \geq 2$ groups of equal
size $J_2\ldots J_k$, the samples within each group are further divided
into $J_2 \geq 2$ subgroups of equal size $J_3 \ldots J_k$, etc., and
there are $J_k \geq 2$ samples in each subgroup at the finest level of division.
The data are modeled as
\begin{equation}\label{eq:balancednestedmodel}
Y_{j_1,\ldots,j_k}=\mu+\alpha^{(1)}_{j_1}+\alpha^{(2)}_{j_1,j_2}
+\ldots+\alpha^{(k-1)}_{j_1,\ldots,j_{k-1}}+\eps_{j_1,\ldots,j_k},
\end{equation}
where $\mu \in \R^p$ is the population mean,
$\alpha^{(r)}_{j_1,\ldots,j_r} \sim \Nor(0,\Sigma_r)$ are i.i.d.\ group
effects for the $r^\text{th}$ level of grouping, and
$\eps_{j_1,\ldots,j_k} \sim \Nor(0,\Sigma_k)$ are i.i.d.\ residual
errors. The case $k=2$ is the one-way classification model of
Section \ref{subsec:oneway} when the design is balanced.
The two-way model ($k=3$) is the model for the full-sib half-sib design
in which outer groups correspond
to half-siblings and inner groups to full siblings. It is also the model for
the monozygotic-twin half-sib design, in which outer groups
correspond to offspring of one of two twins, and inner groups to offspring of
one twin in the pair \cite{lynchwalsh}.

Sums-of-squares and MANOVA estimators $\hat{\Sigma}_r$ for $\Sigma_r$
are defined analogously to the one-way model of Section
\ref{subsec:oneway}; we review these definitions in
Appendix \ref{appendix:applications}. Theorem \ref{thm:bulkdistribution} yields
the following corollary for these estimators:
\begin{corollary}\label{cor:nested}
Fix $J_2,\ldots,J_k \geq 2$, let $n=J_1J_2\ldots J_k$, and assume
$p,n,J_1 \to \infty$ such that $c<p/n<C$
and $\|\Sigma_r\|<C$ for all $r=1,\ldots,k$ and some $C,c>0$.
Then for any $r \in \{1,\ldots,k\}$,
$\mu_{\hat{\Sigma}_r}-\mu_0 \to 0$ weakly a.s.\ where $\mu_0$ has Stieltjes
transform $m_0(z)$ determined by
\begin{align*}
a_s&=-\tfrac{1}{J_1\ldots J_s}\Tr\left(
(z\Id+b_r\Sigma_r+\ldots+b_k\Sigma_k)^{-1}\Sigma_s\right)
\quad \text{for }s=r,\ldots,k,\\
b_s&=\begin{cases} -\frac{J_r-1}{J_r-1+J_r\sum_{j=r}^k a_j} & \text{if } s=r,\\
-\frac{1}{J_{r+1}\ldots J_s}\left(\frac{J_r-1}
{J_r-1+J_r\sum_{j=r}^k a_j}
-\frac{J_{r+1}-1}{J_{r+1}-1-\sum_{j=r+1}^k a_j}\right) & \text{if } s \geq r+1,
\end{cases}\\
m_0(z)&=-\tfrac{1}{p}\Tr\left((z\Id+b_r\Sigma_r+\ldots+b_k\Sigma_k)^{-1}\right).
\end{align*}
\end{corollary}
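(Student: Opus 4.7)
The plan is to specialize Theorem \ref{thm:bulkdistribution} to the balanced nested MANOVA estimator, and to analytically carry out the block inverse and block trace appearing in (\ref{eq:brecursion}) by exploiting the Kronecker product structure of the design.

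I would first identify $\R^n$ with $\R^{J_1}\otimes\cdots\otimes\R^{J_k}$, so that the projection onto level-$r$ group means factors as $P_r=\Id_{J_1}\otimes\cdots\otimes\Id_{J_r}\otimes\rho_{r+1}\otimes\cdots\otimes\rho_k$ with $\rho_i=J_i^{-1}\mathbf{1}_{J_i}\mathbf{1}_{J_i}^T$, and the incidence matrix $U_s$ factors as $\Id_{J_1}\otimes\cdots\otimes\Id_{J_s}\otimes\mathbf{1}_{J_{s+1}}\otimes\cdots\otimes\mathbf{1}_{J_k}$. Writing the nested sums-of-squares as $\SS_s=Y^T(P_s-P_{s-1})Y$ and inverting the MANOVA linear system $\E[\SS_s]=(I_s-I_{s-1})\sum_{t\geq s}(n/I_t)\Sigma_t$ yields
\[B_r=\frac{J_r}{n(J_r-1)}(P_r-P_{r-1})-\frac{1}{n(J_{r+1}-1)}(P_{r+1}-P_r),\]
with the second summand absent when $r=k$. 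Since $(P_t-P_{t-1})U_s=0$ whenever $s\leq t-1$, we get $U_s^T B_r U_{s'}=0$ whenever $\min(s,s')<r$, which forces $a_s=b_s=0$ for $s<r$ in (\ref{eq:arecursion})--(\ref{eq:brecursion}) and matches the fact that only indices $s\geq r$ appear in the statement.

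For $s,s'\geq r$, a direct Kronecker product calculation expresses each block $F_{ss'}=\sqrt{I_sI_{s'}}\,U_s^T B_r U_{s'}$ as a sum of two contributions whose ranges are mutually orthogonal sub-spaces of $\C^{I_s}$: the $(P_r-P_{r-1})$-contribution lies in the image of $\pi_r\otimes\rho_{r+1}\otimes\cdots\otimes\rho_k$ (where $\pi_r=\Id_{J_r}-\rho_r$, of dimension $I_{r-1}(J_r-1)$ in each block), while the $(P_{r+1}-P_r)$-contribution, present only when $s,s'\geq r+1$, lies in the image of $\Id_{J_r}\otimes\pi_{r+1}\otimes\rho_{r+2}\otimes\cdots\otimes\rho_k$ (of dimension $I_r(J_{r+1}-1)$); orthogonality is forced by $\rho_{r+1}\pi_{r+1}=0$. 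Moreover, the $\sqrt{I_s}$ scaling built into the definition of $U$ is exactly tuned so that on each of these two sub-spaces every block $F_{ss'}$ collapses to the same scalar, $J_r/(J_r-1)$ on the first and $-1/(J_{r+1}-1)$ on the second, independently of $s$ and $s'$.

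Because $D(a)$ commutes with these Kronecker projections, $\Id_{I_+}+FD(a)$ can be inverted on each sub-space separately. On each one, $F$ is a rank-one matrix of the form $c\,\mathbf{1}\mathbf{1}^T$ in block-scalar indices, so the Sherman--Morrison identity immediately gives $(\Id_{I_+}+FD(a))^{-1}F=[J_r/(J_r-1+J_r\sum_{j\geq r}a_j)]\,\mathbf{1}\mathbf{1}^T$ on the first sub-space and $[-1/(J_{r+1}-1-\sum_{j\geq r+1}a_j)]\,\mathbf{1}\mathbf{1}^T$ on the second. Reading off $\Tr_s$ by multiplying each scalar value by the corresponding sub-space dimension in the $s$-th block and dividing by $I_s$, together with the identity $I_{r-1}J_r/I_s=I_r/I_s=1/(J_{r+1}\cdots J_s)$, produces exactly the asserted formulas for $b_s$, while (\ref{eq:arecursion}) and (\ref{eq:m0}) are immediate specializations of the Theorem \ref{thm:bulkdistribution} formulas once $b_1=\cdots=b_{r-1}=0$ is enforced. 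The main obstacle will be the linear-algebra bookkeeping in the preceding paragraph -- carefully computing each $F_{ss'}$ via Kronecker expansion, tracking the $\sqrt{I_s}$ normalizations so that each block restriction is indeed a single scalar, and identifying the correct sub-space dimensions -- after which the inversion and trace reduce to a one-line Sherman--Morrison calculation; the full verification is deferred to Appendix \ref{appendix:applications}, and the one-way balanced case in Corollary \ref{cor:oneway} already illustrates the scheme in the simplest setting $k=2$.
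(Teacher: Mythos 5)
Your proposal mirrors the paper's own proof in Appendix~\ref{appendix:applications}: specialize Theorem~\ref{thm:bulkdistribution}, observe that $B_r$ is a two-term linear combination of the orthogonal projections $\pi_r$ and $\pi_{r+1}$, reduce $(\Id_{I_+}+FD(a))^{-1}F$ to rank-one Sherman--Morrison/Woodbury inversions on the two invariant subspaces those projections carve out, and read off block traces scaled by subspace dimensions. The paper packages this through a general balanced-model framework (a partial order on the subspaces $S_r$, M\"obius inversion for the MANOVA coefficients $\beta_{tu}$, and a change-of-basis permutation $O=QP$ that block-diagonalizes $F$), whereas you exploit the explicit Kronecker factorization of $P_r$ and $U_s$ directly; the core linear-algebra step and the resulting formulas are identical.

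One small imprecision: $\min(s,s')<r$ does give $F_{ss'}=0$ and hence $b_s=0$ for $s<r$, but it does \emph{not} force $a_s=0$ for $s<r$---equation~\eqref{eq:arecursion} still yields a generically nonzero value $a_s=-I_s^{-1}\Tr\big((z\Id+b\cdot\Sigma)^{-1}\Sigma_s\big)$. What saves the argument is that these $a_s$ simply never enter \eqref{eq:brecursion} (because $F$ vanishes on those blocks) or \eqref{eq:m0}, so they are irrelevant rather than zero; this is exactly the observation the paper makes at the end of its nested-case derivation, and it does not affect the stated formulas.
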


\subsection{Replicated crossed two-way classification}\label{subsec:crossed}
$\{Y_{i,j,k,l} \in \R^p:1 \leq i \leq I,1 \leq j \leq J, 1 \leq k \leq K,
1 \leq l \leq L\}$ represent observations across $n=IJKL$ samples.
The samples belong to $I$ replicated experiments of a $J \times K$ crossed
design with fixed numbers $J$ and $K$ of levels for two factors, and with
$L$ samples in each replicate $i$ corresponding to each level cross
$j \times k$. The data are modeled as
\begin{equation}\label{eq:crossedtwowaymodel}
Y_{i,j,k,l}=\mu+\alpha_i+\beta_{i,j}+\gamma_{i,k}+\delta_{i,j,k}
+\eps_{i,j,k,l},
\end{equation}
where $\mu \in \R^p$ is the population mean, $\alpha_i
\sim \Nor(0,\Sigma_1)$ are replicate effects,
$\beta_{i,j} \sim \Nor(0,\Sigma_2)$ and $\gamma_{i,k}
\sim \Nor(0,\Sigma_3)$ are effects for the two factors,
$\delta_{i,j,k} \sim \Nor(0,\Sigma_4)$ are
effects for the factor interactions, and
$\eps_{i,j,k,l} \sim \Nor(0,\Sigma_5)$ are residual errors.
This crossed $J \times K$ design corresponds to the
Comstock-Robinson model or North Carolina Design II commonly used in plant
studies, in which each of $J$ males is mated to each of
$K$ females. We consider the replicated setting with small $J,K,L$ and large
$I$, as is often done in practice for reasons of experimental design
\cite{lynchwalsh}.

Definitions of MANOVA estimators $\hat{\Sigma}_1,\ldots,\hat{\Sigma}_5$
are reviewed in Appendix \ref{appendix:applications}.
Theorem \ref{thm:bulkdistribution} yields the following for, e.g.,
the factor effect estimate $\hat{\Sigma}_2$.
\begin{corollary}\label{cor:crossedtwoway}
Fix $J,K,L \geq 2$, let $n=IJKL$, and assume $p,n,I \to \infty$ such that
$c<p/n<C$ and $\|\Sigma_r\| \leq C$ for each $r=1,\ldots,5$ and some $C,c>0$.
Denote $I_2=IJ$, $I_4=IJK$, and $I_5=n$. Then
$\mu_{\hat{\Sigma}_2}-\mu_0 \to 0$ weakly
a.s.\ where $\mu_0$ has Stieltjes transform $m_0(z)$ determined by
\begin{align*}
a_s&=-\tfrac{1}{I_s}\Tr\left((z\Id+b_2\Sigma_2
+b_4\Sigma_4+b_5\Sigma_5)^{-1}
\Sigma_s\right) \qquad \text{for }s=2,4,5,\\
b_2&=-\tfrac{J-1}{J-1+J(a_2+a_4+a_5)},\\
b_4&=-\tfrac{1}{K}\left(\tfrac{J-1}{J-1+J(a_2+a_4+a_5)}
-\tfrac{(J-1)(K-1)}{(J-1)(K-1)-J(a_4+a_5)}\right),\\
b_5&=\tfrac{1}{L}b_4,\\
m_0(z)&=-\tfrac{1}{p}\Tr\left((z\Id+b_2\Sigma_2
+b_4\Sigma_4+b_5\Sigma_5)^{-1}\right).
\end{align*}
\end{corollary}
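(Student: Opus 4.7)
The plan is to deduce Corollary \ref{cor:crossedtwoway} as a direct specialization of Theorem \ref{thm:bulkdistribution} to the crossed two-way model (\ref{eq:crossedtwowaymodel}) with $k = 5$ random effects. Taking $B = B_2$ to be the symmetric matrix for which $\hat{\Sigma}_2 = Y^T B_2 Y$, one can express $B_2$ via standard MANOVA theory as a linear combination of the sum-of-squares projections $M_\beta, M_\delta, M_\epsilon$ chosen so that $\E[\hat{\Sigma}_2] = \Sigma_2$. Each of these projections, as well as each incidence matrix $U_r$, is a tensor product with one factor per design index, each factor being either an identity $\Id_{J_r}$ or a rank-one averaging matrix $J_r^{-1}\e_{J_r}\e_{J_r}^T$. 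The goal is then to compute $F = U^T B_2 U$, $D(a)$, and $[\Id_{I_+} + F D(a)]^{-1} F$ in closed form so that the equations (\ref{eq:arecursion})--(\ref{eq:brecursion}) reduce to the stated formulas.

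The first simplification I would exploit is that each of $M_\beta, M_\delta, M_\epsilon$ is orthogonal to the column spaces of $U_1$ (replicate) and $U_3$ (the $\gamma$ factor), since each projects onto zero-mean functions in a direction over which $U_1$ and $U_3$ are constant. Hence $B_2 U_1 = 0$ and $B_2 U_3 = 0$, forcing the first and third block rows and columns of $F$ to vanish. Equation (\ref{eq:brecursion}) then immediately yields $b_1 = b_3 = 0$, after which $a_1$ and $a_3$ decouple from the remaining system, matching the fact that only $a_2, a_4, a_5$ and $b_2, b_4, b_5$ appear in the corollary. For the nontrivial blocks $F_{rs}$ with $r, s \in \{2, 4, 5\}$, the tensor structure makes each $F_{rs}$ a tensor product of commuting identity and rank-one factors, so $\Id_{I_+} + F D(a)$ can be inverted coordinate-by-coordinate using the Sherman--Morrison identity. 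The block traces in (\ref{eq:brecursion}) then reduce to a scalar rational computation that yields the stated closed forms for $b_2, b_4, b_5$. The equations for $a_s$ and $m_0(z)$ are immediate from (\ref{eq:arecursion}) and (\ref{eq:m0}) after substituting $b_1 = b_3 = 0$, and uniqueness of the fixed point together with convergence of the iteration are inherited from Theorems \ref{thm:bulkdistribution} and \ref{thm:algorithm}.

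The main obstacle is not conceptual but combinatorial: accurately bookkeeping the tensor-product decomposition of $F$, carrying out the coordinate-wise block inversion, and verifying that the resulting rational expressions match the stated formulas exactly, including the specific powers of $K$ and $L$ that appear in the $b_4$ and $b_5$ identities. The paper handles this uniformly across classification designs by means of a general procedure in Appendix \ref{appendix:applications}, of which Corollary \ref{cor:crossedtwoway} is the instantiation at the replicated crossed two-way design.
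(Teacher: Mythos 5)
Your proposal is correct and follows the same route as the paper's treatment in Appendix~\ref{appendix:applications}: exploit $B_2U_1 = B_2U_3 = 0$ to zero out blocks $1,3$ of $F$ (hence $b_1 = b_3 = 0$ and decoupling of $a_1,a_3$), then use the tensor structure of the remaining projections to block-diagonalize $F$ via an orthogonal change of basis, and apply the Woodbury/Sherman--Morrison identity block by block to obtain the stated rational expressions in (\ref{eq:brecursion}). One small imprecision: because the M\"obius function satisfies $\mu(2,5)=0$, the estimator $\hat{\Sigma}_2$ is built from $\pi_2$ and $\pi_4$ only, so listing $M_\epsilon$ (i.e.\ $\pi_5$) among the constituents of $B_2$ is not right---it happens to be harmless for your orthogonality argument since $\pi_5U_1=\pi_5U_3=0$ too, but a nonzero $\pi_5$-coefficient would alter the $b_5$ equation, so this deserves care when the block traces are actually carried out.
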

Appendix \ref{appendix:applications} discusses how to obtain analogous
results for $\hat{\Sigma}_1,\hat{\Sigma}_3,\hat{\Sigma}_4,\hat{\Sigma}_5$.

\section{Operator-valued free probability}\label{sec:freeapprox}
\subsection{Background}
We review definitions from operator-valued free probability theory and
its application to rectangular random matrices, drawn from
\cite{voiculescubook,voiculescu,benaychgeorges}.

\begin{definition*}\label{def:noncommutativespace}
A {\bf non-commutative probability space} $(\A,\tau)$ is a unital
$*$-algebra $\A$ over $\C$ and a $*$-linear functional $\tau:\A \to \C$ called
the {\bf trace} that satisfies, for all $a,b \in \A$ and for $1_\A \in \A$ the
multiplicative unit,
\[\tau(1_\A)=1,\;\;\tau(ab)=\tau(ba).\]
\end{definition*}

In this paper, $\A$ will always be a von Neumann algebra having norm
$\|\cdot\|$, and $\tau$ a positive, faithful, and normal trace.
(These definitions are reviewed in
Appendix \ref{appendix:freeprobdetails}.) In particular,
$\tau$ will be norm-continuous with $|\tau(a)| \leq \|a\|$.

Following \cite{benaychgeorges}, we embed rectangular matrices
into a larger square space according to the following structure.
\begin{definition*}\label{def:rectangularspace}
Let $(\A,\tau)$ be a non-commutative probability space and $d \geq 1$ a positive
integer. For $p_1,\ldots,p_d \in \A$, 
$(\A,\tau,p_1,\ldots,p_d)$ is a {\bf rectangular probability space} if
$p_1,\ldots,p_d$ are non-zero pairwise-orthogonal projections
summing to 1, i.e.\ for all $r \neq s \in \{1,\ldots,d\}$,
\[p_r \neq 0,\;\;p_r=p_r^*=p_r^2,\;\;p_rp_s=0,\;\;
p_1+\ldots+p_d=1.\]
An element $a \in \A$ is {\bf simple} if $p_rap_s=a$ for some $r,s \in
\{1,\ldots,d\}$ (possibly $r=s$).
\end{definition*}

\begin{example}\label{ex:matrixspace}
Let $N_1,\ldots,N_d \geq 1$ be positive integers and denote $N=N_1+\ldots+N_d$.
Consider the $*$-algebra $\A=\C^{N \times N}$, with the involution $*$ 
given by the conjugate transpose map $A \mapsto A^*$.
For $A \in \C^{N \times N}$,
let $\tau(A)=N^{-1}\Tr A$. Then $(\A,\tau)=(\C^{N \times N},N^{-1} \Tr)$ is a
non-commutative probability space. Any $A \in \C^{N \times N}$ may be written
in block form as
\[A=\begin{pmatrix}
A_{11} & A_{12} & \cdots & A_{1d} \\
A_{21} & A_{22} & \cdots & A_{2d} \\
\vdots & \vdots & \ddots & \vdots \\
A_{d1} & A_{d2} & \cdots & A_{dd}
\end{pmatrix},\]
where $A_{st} \in \C^{N_s \times N_t}$. For each $r=1,\ldots,d$, denote
by $P_r$ the matrix with $(r,r)$ block equal to $\Id_{N_r}$ and
$(s,t)$ block equal to 0 for all other $s,t$. Then $P_r$ is a projection, and
$(\C^{N \times N},N^{-1}\Tr,P_1,\ldots,P_d)$
is a rectangular probability space. $A \in \C^{N \times N}$
is simple if $A_{st} \neq 0$ for at most one block $(s,t)$.
\end{example}

In a rectangular probability space, the projections $p_1,\ldots,p_d$ generate
a sub-$*$-algebra
\begin{equation}\label{eq:D}
\D:=\langle p_1,\ldots,p_d \rangle=\left\{\sum_{r=1}^d z_rp_r:z_r \in
\C\right\}.
\end{equation}
We may define a $*$-linear map $\F^\D:\A \to \D$ by
\begin{equation}\label{eq:FD}
\F^\D(a)=\sum_{r=1}^d p_r\tau_r(a),\qquad
\tau_r(a)=\tau(p_rap_r)/\tau(p_r),
\end{equation}
which is a projection onto $\D$ in the sense $\F^\D(d)=d$ for all $d \in \D$.
In Example \ref{ex:matrixspace}, $\D$ consists of matrices $A \in \C^{N \times
N}$ for which $A_{rr}$ is a multiple of the identity for each $r$ and
$A_{rs}=0$ for each $r \neq s$. In this example,
$\tau_r(A)=N_r^{-1}\Tr_r A$ where $\Tr_r A=\Tr A_{rr}$, so $\F^\D$ encodes the
trace of each diagonal block.

The tuple
$(\A,\D,\F^\D)$ is an example of the following definition for an
operator-valued probability space.
\begin{definition*}\label{def:Bspace}
A {\bf $\B$-valued probability space} $(\A,\B,\F^\B)$ is a $*$-algebra $\A$, a
sub-$*$-algebra $\B \subseteq \A$ containing $1_\A$, and a $*$-linear
map $\F^\B:\A \to \B$ called the {\bf conditional expectation} satisfying, for
all $b,b' \in \B$ and $a \in \A$,
\[\F^\B(bab')=b\F^\B(a)b',\;\;\F^\B(b)=b.\]
\end{definition*}

We identify $\C \subset \A$
as a sub-algebra via the inclusion map $z \mapsto z1_\A$, and we write 
1 for $1_\A$ and $z$ for $z1_\A$. Then a non-commutative probability space
$(\A,\tau)$ is also a $\C$-valued probability space with $\B=\C$ and
$\F^\B=\tau$. 
\begin{definition*}\label{def:invariant}
Let $(\A,\tau)$ be a non-commutative probability space and $\F^\B:\A \to \B$ a
conditional expectation onto a sub-algebra $\B \subset \A$.
$\F^\B$ is {\bf $\pmb{\tau}$-invariant} if $\tau \circ \F^B=\tau$.
\end{definition*}

It is verified that $\F^\D:\A \to \D$ defined by
(\ref{eq:FD}) is $\tau$-invariant. If $\B$ is a von Neumann
sub-algebra of (a von Neumann algebra) $\A$ and $\tau$ is a positive, faithful,
and normal trace, then there exists a unique $\tau$-invariant conditional
expectation $\F^\B:\A \to \B$, which is norm-continuous and satisfies
$\|\F^\B(a)\| \leq \|a\|$ (see \cite[Theorem 7 and Proposition 1]{kadison}).
If $\D \subseteq \B \subseteq \A$ are nested von Neumann
sub-algebras with $\tau$-invariant conditional expectations $\F^\D:\A \to \D$,
$\F^\B:\A \to \B$, then we have the analogue of the classical tower property,
\begin{equation}\label{eq:tower}
\F^\D=\F^\D \circ \F^\B.
\end{equation}
We note that $\D$ in (\ref{eq:D}) is a von Neumann sub-algebra of
$\A$, as it is finite-dimensional. 

In the space $(\A,\tau)$, $a \in \A$ may be thought
of as an analogue of a bounded random variable, $\tau(a)$ its expectation, and
$\F^\B(a)$ its conditional expectation with respect to a sub-sigma-field. The 
following definitions then provide an analogue of the conditional distribution
of $a$, and more generally of the conditional joint distribution of a collection
$(a_i)_{i \in \I}$.

\begin{definition*}
Let $\B$ be a $*$-algebra and $\I$ be any set. A {\bf $*$-monomial} in the
variables $\{x_i:i \in \I\}$ with coefficients in $\B$ is an expression of the
form $b_1y_1b_2y_2\ldots b_{l-1}y_{l-1}b_l$
where $l \geq 1$, $b_1,\ldots,b_l \in \B$, and $y_1,\ldots,y_{l-1} \in
\{x_i,x_i^*:i \in \I\}$. A {\bf $*$-polynomial} in $\{x_i:i \in \I\}$ with
coefficients in $\B$ is any finite sum of such monomials.
\end{definition*}

We write $Q(a_i:i \in \I)$ as the evaluation of a $*$-polynomial $Q$ at
$x_i=a_i$.

\begin{definition*}\label{def:Blaw}
Let $(\A,\B,\F^\B)$ be a $\B$-valued probability space,
let $(a_i)_{i \in \I}$ be elements of $\A$, and let $\mathcal{Q}$ denote the
set of all $*$-polynomials in variables $\{x_i:i \in \I\}$ with coefficients
in $\B$. The (joint) {\bf $\B$-law} of
$(a_i)_{i \in \I}$ is the collection of values in $\B$
\begin{equation}
\big\{\F^\B\big[Q(a_i:i \in I)\big]\big\}_{Q \in \mathcal{Q}}.
\label{eq:Bmoments}
\end{equation}
\end{definition*}

In the scalar setting where $\B=\C$ and $\F^\B=\tau$, a $*$-monomial takes the
simpler form $zy_1y_2\ldots y_{l-1}$ for $z \in \C$ and $y_1,\ldots,y_{l-1} \in
\{x_i,x_i^*: i \in \I\}$ (because $\C$ commutes with $\A$). Then the collection
of values (\ref{eq:Bmoments}) is determined by the scalar-valued moments
$\tau(w)$ for all words $w$ in the letters $\{x_i,x_i^*:i \in \I\}$. This is
the analogue of the unconditional joint distribution of a family of bounded
random variables, as specified by the joint moments.

Finally, the following definition of operator-valued freeness, introduced
in {\cite{voiculescu}, has similarities to the notion of conditional
independence of sub-sigma-fields in the classical setting.
\begin{definition*}\label{def:free}
Let $(\A,\B,\F^\B)$ be a $\B$-valued probability space and $(\A_i)_{i \in \I}$
a collection of sub-$*$-algebras of $\A$ which contain $\B$. $(\A_i)_{i \in \I}$
are {\bf $\pmb{\B}$-free}, or free with amalgamation over $\B$, if for all
$m \geq 1$, for all
$i_1,\ldots,i_m \in \I$ with $i_1 \neq i_2$, $i_2 \neq i_3$, $\ldots$, $i_{m-1} 
\neq i_m$, and for all $a_1 \in \A_{i_1},\ldots,a_m \in
\A_{i_m}$, the following implication holds:
\[\F^\B(a_1)=\F^\B(a_2)=\ldots=\F^\B(a_m)=0 \Rightarrow
\F^\B(a_1a_2\ldots a_m)=0.\]
Subsets $(S_i)_{i \in \I}$ of $\A$ are $\B$-free
if the sub-$*$-algebras $(\langle S_i,\B\rangle)_{i \in \I}$ are.
\end{definition*}

In the classical setting, the joint law of (conditionally) independent random
variables is determined by their marginal (conditional) laws. A similar
statement holds for freeness:
\begin{proposition}\label{prop:jointlawdetermined}
Suppose $(\A,\B,\F^\B)$ is a $\B$-valued probability space, and subsets
$(S_i)_{i \in \I}$ of $\A$ are $\B$-free. Then the $\B$-law of
$\bigcup_{i \in \I} S_i$ is determined by the individual
$\B$-laws of the $S_i$'s.
\end{proposition}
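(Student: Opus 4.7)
The plan is to unravel $\F^\B[Q(a : a \in \bigcup_i S_i)]$ for an arbitrary $\B$-valued $*$-polynomial $Q$ into an algebraic expression in values $\F^\B[P(a : a \in S_i)]$ for $\B$-valued $*$-polynomials $P$ and individual indices $i$. Set $\A_i := \langle S_i, \B \rangle$, the sub-$*$-algebras appearing in the definition of $\B$-freeness. By $\C$-linearity it suffices to treat a single $*$-monomial $b_0 y_1 b_1 y_2 b_2 \cdots y_l b_l$ with $b_k \in \B$ and $y_k \in S_{i_k} \cup S_{i_k}^*$. I would first absorb each interior $b_k$ into the adjacent $y_k$ using $\B \subseteq \A_{i_k}$, rewriting the monomial as $b_0 (c_1 c_2 \cdots c_m) b_l'$ with $c_k \in \A_{i_k}$, and then merge any consecutive same-index factors until the index sequence alternates: $i_1 \neq i_2 \neq \cdots \neq i_m$. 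The bimodule property of $\F^\B$ then reduces the problem to evaluating $\F^\B(c_1 \cdots c_m)$ on such alternating products.

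I would then argue by strong induction on $m$. For $m = 1$, the element $c_1 \in \A_{i_1}$ is by construction a $\B$-valued $*$-polynomial in $S_{i_1}$, so $\F^\B(c_1)$ is an entry of the $\B$-law of $S_{i_1}$. For $m \geq 2$, I would apply the standard centering trick: write each $c_k = c_k^\circ + \F^\B(c_k)$ with $c_k^\circ := c_k - \F^\B(c_k) \in \A_{i_k}$ satisfying $\F^\B(c_k^\circ) = 0$, and expand
\begin{equation*}
c_1 c_2 \cdots c_m = \sum_{J \subseteq \{1,\ldots,m\}} \prod_{k=1}^m x_k^{(J)}, \qquad x_k^{(J)} = \begin{cases} c_k^\circ & \text{if } k \in J, \\ \F^\B(c_k) & \text{if } k \notin J. \end{cases}
\end{equation*}
For $J = \{1,\ldots,m\}$, the term $\F^\B(c_1^\circ c_2^\circ \cdots c_m^\circ)$ vanishes by the definition of $\B$-freeness applied to these centered elements in alternating sub-algebras. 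For every other $J$, the product contains at least one $\B$-valued factor: if $J = \emptyset$, the product already lies in $\B$ and $\F^\B$ acts as the identity; otherwise, each $\B$-factor $\F^\B(c_k)$ can be absorbed into an adjacent $\A$-factor via the bimodule structure $\B \subseteq \A_{i_{k \pm 1}}$, and any resulting same-index adjacencies can be merged. This produces a strictly shorter alternating product, whose $\F^\B$-value is determined by the inductive hypothesis together with the base case.

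The main work is the combinatorial bookkeeping in the absorption-and-merge step: for every proper subset $J \subseteq \{1,\ldots,m\}$ one must check that the rewriting yields an alternating product of length strictly less than $m$, so that the induction is well-founded. This is immediate, since each absorbed $\B$-factor removes one position from the product and each same-index merge shortens it further. Combining the preliminary reduction with this induction expresses $\F^\B[Q(a : a \in \bigcup_i S_i)]$ as a universal polynomial combination of entries drawn from the individual $\B$-laws of the $S_i$'s, which is precisely the assertion of the proposition.
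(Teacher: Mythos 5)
Your proof is correct. The paper itself gives no argument here---it simply cites \cite[Proposition~1.3]{voiculescu}---and your reduction to alternating products, followed by the centering expansion $c_k = c_k^\circ + \F^\B(c_k)$ and strong induction on the alternating length, is precisely the standard argument one finds in that reference and elsewhere in the operator-valued free probability literature. The only thing I would tighten is the closing phrase ``universal polynomial combination'': because the absorbed $\B$-factors $\F^\B(c_k)$ become \emph{coefficients} of the shorter polynomial fed back into the recursion, the final dependence on the individual $\B$-laws is a nested (compositional) one rather than a polynomial one; the determinacy conclusion is unaffected, but the description is slightly imprecise.
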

\begin{proof}
See \cite[Proposition 1.3]{voiculescu}.
\end{proof}

\subsection{Free deterministic equivalents and asymptotic freeness}
\label{sec:momentapprox}
Free deterministic equivalents were introduced in
\cite{speichervargas}. Here, we formalize a bit this definition for independent
jointly orthogonally-invariant families of matrices, and we establish closeness
of the random matrices and the free approximation in a general setting.

\begin{definition}\label{def:equalDlaw}
For fixed $d \geq 1$, consider two sequences of $N$-dependent
rectangular probability spaces $(\A,\tau,p_1,\ldots,p_d)$ and
$(\A',\tau',p_1',\ldots,p_d')$
such that for each $r \in \{1,\ldots,d\}$, as $N \to \infty$,
\[|\tau(p_r)-\tau'(p_r')| \to 0.\]
For a common index set $\I$, consider elements
$(a_i)_{i \in \I}$ of $\A$ and $(a_i')_{i \in
\I}$ of $\A'$. Then $(a_i)_{i \in \I}$ and $(a_i')_{i \in \I}$ are
{\bf asymptotically equal in $\pmb{\D}$-law} if the following holds:
For any $r \in \{1,\ldots,d\}$ and any $*$-polynomial $Q$ in the
variables $\{x_i:i \in \I\}$
with coefficients in $\D=\langle p_1,\ldots,p_d \rangle$, denoting by
$Q'$ the corresponding $*$-polynomial with coefficients
in $\D'=\langle p_1,\ldots,p_d\rangle$, as $N \to \infty$,
\begin{equation}\label{eq:freeequivalent}
\big|\tau_r\big[Q(a_i:i \in \I)\big]-\tau_r'
\big[Q'(a_i':i \in \I)\big]\big| \to 0.
\end{equation}
If $(a_i)_{i \in \I}$ and/or $(a_i')_{i \in \I}$ are random elements of
$\A$ and/or $\A'$, then they are {\bf asymptotically equal in $\pmb{\D}$-law
a.s.} if the above holds almost surely for each individual $*$-polynomial $Q$.
\end{definition}

In the above, $\tau_r$ and $\tau_r'$ are defined by (\ref{eq:FD}).
``Corresponding'' means that $Q'$ is obtained by expressing each
coefficient $d \in \D$ of 
$Q$ in the form (\ref{eq:D}) and replacing $p_1,\ldots,p_d$ by
$p_1',\ldots,p_d'$. 

We will apply Definition \ref{def:equalDlaw} by taking one of the two
rectangular spaces to be $(\C^{N \times N},N^{-1}\Tr)$ as in Example
\ref{ex:matrixspace}, containing random elements, and the other to be an
approximating deterministic model. (We will use ``distribution'' for
random matrices to mean their distribution as random elements of $\C^{N
\times N}$ in the usual sense, reserving the term ``$\B$-law''
for Definition \ref{def:Blaw}.) Freeness relations in
the deterministic model will emerge from the following notion of rotational
invariance of the random matrices.

\begin{definition}\label{def:orthogonalinvariance}
Consider $(\C^{N \times N},N^{-1}\Tr,P_1,\ldots,P_d)$ as in Example
\ref{ex:matrixspace}. A family of random matrices
$(H_i)_{i \in \I}$ in $\C^{N \times N}$ is {\bf block-orthogonally invariant}
if, for any orthogonal matrices
$O_r \in \R^{N_r \times N_r}$ for $r=1,\ldots,d$, denoting
$O=\diag(O_1,\ldots,O_d) \in \R^{N \times N}$, the joint
distribution of $(H_i)_{i \in \I}$ is equal to that of $(O^TH_iO)_{i \in \I}$.
\end{definition}

Let us provide several examples. We discuss the constructions of the spaces
$(\A,\tau,p_1,\ldots,p_d)$ for these examples in Appendix
\ref{appendix:freeprobdetails}.

\begin{example}\label{ex:semicircle}
Fix $r \in \{1,\ldots,d\}$ and let $G \in \C^{N \times N}$ be a simple random
matrix such that the diagonal block $G_{rr} \in \C^{N_r \times N_r}$ is
distributed as the GUE or GOE, scaled to have entries of variance $1/N_r$.
(Simple means $G_{st}=0$ for all other blocks $(s,t)$.)
Let $(\A,\tau,p_1,\ldots,p_d)$ be a rectangular space with
$\tau(p_s)=N_s/N$ for each $s=1,\ldots,d$, such that $\A$ contains
a self-adjoint simple element $g$ satisfying $g=g^*$ and $p_rgp_r=g$, with
moments given by the semi-circle law:
\[\tau_r(g^l)=\int_{-2}^2
\frac{x^l}{2\pi}\sqrt{4-x^2}\,dx\qquad \text{for all } l \geq 0.\]
For any corresponding $*$-polynomials $Q$ and $q$ as in
Definition \ref{def:equalDlaw}, we may verify
$N_r^{-1}\Tr_r Q(G)-\tau_r(q(g)) \to 0$ a.s.\ by the classical Wigner
semi-circle theorem \cite{wigner}.
Then $G$ and $g$ are asymptotically equal in $\D$-law a.s.
Furthermore, $G$ is block-orthogonally invariant.
\end{example}

\begin{example}\label{ex:MP}
Fix $r_1 \neq r_2 \in \{1,\ldots,d\}$ and let $G \in \C^{N \times N}$ be a
simple random matrix such that the block $G_{r_1r_2}$
has i.i.d.\ Gaussian or complex Gaussian entries with variance $1/N_{r_1}$. Let
$(\A,\tau,p_1,\ldots,p_d)$ satisfy
$\tau(p_s)=N_s/N$ for each $s$, such that $\A$ contains
a simple element $g$ satisfying $p_{r_1}gp_{r_2}=g$, where
$g^*g$ has moments given by the Marcenko-Pastur law:
\[\tau_{r_2}((g^*g)^l)=\int x^l \nu_{N_{r_2}/N_{r_1}}(x)dx
\qquad \text{for all }l \geq 0\]
where $\nu_\lambda$ is the standard Marcenko-Pastur density
\begin{equation}\label{eq:standardMP}
\nu_\lambda(x)=\frac{1}{2\pi}\frac{\sqrt{(\lambda_+-x)(x-\lambda_-)}}{
\lambda x}\1_{[\lambda_-,\lambda_+]}(x),\qquad
\lambda_\pm=(1\pm \sqrt{\lambda})^2.
\end{equation}
By definition of $\tau_r$ and the cyclic property of $\tau$, we also have
\[\tau_{r_1}((gg^*)^l)=(N_{r_2}/N_{r_1})\tau_{r_2}((g^*g)^l).\]
For any corresponding $*$-polynomials $Q$ and $q$ as in
Definition \ref{def:equalDlaw}, we may verify
$N_{r_2}^{-1}\Tr_{r_2} Q(G)-\tau_{r_2}(q(g)) \to 0$ and
$N_{r_1}^{-1} \Tr_{r_1} Q(G)-\tau_{r_1}(q(g)) \to 0$ a.s.\ by the classical
Marcenko-Pastur theorem \cite{marcenkopastur}. Then $G$ and $g$ are
asymptotically equal in $\D$-law a.s., and $G$ is block-orthogonally
invariant.
\end{example}

\begin{example}\label{ex:deterministic}
Let $B_1,\ldots,B_k \in \C^{N \times N}$ be deterministic simple matrices,
say with $P_{r_i}B_iP_{s_i}=B_i$ for each $i=1,\ldots,k$ and $r_i,s_i
\in \{1,\ldots,d\}$. Let
$O_1 \in \R^{N_1 \times N_1},\ldots,O_d \in \R^{N_d \times N_d}$ be
independent Haar-distributed orthogonal matrices, define 
$O=\diag(O_1,\ldots,O_d) \in \R^{N \times N}$, and let $\check{B}_i=O^TB_iO$.
Let $(\A,\tau,p_1,\ldots,p_d)$ satisfy $\tau(p_s)=N_s/N$ for each $s$,
such that $\A$ contains simple elements $b_1,\ldots,b_k$ satisfying
$p_{r_i}b_ip_{s_i}=b_i$ for each $i=1,\ldots,k$, and
\begin{equation}\label{eq:exdeterministic}
N_r^{-1} \Tr_r Q(B_1,\ldots,B_k)=\tau_r(q(b_1,\ldots,b_k))
\end{equation}
for any corresponding $*$-polynomials $Q$ and $q$ with coefficients in
$\langle P_1,\ldots,P_d \rangle$ and $\langle p_1,\ldots,p_d \rangle$.
As $\Tr_r Q(B_1,\ldots,B_k)$ is invariant under $B_i \mapsto O^TB_iO$,
(\ref{eq:exdeterministic}) holds also
with $\check{B}_i$ in place of $B_i$. Then
$(\check{B}_i)_{i \in \{1,\ldots,k\}}$ and $(b_i)_{i \in \{1,\ldots,k\}}$ are
exactly (and hence also asymptotically) equal in $\D$-law,
and $(\check{B}_i)_{i \in \{1,\ldots,k\}}$ is block-orthogonally
invariant by construction.
\end{example}

To study the interaction of several independent and block-orthogonally
invariant matrix families,
we will take a deterministic model for each family, as in Examples
\ref{ex:semicircle}, \ref{ex:MP}, and \ref{ex:deterministic} above, and consider
a combined model in which these families are $\D$-free:

\begin{definition}\label{def:freeequivalent}
Consider $(\C^{N \times N},N^{-1}\Tr,P_1,\ldots,P_d)$ as in
Example \ref{ex:matrixspace}. Suppose $(H_i)_{i
\in \I_1}$, $\ldots$, $(H_i)_{i \in \I_J}$ are finite families of
random matrices in $\C^{N \times N}$ such that:
\begin{itemize}
\item These families are independent from each other, and
\item For each $j=1,\ldots,J$,
$(H_i)_{i \in \I_j}$ is block-orthogonally invariant.
\end{itemize}
Then a {\bf free deterministic equivalent} for $(H_i)_{i \in \I_1},
\ldots,(H_i)_{i \in \I_J}$ is any ($N$-dependent)
rectangular probability space $(\A,\tau,p_1,\ldots,p_d)$ and families
$(h_i)_{i \in \I_1},\ldots,(h_i)_{i \in \I_J}$ of deterministic elements in
$\A$ such that, as $N \to \infty$:
\begin{itemize}
\item For each $r=1,\ldots,d$, $|N^{-1} \Tr P_r-\tau(p_r)| \to 0$,
\item For each $j=1,\ldots,J$,
$(H_i)_{i \in \I_j}$ and $(h_i)_{i \in \I_j}$ are asymptotically equal in
$\D$-law a.s., and
\item $(h_i)_{i \in \I_1},\ldots,(h_i)_{i \in \I_J}$
are free with amalgamation over $\D=\langle p_1,\ldots,p_d\rangle$.
\end{itemize}
\end{definition}

The main result of this section is the following asymptotic freeness theorem,
which establishes the validity of this approximation.

\begin{theorem}\label{thm:momentapprox}
In the space $(\C^{N \times N},N^{-1}\Tr,P_1,\ldots,P_d)$ of Example
\ref{ex:matrixspace}, suppose
$(H_i)_{i \in \I_1}$, $\ldots$, $(H_i)_{i \in \I_J}$ are independent,
block-orthogonally invariant families of random matrices, and
let $(h_i)_{i \in \I_1}$, $\ldots$, $(h_i)_{i \in \I_J}$
be any free deterministic
equivalent in $(\A,\tau,p_1,\ldots,p_d)$. If there exist constants
$C,c>0$ (independent of $N$) such that
$c<N_r/N$ for all $r$ and $\|H_i\|<C$ a.s.\ for all $i \in \I_j$, all $\I_j$,
and all large $N$, then $(H_i)_{i \in \I_j,j \in \{1,\ldots,J\}}$
and $(h_i)_{i \in \I_j,j \in \{1,\ldots,J\}}$ are asymptotically
equal in $\D$-law a.s.
\end{theorem}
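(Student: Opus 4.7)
The plan is to follow the classical asymptotic freeness argument for orthogonally invariant matrices due to Voiculescu, Hiai--Petz, and Collins--Śniady, adapted to the rectangular framework by doing Weingarten calculus on the product group $O(N_1) \times \cdots \times O(N_d)$ and by carrying $\D$-valued (rather than scalar) expectations throughout. Since Proposition \ref{prop:jointlawdetermined} already shows that the $\D$-law of the free model is determined by the individual $\D$-laws of the $(h_i)_{i \in \I_j}$, and these already match the $(H_i)_{i \in \I_j}$ asymptotically by the definition of free deterministic equivalent, the only thing left to verify is that the random matrix families themselves are asymptotically $\D$-free.

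First, I exploit block-orthogonal invariance to introduce, for each $j \in \{1,\ldots,J\}$, an independent Haar-distributed block-diagonal orthogonal $O^{(j)}=\diag(O_1^{(j)},\ldots,O_d^{(j)})$ with $O_s^{(j)}$ Haar on $O(N_s)$, and replace each $H_i$ for $i \in \I_j$ by $\check H_i := (O^{(j)})^T H_i O^{(j)}$. This preserves the joint distribution of all families, and after conditioning on the $(H_i)$ and using $\|H_i\|<C$ together with the hypothesized $\D$-law convergence, I may assume the $(H_i)$ are deterministic matrices whose $\D$-law matches that of the $(h_i)$ up to vanishing error, the only remaining randomness being the independent Haar conjugations. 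Asymptotic $\D$-freeness then reduces, via Definition \ref{def:free} and Proposition \ref{prop:jointlawdetermined}, to checking that for every alternating product $a_1 a_2 \cdots a_m$ in which $a_\ell$ is a $\D$-polynomial in family $j_\ell$ with $j_1 \neq j_2, j_2 \neq j_3, \ldots$ and $\F^\D(a_\ell)=0$, the block traces $\tau_r(a_1 a_2 \cdots a_m)$ vanish almost surely as $N \to \infty$.

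The heart of the proof is an orthogonal Weingarten expansion of $\E[\tau_r(a_1 \cdots a_m)]$. Since each $O^{(j)}$ decomposes blockwise into independent Haar orthogonals $O_s^{(j)}$ on $O(N_s)$, Weingarten integration produces a sum over pair partitions of the inserted matrix-entry indices, doubly colored by family and by block, with a factor of $N_s^{-1}$ per pair lying in block $s$ (to leading order in the Weingarten function expansion). Standard genus/non-crossing counting then shows that in the limit $N \to \infty$ with $N_s/N$ bounded below, only non-crossing pair partitions compatible with both colorings survive at leading order. A combinatorial check---essentially the scalar Collins--Śniady argument replayed blockwise---then shows that the $\F^\D$-centering of each $a_\ell$ is exactly the condition that kills every surviving pairing, so the leading contribution vanishes. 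Crossings and sub-leading Weingarten corrections contribute $O(N^{-1})$ under the norm bound, giving the desired limit.

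Finally, a second Weingarten calculation on a doubled Haar orthogonal yields the variance bound $\mathrm{Var}(\tau_r Q(\check H_i)) = O(N^{-2})$, and Borel--Cantelli upgrades convergence in expectation to almost sure convergence for each fixed $*$-polynomial $Q$. The main obstacle is the combinatorial step in the preceding paragraph: in the scalar $d=1$ setting this is the classical argument, but here the invariance group is the product $\prod_s O(N_s)$ and freeness is amalgamated over the diagonal algebra $\D$ rather than $\C$, so each Weingarten pair must be tracked by both a family label and a block label, and I must verify that the leading sum reproduces exactly the iterative $\F^\D$-valued moment-cumulant relation from Speicher's $\B$-valued free probability. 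Once that identification is established, the full theorem follows.
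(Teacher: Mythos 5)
Your high-level plan mirrors the paper's: use block-orthogonal invariance to introduce independent Haar conjugations, condition on the $H_i$'s to reduce to deterministic matrices conjugated by Haar block-orthogonals, center each factor and induct so that the crux is an alternating product of trace-zero matrices, and establish almost-sure convergence by a second-moment bound plus Borel--Cantelli. Where you diverge is in the core estimate. You propose full orthogonal Weingarten calculus on $\prod_s O(N_s)$ with a pair-partition expansion and genus counting, and flag as the ``main obstacle'' the verification that $\F^\D$-centering kills the surviving pairings. The paper deliberately does not go down that road: it states that its argument is inspired by Hiai--Petz but modifies the combinatorial step, and the actual mechanism is much cruder. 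Lemma \ref{lemma:orthogonalproperties} only supplies the size bound $\E[|O[i_1,j_1]\cdots O[i_K,j_K]|]\le C N^{-K/2}$ (via Collins--\'Sniady plus Cauchy--Schwarz) together with the sign-flip parity vanishing, with no asymptotics of the Weingarten function and no genus expansion; the counting is then done from scratch by Lemma \ref{lemma:deterministichelper}, a bespoke induction on the number of ``good'' index classes. These combine into a direct bound $\mathcal{E}\le CN^{-2}$ on the expectation of the \emph{squared} trace (the same Haar matrices appear in both factors), rather than your split into a leading-order mean computation plus a separate variance bound via a doubled Haar orthogonal. The paper's route buys self-containment and sidesteps exactly the step you identify as hard --- one never needs the leading-order structure of orthogonal Weingarten functions or to match a genus sum against the $\D$-valued moment-cumulant relations. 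A smaller point: your reduction silently conflates the model centering $t_k=\tau(q_k(h_i))/\tau(p_{r_k})$ with the empirical centering $T_k = N_{r_k}^{-1}\Tr Q_k(H_i)$; the paper shows $|t_k-T_k|\to 0$ a.s.\ and then recenters by $T_k$, so that $\Tr D_k=0$ holds \emph{exactly} after conditioning --- which is precisely what the trace-zero hypothesis of Lemma \ref{lemma:momentapprox} needs. As written, your plan is sound in outline but leaves unfinished the one step the paper identifies as where the standard Hiai--Petz argument has an omission, so it is a route-sketch rather than a complete alternative proof.
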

More informally, if $(h_i)_{i \in \I_j}$ asymptotically
models the family $(H_i)_{i \in \I_j}$ for each $j$, and
these matrix families are independent and block-orthogonally invariant,
then a system in which $(h_i)_{i \in \I_j}$ are $\D$-free asymptotically models 
the matrices jointly over $j$.

Theorem \ref{thm:momentapprox} is analogous to
\cite[Theorem 1.6]{benaychgeorges} and \cite[Theorem 2.7]{speichervargas},
which establish similar results for complex unitary invariance.
It permits multiple matrix families (where matrices
within each family are not independent),
uses the almost-sure trace $N^{-1} \Tr$ rather than $\E \circ N^{-1}\Tr$,
and imposes boundedness rather than joint convergence assumptions.
This last point fully embraces the deterministic equivalents approach.

We will apply Theorem \ref{thm:momentapprox} in the form of the following
corollary: Suppose that
$w \in \A$ satisfies $|\tau(w^l)| \leq C^l$ for a constant $C>0$ and all $l \geq
1$. We may define its Stieltjes transform by the convergent series
\begin{equation}\label{eq:stieltjesw}
m_w(z)=\tau\big((w-z)^{-1}\big)=-\sum_{l \geq 0}^\infty z^{-(l+1)}\tau(w^l)
\end{equation}
for $z \in \C^+$ with $|z|>C$, where we use the convention $w^0=1$ for all
$w \in \A$.
\begin{corollary}\label{cor:stieltjesapprox}
Under the assumptions of Theorem \ref{thm:momentapprox}, let
$Q$ be a self-adjoint $*$-polynomial (with $\C$-valued coefficients) in
$(x_i)_{i \in \I_j,j \in \{1,\ldots,J\}}$, and let
\begin{align*}
W&=Q(H_i:i \in \I_j,j \in \{1,\ldots,J\}) \in \C^{N \times N},\\
w&=Q(h_i:i \in \I_j,j \in \{1,\ldots,J\}) \in \A.
\end{align*}
Suppose $|\tau(w^l)| \leq C^l$ for all $N,l \geq 1$ and some $C>0$.
Then for a sufficiently large constant $C_0>0$,
letting $\mathbb{D}=\{z \in \C^+:|z|>C_0\}$ and defining
$m_W(z)=N^{-1} \Tr (W-z\Id_N)^{-1}$ and $m_w(z)=\tau((w-z)^{-1})$,
\[m_W(z)-m_w(z) \to 0\]
pointwise almost surely over $z \in \mathbb{D}$.
\end{corollary}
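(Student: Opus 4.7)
The plan is to expand both Stieltjes transforms as convergent Laurent series in $z^{-1}$ on a large-$|z|$ domain, reduce the claim to convergence of individual moments $N^{-1}\Tr(W^l) \to \tau(w^l)$, and then invoke Theorem~\ref{thm:momentapprox}. First I would produce a uniform operator-norm bound on $W$: because $Q$ is a fixed $*$-polynomial with $\C$-valued coefficients and $\|H_i\| < C$ a.s.\ for every $i$ and all large $N$, submultiplicativity of the operator norm yields a constant $C' = C'(Q,C) > 0$ with $\|W\| \leq C'$ a.s., so in particular $|N^{-1}\Tr(W^l)| \leq (C')^l$. Taking $C_0 = \max(C, C') + 1$, the two Neumann series
\[
m_W(z) = -\sum_{l \geq 0} z^{-(l+1)}\, N^{-1}\Tr(W^l),
\qquad
m_w(z) = -\sum_{l \geq 0} z^{-(l+1)}\, \tau(w^l)
\]
converge absolutely for every $z \in \mathbb{D} = \{z \in \C^+ : |z| > C_0\}$.

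Next I would establish termwise convergence of the coefficients. Since the $p_r$ are pairwise orthogonal projections summing to $1$, one has the decompositions $\tau(a) = \sum_{r=1}^d \tau(p_r)\,\tau_r(a)$ and, in the matrix space, $N^{-1}\Tr(A) = \sum_{r=1}^d (N_r/N)\, N_r^{-1}\Tr_r(A)$. For each fixed $l$, $Q^l$ is itself a $*$-polynomial in $\{x_i\}$ with $\C$-valued (hence $\D$-valued) coefficients, so Theorem~\ref{thm:momentapprox} gives $N_r^{-1}\Tr_r(W^l) - \tau_r(w^l) \to 0$ a.s.\ for each $r$. Combining this with $|N_r/N - \tau(p_r)| \to 0$ (built into the definition of a free deterministic equivalent), and using the a.s.\ bounds $|N_r^{-1}\Tr_r(W^l)| \leq (C')^l$ (which force $|\tau_r(w^l)| \leq (C')^l + 1$ for all large $N$, since $\tau_r(w^l)$ is deterministic), one obtains $N^{-1}\Tr(W^l) - \tau(w^l) \to 0$ a.s. Since this is a countable collection of a.s.\ statements (one per pair $(l,r)$), they hold simultaneously on a single event $E$ of full probability.

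Finally, on $E$ and for any $L \geq 1$ and $z \in \mathbb{D}$,
\[
|m_W(z) - m_w(z)| \leq \sum_{l=0}^{L} |z|^{-(l+1)}\bigl|N^{-1}\Tr(W^l) - \tau(w^l)\bigr| + \sum_{l > L} |z|^{-(l+1)}\bigl((C')^l + C^l\bigr),
\]
and the tail is a geometric remainder with ratio $\max(C, C')/|z| < 1$, so it can be made arbitrarily small by taking $L$ large, independently of $N$. The leading finite sum then tends to zero as $N \to \infty$ by the previous step, giving the claimed pointwise a.s.\ convergence. The only genuinely subtle point is the bookkeeping of almost-sure events: Theorem~\ref{thm:momentapprox} guarantees a.s.\ convergence of one moment at a time, so one must intersect countably many such events to obtain simultaneous control of all coefficients in the series. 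Apart from this and the elementary a.s.\ norm bound on $W$, the argument is a routine moment-to-Stieltjes transfer valid in the large-$|z|$ regime where Neumann expansions converge.
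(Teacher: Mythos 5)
Your proof is correct and follows essentially the same route as the paper: expand both Stieltjes transforms as Laurent series in $z^{-1}$ for large $|z|$, control the tails by the moment bounds $|\tau(w^l)| \leq C^l$ and $|N^{-1}\Tr W^l| \leq \|W\|^l$, and apply Theorem~\ref{thm:momentapprox} to the finite initial segment. You are somewhat more explicit than the paper about the intermediate step $\tau = \sum_r \tau(p_r)\tau_r$ (and its matrix analogue) needed to pass from the $\tau_r$-level conclusion of Theorem~\ref{thm:momentapprox} to the $\tau$-level claim, and about intersecting the countably many almost-sure events; these are points the paper leaves implicit but your handling is accurate.
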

Proofs of Theorem \ref{thm:momentapprox} and Corollary
\ref{cor:stieltjesapprox} are contained in Appendix \ref{appendix:freeapprox}.

\subsection{Computational tools}
Our computations in the free model will use the tools of free
cumulants, $\cR$-transforms, and Cauchy transforms discussed in
\cite{speicherbook,nicaetal,speichervargas}. We review some relevant concepts
here.

Let $(\A,\B,\F^\B)$ be a $\B$-valued probability space and $\F^\B:\A \to \B$
a conditional expectation.
For $l \geq 1$, the $l^\text{th}$ order {\bf free cumulant} of $\F^\B$ is a map
$\kappa_l^\B:\A^l \to \B$ defined by
$\F^\B$ and certain moment-cumulant relations over the
non-crossing partition lattice; we refer the reader to \cite{speichervargas} and
\cite[Chapters 2 and 3]{speicherbook} for details. We will use the
properties that $\kappa_l^\B$ is linear in each argument and 
satisfies the relations
\begin{align}
\kappa_l^\B(ba_1,a_2,\ldots,a_{l-1},a_lb')&=b\kappa_l^\B(a_1,\ldots,a_l)b',
\label{eq:kappaid1}\\
\kappa_l^\B(a_1,\ldots,a_{j-1},a_jb,a_{j+1},\ldots,a_l)
&=\kappa_l^\B(a_1,\ldots,a_j,ba_{j+1},\ldots,a_l)\label{eq:kappaid2}
\end{align}
for any $b,b' \in \B$ and $a_1,\ldots,a_l \in \A$.

For $a \in \A$, the {\bf $\pmb{\B}$-valued $\pmb{\cR}$-transform} of $a$ is
defined, for $b \in \B$, as
\begin{equation}\label{eq:Rtransform}
\cR_a^\B(b):=\sum_{l \geq 1} \kappa_l^{\B}(ab,\ldots,ab,a).
\end{equation}
The {\bf $\pmb{\B}$-valued Cauchy transform} of $a$ is defined, for invertible
$b \in \B$, as
\begin{equation}\label{eq:Cauchytransform}
G_a^\B(b):=\F^\B((b-a)^{-1})=\sum_{l \geq 0} \F^\B(b^{-1}(ab^{-1})^l),
\end{equation}
with the convention $a^0=1$ for all $a \in \A$. The moment-cumulant
relations imply that
$G_a^\B(b)$ and $\cR_a^\B(b)+b^{-1}$ are inverses with respect to composition:

\begin{proposition}\label{prop:GRrelation}
Let $(\A,\B,\F^\B)$ be a $\B$-valued probability space.
For $a \in \A$ and invertible $b \in \B$,
\begin{align}
G_a^{\B}(b^{-1}+\cR_a^{\B}(b))&=b,\label{eq:GRrelation}\\
G_a^{\B}(b)&=\left(b-\cR_a^{\B}(G_a^{\B}(b))\right)^{-1}.
\label{eq:GRrelation2}
\end{align}
\end{proposition}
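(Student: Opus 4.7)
The plan is to establish the implicit functional equation \eqref{eq:GRrelation2} directly from the $\B$-valued moment-cumulant formula, and then to deduce \eqref{eq:GRrelation} by functional inversion. This is the classical argument of Voiculescu adapted to the $\B$-valued setting (see \cite{speicherbook,nicaetal,speichervargas}); both identities should be read as holding in the appropriate formal-series or norm-convergent sense.

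For \eqref{eq:GRrelation2}, write $g := G_a^\B(b) \in \B$. The geometric-series expansion $(b-a)^{-1} = b^{-1}\sum_{l \geq 0}(ab^{-1})^l$, valid formally (or in norm when $\|a\|\,\|b^{-1}\|<1$), combined with $b \in \B$, gives
\[
bg \;=\; \sum_{l \geq 0}\F^\B\big((ab^{-1})^l\big) \;=\; 1 + \sum_{l \geq 1}\sum_{\pi \in \NC(l)} \kappa_\pi^\B\big[ab^{-1},\ldots,ab^{-1}\big]
\]
by the operator-valued moment-cumulant expansion. I would then reorganize the inner double sum according to the block $V_1 = \{1=i_1<i_2<\ldots<i_k\}$ of $\pi$ containing position $1$: non-crossingness forces the remaining blocks to decompose independently into non-crossing partitions of the intervals $(i_m,i_{m+1})$ and $(i_k,l]$, of lengths $j_m = i_{m+1}-i_m-1$ and $j_k = l-i_k$. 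Using the bimodule identities \eqref{eq:kappaid1}--\eqref{eq:kappaid2}, the sub-sum over non-crossing partitions of each inner interval reassembles, by moment-cumulant, into $\F^\B((ab^{-1})^{j_m})$, and this factor may be inserted as a right-multiplier on the cumulant argument at position $i_m$ for $m<k$, while the last inner moment $\F^\B((ab^{-1})^{j_k})$ is pulled out to the right of the whole cumulant.

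Summing over $j_1,\ldots,j_k \geq 0$ and using $\B$-bilinearity of cumulants, each inner sum collapses to $\sum_{j \geq 0}\F^\B((ab^{-1})^j) = bg$. The identity $ab^{-1}\cdot bg = ag$ simplifies the first $k-1$ slots of the cumulant, and one application of \eqref{eq:kappaid1} absorbs the trailing factor of $bg$ into the $k$-th slot before extracting a $g$ from it to match the definition \eqref{eq:Rtransform}, yielding
\[
bg \;=\; 1 + \sum_{k \geq 1}\kappa_k^\B(ag,\ldots,ag,a)\cdot g \;=\; 1 + \cR_a^\B(g)\cdot g.
\]
Rearranging, $(b-\cR_a^\B(g))g = 1$, i.e.\ $g = (b-\cR_a^\B(g))^{-1}$, which is \eqref{eq:GRrelation2}.

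For \eqref{eq:GRrelation}, set $\tilde b := b^{-1} + \cR_a^\B(b)$ and $\tilde g := G_a^\B(\tilde b)$. Applying \eqref{eq:GRrelation2} at $\tilde b$ gives $\tilde g^{-1} + \cR_a^\B(\tilde g) = \tilde b = b^{-1}+\cR_a^\B(b)$, which admits $\tilde g = b$ as a solution. To force $\tilde g = b$, note that $G_a^\B(c) = c^{-1} + c^{-1}\F^\B(a)c^{-1} + \cdots$, so the functional equation \eqref{eq:GRrelation2} determines the formal-power-series expansion of $\tilde g$ in the free cumulants of $a$ uniquely, and this expansion agrees order-by-order with that of $b$. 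The main obstacle is the combinatorial step in the derivation of \eqref{eq:GRrelation2}: one must verify that the operator-valued recursion on $\NC(l)$ by the outermost block is compatible with \eqref{eq:kappaid1}--\eqref{eq:kappaid2}, so that the $b^{-1}$ factors distribute correctly across cumulant slots and each inner interval resums to $bg$. Once this bookkeeping is in place, both identities follow from elementary manipulations.
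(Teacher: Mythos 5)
The paper does not actually prove this proposition; it refers to \cite[Theorem 4.9]{voiculescu} and \cite[Theorem 4.1.12]{speicherbook}, and the argument in the second of those references is precisely the combinatorial one you sketch. So your approach is the right one. The bookkeeping you flag as the main obstacle does go through as you describe: writing $M:=\sum_{j\geq 0}\F^\B((ab^{-1})^j)=bg$, the resummed contribution of all $\pi\in\NC(l)$ (over all $l$) with outermost block of size $k$ is
\[
\kappa_k^\B(ab^{-1}M,\ldots,ab^{-1}M,ab^{-1})\cdot M
=\kappa_k^\B(ag,\ldots,ag,ab^{-1})\cdot bg
=\kappa_k^\B(ag,\ldots,ag,ag)
=\kappa_k^\B(ag,\ldots,ag,a)\,g,
\]
where the last two equalities are two applications of \eqref{eq:kappaid1}; this matches \eqref{eq:Rtransform} and gives $bg=1+\cR_a^\B(g)g$ as you claim. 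Your inversion step for \eqref{eq:GRrelation} is correct but under-argued: from \eqref{eq:GRrelation2} you obtain $K_a^\B\circ G_a^\B=\mathrm{id}$ with $K_a^\B(c):=c^{-1}+\cR_a^\B(c)$, and you want the two-sided inverse. The cleanest finish is to observe that both $c\mapsto K_a^\B(c^{-1})$ and $c\mapsto G_a^\B(c^{-1})$ are formal (or locally analytic) maps with identity linear part near $c=0$, so by the (formal) inverse function theorem a one-sided inverse is automatically two-sided; equivalently, $K_a^\B$ is injective on the relevant domain, so $K_a^\B(\tilde g)=K_a^\B(b)$ forces $\tilde g=b$. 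Your phrasing about "the functional equation determining the power-series expansion uniquely" gestures at this but does not quite nail down which map is being inverted and why the inverse is unique.
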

\begin{proof}
See \cite[Theorem 4.9]{voiculescu} and also \cite[Theorem
4.1.12]{speicherbook}.
\end{proof}

\begin{remark*}
When $\A$ is a von Neumann algebra, the right sides of (\ref{eq:Rtransform})
and (\ref{eq:Cauchytransform}) may be understood as
convergent series in $\A$ with respect to the norm $\|\cdot\|$, for sufficiently
small $\|b\|$ and $\|b^{-1}\|$ respectively. Indeed, (\ref{eq:Cauchytransform})
defines a convergent series in $\B$ when $\|b^{-1}\|<1/\|a\|$, with
\begin{equation}\label{eq:Cauchybound}
\|G^\B_a(b)\| \leq \sum_{l \geq 0} \|b^{-1}\|^{l+1}\|a\|^l
=\frac{\|b^{-1}\|}{1-\|a\|\|b^{-1}\|}.
\end{equation}
Also, explicit inversion of the moment-cumulant relations for the non-crossing
partition lattice yields the cumulant bound
\begin{equation}\label{eq:cumulantbound}
\kappa_l^\B(a_1,\ldots,a_l) \leq 16^l\prod_{i=1}^l \|a_i\|
\end{equation}
(see \cite[Proposition 13.15]{nicaspeicherlectures}),
so (\ref{eq:Rtransform}) defines a convergent series in $\B$
when $16\|b\|<1/\|a\|$, with
\[\|\cR^\B_a(b)\| \leq \sum_{l \geq 1} 16^l \|a\|^l\|b\|^{l-1}
=\frac{16\|a\|}{1-16\|a\|\|b\|}.\]
The identities (\ref{eq:GRrelation}) and (\ref{eq:GRrelation2}) hold
as equalities of elements in $\B$ when $\|b\|$ and $\|b^{-1}\|$ are
sufficiently small, respectively.
\end{remark*}

Our computation will pass between $\cR$-transforms
and Cauchy transforms with respect to nested sub-algebras of $\A$.
Central to this approach is the following result from \cite{nicaetal} (see also
\cite{speichervargas}):
\begin{proposition}\label{prop:freeness}
Let $(\A,\D,\F^\D)$ be a $\D$-valued probability space, let $\B,\H
\subseteq \A$ be sub-$*$-algebras containing $\D$,
and let $\F^\B:\A \to \B$ be a conditional expectation such
that $\F^\D \circ \F^\B=\F^\D$. Let $\kappa_l^\B$ and $\kappa_l^\D$
denote the free cumulants for $\F^\B$ and $\F^\D$. If $\B$ and $\H$ are
$\D$-free, then for all $l \geq 1$, $h_1,\ldots,h_l \in \H$,
and $b_1,\ldots,b_{l-1} \in \B$,
\[\kappa_l^\B(h_1b_1,\ldots,h_{l-1}b_{l-1},h_l)
=\kappa_l^\D(h_1\F^\D(b_1),\ldots,h_{l-1}\F^\D(b_{l-1}),h_l).\]
\end{proposition}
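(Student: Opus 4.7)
The plan is to combine three standard ingredients from operator-valued free probability theory: (i) the characterization of $\D$-freeness via vanishing of mixed $\D$-cumulants, i.e.\ $\kappa_m^\D(a_1,\ldots,a_m) = 0$ whenever $m \geq 2$ and the arguments $a_i$ are not all in $\B$ or all in $\H$; (ii) the moment-cumulant relation $\F^\B(a_1 \cdots a_l) = \sum_{\pi \in \NC(l)} \kappa_\pi^\B[a_1,\ldots,a_l]$ together with its $\D$-analogue and their Möbius inversions; and (iii) the tower property $\F^\D = \F^\D \circ \F^\B$.

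As a preliminary lemma, I would prove the special case $\kappa_l^\B(h_1,\ldots,h_l) = \kappa_l^\D(h_1,\ldots,h_l)$ for $h_1,\ldots,h_l \in \H$, by induction on $l$. The base case reduces to establishing $\F^\B(h) = \F^\D(h)$ for $h \in \H$: writing $\F^\B(h) = \F^\D(h) + r$ with $r \in \B$ and $\F^\D(r) = 0$, the $\tau$-invariance of $\F^\B$ together with the alternating-product characterization of $\D$-freeness applied to $r^*$ and the $\D$-centered element $h - \F^\D(h)$ gives $\tau(r^* r) = 0$, whence $r = 0$ by faithfulness. The same argument extends to $\F^\B(h_1 \cdots h_l) \in \D$ for any $\H$-product. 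The inductive step then follows by subtracting the moment-cumulant formulas for $\F^\B(h_1 \cdots h_l)$ and $\F^\D(h_1 \cdots h_l)$: by induction all nested $\kappa_\pi^\B$ with $\pi \neq 1_l$ agree with the corresponding $\kappa_\pi^\D$, leaving $\kappa_l^\B(h_1,\ldots,h_l) = \kappa_l^\D(h_1,\ldots,h_l)$.

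For the general claim, I would again induct on $l$. The moment-cumulant formula rewrites
\[\F^\B(h_1 b_1 h_2 b_2 \cdots h_l) = \kappa_l^\B(h_1 b_1,\ldots,h_l) + \sum_{\pi \in \NC(l),\, \pi \neq 1_l} \kappa_\pi^\B[h_1 b_1,\ldots,h_l],\]
and the induction hypothesis together with identities (\ref{eq:kappaid1})--(\ref{eq:kappaid2}) identifies each nested $\kappa_\pi^\B$ for $\pi \neq 1_l$ with the corresponding nested $\kappa_\pi^\D$ having each $b_j$ replaced by $\F^\D(b_j)$. The claim therefore reduces to matching the top-level moments,
\[\F^\B(h_1 b_1 h_2 b_2 \cdots h_l) = \F^\D(h_1 \F^\D(b_1) h_2 \F^\D(b_2) \cdots h_l).\]
This I would prove by decomposing each $b_j = \F^\D(b_j) + b_j^\circ$ with $\F^\D(b_j^\circ) = 0$ and expanding multilinearly. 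Each cross term containing at least one $b_j^\circ$ can be grouped into an alternating product of the form $H_0 b_{j_1}^\circ H_1 \cdots b_{j_m}^\circ H_m$ with $H_i \in \H$ and the $b_{j_k}^\circ \in \B$ having vanishing $\F^\D$-expectation; after further centering each $H_i$ into its $\F^\D$-diagonal and $\F^\D$-centered pieces, the alternating-product vanishing characterization of $\D$-freeness forces the $\F^\D$-expectation of every such summand to be zero. Combining with the preliminary lemma, which lets us pass between $\F^\B$ and $\F^\D$ on $\H$-products, upgrades this $\F^\D$-vanishing to $\F^\B$-vanishing of the cross terms, leaving only the pure term that coincides with the right-hand side.

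The principal obstacle I anticipate is the combinatorial bookkeeping in the final moment identity: one must absorb the intermediate $\D$-coefficients into adjacent factors without disturbing the alternating structure, track which summands in the multilinear expansion reduce to fully $\D$-centered alternating products (and thus vanish by $\D$-freeness), and argue that the residual contributions under $\F^\B$ are governed by $\F^\D$ via Step 1. This step uses each of the ingredients (i)--(iii) in an essential way, and it is where $\D$-freeness enters the proof.
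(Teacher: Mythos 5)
The paper does not prove this proposition; it cites Nica--Shlyakhtenko--Speicher, ``Operator-valued distributions I,'' Theorem 3.6, so there is no in-paper argument to compare against. Your attempt, however, contains a genuine error in the reduction step, which I describe below.

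Your plan reduces the cumulant identity to the moment identity
\[
\F^\B(h_1 b_1 h_2 b_2 \cdots h_l) = \F^\D\big(h_1 \F^\D(b_1) h_2 \F^\D(b_2) \cdots h_l\big),
\]
by claiming that after subtracting, all nested $\kappa_\pi^\B$ with $\pi \neq 1_l$ match the corresponding $\kappa_\pi^\D$ with $b_j$ replaced by $\F^\D(b_j)$. Neither of these claims is true, and already $l=2$ is a counterexample. Write $b_1 = \F^\D(b_1) + b_1^\circ$. Your preliminary lemma (which is fine, modulo the remark below) gives $\F^\B(h_1\F^\D(b_1)h_2) = \F^\D(h_1\F^\D(b_1)h_2)$, but for the cross term one finds by expanding $h_i = \F^\D(h_i) + h_i^\circ$, using $\F^\B(h_i^\circ)=0$ and the module property, that
\[
\F^\B(h_1 b_1^\circ h_2) = \F^\D(h_1)\,b_1^\circ\,\F^\D(h_2),
\]
which is \emph{not} zero in general (the ``upgrade from $\F^\D$-vanishing to $\F^\B$-vanishing'' you invoke via the preliminary lemma does not apply, because the cross term lives in the mixed algebra $\langle \H,\B\rangle$, not in $\langle \H,\D\rangle$). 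Likewise the nested contribution for $\pi = 0_2$ is $\kappa_{0_2}^\B[h_1 b_1, h_2] = \F^\D(h_1)\,b_1\,\F^\D(h_2)$, whereas $\kappa_{0_2}^\D[h_1 \F^\D(b_1), h_2] = \F^\D(h_1)\,\F^\D(b_1)\,\F^\D(h_2)$; these also differ by exactly $\F^\D(h_1) b_1^\circ \F^\D(h_2)$. The proposition holds because these two discrepancies cancel in the cumulant, but your argument never exhibits that cancellation: it asserts two identities that are each individually false.

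A smaller issue: the base case of your preliminary lemma invokes $\tau$-invariance of $\F^\B$ and faithfulness of $\tau$, which are not among the stated hypotheses of the proposition (only the tower property $\F^\D = \F^\D \circ \F^\B$ is assumed). They are available in the paper's ambient von~Neumann setting, so this is forgivable in context, but worth flagging since the statement is phrased purely algebraically. A correct proof along these lines would need to work directly with the full cumulant difference $\kappa_l^\B - \kappa_l^\D$ (or with the M\"obius-inverted sum over $\NC(l)$) and show that the contributions from the $b_j^\circ$ components of the coefficients, which are nonzero for individual partitions, telescope to zero in the aggregate.
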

\begin{proof}
See \cite[Theorem 3.6]{nicaetal}.
\end{proof}

For sub-algebras $\D \subseteq \B \subseteq \A$ and conditional expectations
$\F^\D:\A \to \D$ and $\F^\B:\A \to \B$ satisfying (\ref{eq:tower}), we also
have for any $a \in \A$ and invertible $d \in \D$ (with sufficiently small
$\|d^{-1}\|$), by (\ref{eq:Cauchytransform}),
\begin{equation}\label{eq:Gprojection}
G_a^\D(d)=\F^\D \circ G_a^\B(d).
\end{equation}
Finally, note that for $\B=\C$ and $\F^\B=\tau$, the scalar-valued Cauchy
transform $G_a^\C(z)$ is simply $-m_a(z)$ from (\ref{eq:stieltjesw}).
(The minus sign is a difference in sign convention for the Cauchy/Stieltjes
transform.)

\section{Computation in the free model}\label{sec:modelcomputation}
We will prove analogues of Theorems
\ref{thm:bulkdistribution} and \ref{thm:algorithm} for a slightly more general
matrix model: Fix $k \geq 1$, let $p,n_1,\ldots,n_k,m_1,\ldots,m_k \in \N$, and
denote $n_+=\sum_{r=1}^k n_r$. Let $F \in \C^{n_+ \times n_+}$ be deterministic
with $F^*=F$, and denote by $F_{rs} \in \C^{n_r \times n_s}$ its $(r,s)$
submatrix. For $r=1,\ldots,k$, let $H_r \in \C^{m_r \times p}$ be
deterministic, and let $G_r$ be independent random matrices such that either
$G_r \in \R^{n_r \times m_r}$ with
$(G_r)_{ij} \overset{iid}{\sim} \Nor(0,n_r^{-1})$ or
$G_r \in \C^{n_r \times m_r}$ with $\Im (G_r)_{ij},\Re (G_r)_{ij}
\overset{iid}{\sim} \Nor(0,(2n_r)^{-1})$. Define
\[W:=\sum_{r,s=1}^k H_r^*G_r^*F_{rs}G_sH_s \in \C^{p \times p},\]
with empirical spectral measure $\mu_W$. Denote
$b \cdot H^*H=\sum_{s=1}^k b_s H_s^*H_s$,
and let $D(a)$ and $\Tr_r$ be as in Theorem \ref{thm:bulkdistribution}.

\begin{theorem}\label{thm:Wdistribution}
Suppose $p,n_1,\ldots,n_k,m_1,\ldots,m_k \to \infty$, such that $c<n_r/p<C$,
$c<m_r/p<C$, $\|H_r\|<C$, and $\|F_{rs}\|<C$ for all
$r,s=1,\ldots,k$ and some constants $C,c>0$. Then:
\begin{enumerate}[(a)]
\item For each $z \in \C^+$, there exist unique values
$a_1,\ldots,a_k \in \C^+ \cup \{0\}$ and
$b_1,\ldots,b_k \in \overline{\C^+}$
that satisfy, for $r=1,\ldots,k$, the equations
\begin{align}
a_r&=-\tfrac{1}{n_r}\Tr\left((z\Id_p+b \cdot H^*H)^{-1}H_r^*H_r\right),
\label{eq:Warecursion}\\
b_r&=-\tfrac{1}{n_r}\Tr_r\left([\Id_{n_+}+FD(a)]^{-1}F\right).
\label{eq:Wbrecursion}
\end{align}
\item $\mu_W-\mu_0 \to 0$ weakly a.s.\ for a probability measure $\mu_0$
on $\R$ with Stieltjes transform
\begin{equation}\label{eq:Wm0}
m_0(z):=-\tfrac{1}{p}\Tr\left((z\Id_p+b \cdot H^*H)^{-1}\right).
\end{equation}
\item For each $z \in \C^+$, the values $a_r,b_r$
in (a) are the limits, as $t \to \infty$, of $a_r^{(t)},b_r^{(t)}$
computed by iterating (\ref{eq:Warecursion}--\ref{eq:Wbrecursion})
in the manner of Theorem \ref{thm:algorithm}.
\end{enumerate}
\end{theorem}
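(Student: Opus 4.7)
The plan is to use the operator-valued free probability machinery of Section~\ref{sec:freeapprox}. I would construct a free deterministic equivalent for the independent matrix families $\{G_r\}$, $\{H_r\}$, and $\{F_{rs}\}$, apply Theorem~\ref{thm:momentapprox} together with Corollary~\ref{cor:stieltjesapprox} to reduce computation of the Stieltjes transform of $W$ to that of its free analog $w=\sum_{r,s}h_r^*g_r^*f_{rs}g_sh_s$, and then perform an algebraic calculation with $\D$-valued cumulants and Cauchy transforms to show that $m_w(z)$ satisfies (\ref{eq:Warecursion})--(\ref{eq:Wm0}). Existence, uniqueness, and convergence of the iteration for the fixed-point system (parts (a) and (c)) are handled separately by a contractive-mapping argument combined with analytic continuation, and the a.s.\ weak convergence in (b) then follows from Stieltjes-transform convergence together with standard tightness.

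For the reduction, I would embed everything into $\C^{N \times N}$ with $N=p+\sum_r n_r+\sum_r m_r$ and $d=2k+1$ block indices of sizes $p,n_1,\ldots,n_k,m_1,\ldots,m_k$, placing $H_r$ as a simple matrix in block $(k+r,0)$, $G_r$ in block $(r,k+r)$, and $F_{rs}$ in block $(r,s)$. As in the discussion following (\ref{eq:equivalentSigmahat}), Gaussian rotational invariance of each $G_r$ allows me to conjugate $F$ and each $H_r$ by independent Haar-distributed block orthogonal matrices without changing the law of $W$; the three families are then jointly independent and block-orthogonally invariant in the sense of Definition~\ref{def:orthogonalinvariance}. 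Theorem~\ref{thm:momentapprox} produces a free deterministic equivalent $(\A,\tau,p_0,\ldots,p_{2k})$ with $\D$-free families $\{g_r\}$, $\{h_r\}$, $\{f_{rs}\}$ whose $\D$-laws asymptotically match those of the original blocks (cf.\ Examples~\ref{ex:semicircle}--\ref{ex:deterministic}). The hypotheses force $\|W\|$ and $\|w\|$ to be uniformly bounded, so Corollary~\ref{cor:stieltjesapprox} yields $m_W(z)-m_w(z) \to 0$ pointwise a.s.\ on a half-plane $\{z \in \C^+:|z|>C_0\}$.

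The algebraic core is to compute $m_w(z)$ in the free model. Let $\B=\langle \{f_{rs}\},\{h_r\},\D\rangle_{W^*}$, so that $\{g_1,\ldots,g_k\}$ is $\D$-free from $\B$. Because each $g_r$ is a free rectangular semicircular element between $p_r$ and $p_{k+r}$, its only nonvanishing $\D$-valued free cumulants are of order two, with an explicit covariance structure determined by the traces of the relevant projections. Applying Proposition~\ref{prop:freeness} then converts all $\B$-valued cumulants appearing in the expansion of $w$ into $\D$-valued cumulants in which the $\B$-coefficients are replaced by their $\F^\D$-projections. Using (\ref{eq:GRrelation2}) to express $G_w^\B(z)$ through $\cR_w^\B$, and then projecting down to $\D$ via (\ref{eq:Gprojection}), one finds that the diagonal value of $\F^\D(G_w^\B(z))$ on block $p_{k+r}$ plays the role of $a_r$ and the diagonal value on block $p_r$ plays the role of $b_r$. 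Unwinding these identifications yields precisely equations (\ref{eq:Warecursion}) and (\ref{eq:Wbrecursion}), while the diagonal value on $p_0$ yields (\ref{eq:Wm0}).

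For (a) and (c), the pair $(a,b)$ produced above is automatically a fixed point. For uniqueness and convergence of the iteration I would show that, for any $z \in \C^+$, the composite map $b \mapsto b'$ obtained by first computing $a$ from $b$ via (\ref{eq:Warecursion}) and then $b'$ from $a$ via (\ref{eq:Wbrecursion}) preserves a bounded invariant set in $(\overline{\C^+})^k$ and is a strict contraction in a suitable hyperbolic-type metric, so Banach's theorem gives uniqueness and global convergence of the iterative scheme. A natural route is to establish the contraction first for $\Im z$ large via resolvent expansions and then extend uniqueness throughout $\C^+$ by analytic continuation, noting that any analytic solution must coincide with the free-model candidate on the large-$|z|$ region. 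Part (b) then follows: $m_W(z)-m_0(z) \to 0$ pointwise a.s.\ on $\{|z|>C_0\}$, and tightness of $\mu_W$ (from the uniform bound on $\|W\|$) combined with uniqueness of weak limits via Stieltjes transforms upgrades this to $\mu_W-\mu_0 \to 0$ weakly a.s. The main obstacle is the algebraic step: although conceptually it reduces to Proposition~\ref{prop:freeness} and the rectangular semicircular structure of the $g_r$'s, the practical bookkeeping of which block of $\F^\D(G_w^\B(z))$ encodes $a_r$ versus $b_r$, and the manipulation converting $\B$-valued $R$-transforms back to a $\D$-valued Cauchy transform, are where the specific coupled form of (\ref{eq:Warecursion})--(\ref{eq:Wbrecursion}) actually emerges; the contraction and tightness arguments are then comparatively standard.
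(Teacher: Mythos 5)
Your high-level plan tracks the paper's structure: the embedding into a $(2k+1)$-block rectangular space after block-orthogonal randomization, the invocation of Theorem~\ref{thm:momentapprox} and Corollary~\ref{cor:stieltjesapprox} to reduce to the free element $w=\sum_{r,s}h_r^*g_r^*f_{rs}g_sh_s$, and the contractive-mapping/analytic-continuation argument for existence, uniqueness, and convergence of the iteration (together with the upgrade from Stieltjes-transform convergence to weak convergence for part (b)) all agree with the paper's Lemma~\ref{lemma:freeconstruction}, Lemma~\ref{lemma:cauchycomputation}, and Appendix~\ref{appendix:fixedpoint}.

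The gap is in the algebraic core, and one step you state cannot be correct as written. You propose reading $a_r$ and $b_r$ off the diagonal of $\F^\D(G_w^\B(z))$ on blocks $p_{k+r}$ and $p_r$ respectively. But $w$ is $(0,0)$-simple, so $z-w=(zp_0-w)+z(1-p_0)$ is block-diagonal, and $\tau_r\big((z-w)^{-1}\big)=z^{-1}$ for every $r\neq 0$, trivially and independently of the $f$'s and $h$'s; those diagonal entries carry no information. The quantities $a_r$ and $b_r$ in (\ref{eq:Warecursion})--(\ref{eq:Wbrecursion}) arise from Cauchy transforms of auxiliary elements evaluated at nontrivial $\D$-valued arguments, not from the resolvent of $w$. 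The paper obtains this via a genuine two-level subordination (Lemma~\ref{lemma:freecompute} applied twice): first with $\cC=\H$ and $\B=\langle\mathcal{F},\G\rangle$ to relate $G_w^\D$ to $G_v^\D$ for $v=\sum g_r^*f_{rs}g_s$ (producing $\alpha_r$, essentially $a_r$, and the $\cR$-transform quantity $\beta_r=-b_r$), and then with $\cC=\G$ and $\B=\mathcal{F}$ to relate $G_v^\D$ to $G_u^\D$ for $u=\sum f_{rs}$, closing the loop through the Marcenko--Pastur $\cR$-transform of $g_r^*g_r$. Your proposed one-step integration of the $g_r$'s against the combined algebra $\B=\langle\{f_{rs}\},\{h_r\},\D\rangle$ is not impossible in principle, but you would still have to unwind the $\D$-freeness of $\{f_{rs}\}$ and $\{h_r\}$ inside $\B$ to produce the coupled system; as described, the proposal never does that, and the block identification you give is precisely where the argument would break down.
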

\noindent Theorems \ref{thm:bulkdistribution} and \ref{thm:algorithm}
follow by specializing this result to $F=U^TBU$ and $m_r=p$, $n_r=I_r$, and
$H_r=\Sigma_r^{1/2}$ for each $r=1,\ldots,k$.

In this section, we carry out the bulk of the proof of Theorem
\ref{thm:Wdistribution} by
\begin{enumerate}[1.]
\item Defining a free deterministic equivalent for this matrix model, and
\item Showing that the Stieltjes transform of the element $w$
(modeling $W$) satisfies (\ref{eq:Warecursion}--\ref{eq:Wm0}).
\end{enumerate}
These steps correspond to the separation of approximation and computation
discussed in Section \ref{subsec:proofsketch}.

For the reader's convenience, in Appendix \ref{appendix:march-past-case}, we
provide a simplified version of these steps for the special case of
Theorem \ref{thm:Wdistribution} corresponding to Theorem \ref{thm:mp}
for sample covariance matrices, which illustrates the main ideas.

\subsection{Defining a free deterministic equivalent}

Consider the transformations
\[H_r \mapsto O_r^TH_rO_0,\qquad F_{rs} \mapsto O_{k+r}^TF_{rs}O_{k+s}\]
for independent Haar-distributed orthogonal matrices $O_0,\ldots,O_{2k}$
of the appropriate sizes. As in Section \ref{subsec:proofsketch}, 
$\mu_W$ remains invariant in law under these transformations.
Hence it suffices to prove
Theorem \ref{thm:Wdistribution} with $H_r$ and $F_{rs}$ replaced by these
randomly-rotated matrices, which (with a slight abuse of notation) we
continue to denote by $H_r$ and $F_{rs}$.

Let $N=p+\sum_{r=1}^k m_r+\sum_{r=1}^k n_r$, and embed the matrices
$W,H_r,G_r,F_{rs}$ as simple elements of $\C^{N \times N}$ in the following
regions of the block-matrix decomposition corresponding to
$\C^N=\C^p \oplus \C^{m_1} \oplus \cdots \oplus \C^{m_k}
\oplus \C^{n_1} \oplus \cdots \oplus \C^{n_k}$:
\[\begin{array}{|c|c|c|c|c|c|c|}
\hline
W & H_1^* & \cdots & H_k^* & & & \\
\hline
H_1 & & & & G_1^* & & \\
\hline
\vdots & & & & & \ddots & \\
\hline
H_k & & & & & & G_k^* \\
\hline
& G_1 & & & F_{1,1} & \cdots & F_{1,k} \\
\hline
 & & \ddots & & \vdots & \ddots & \vdots \\
\hline
& & & G_k & F_{k,1} & \cdots & F_{k,k} \\
\hline
\end{array}\]
Denote by $P_0,\ldots,P_{2k}$ the diagonal projections corresponding to 
the above decomposition, and by
$\tilde{W},\tilde{F}_{r,s},\tilde{G}_r,\tilde{H}_r \in
\C^{N \times N}$ the embedded matrices. (I.e.\ we have
$P_0=\diag(\Id_p,0,\ldots,0)$,
$P_1=\diag(0,\Id_{m_1},\ldots,0)$, etc. $\tilde{W}$ has
upper-left block equal to $W$ and remaining blocks 0, etc.)
Then $\tilde{W},\tilde{F}_{r,s},\tilde{G}_r,\tilde{H}_r$ are
simple elements of the rectangular space
$(\C^{N \times N},N^{-1} \Tr,P_0,\ldots,P_{2k})$, and the $k+2$ families
$\{\tilde{F}_{r,s}\}$, $\{\tilde{H}_r\}$, $\tilde{G}_1$, $\ldots$, $\tilde{G}_k$
are independent of each other and are block-orthogonally invariant.

For the approximating free model, consider a second
($N$-dependent) rectangular probability space $(\A,\tau,p_0,\ldots,p_{2k})$
with deterministic elements $f_{rs},g_r,h_r \in \A$, such that the following
hold:
\begin{enumerate}[1.]
\item $p_0,\ldots,p_{2k}$ have traces
\[\tau(p_0)=p/N,\;\tau(p_r)=m_r/N,
\;\tau(p_{k+r})=n_r/N\qquad \text{for all } r=1,\ldots,k.\]
\item $f_{rs},g_r,h_r$ are simple elements such that for
all $r,s \in \{1,\ldots,k\}$,
\[p_{k+r}f_{rs}p_{k+s}=f_{rs},\qquad p_{k+r}g_rp_r=g_r,\qquad p_rh_rp_0=h_r.\]
\item $\{f_{rs}:1 \leq r,s \leq k\}$ has the same joint $\D$-law as
$\{\tilde{F}_{r,s}:1 \leq r,s \leq k\}$, and $\{h_r:1 \leq r \leq k\}$ has the
same joint $\D$-law as $\{\tilde{H}_r:1 \leq r \leq k\}$. I.e., for any $r \in
\{0,\ldots,2k\}$ and
any non-commutative $*$-polynomials $Q_1,Q_2$ with coefficients in
$\langle P_0,\ldots,P_{2k} \rangle$, letting $q_1,q_2$ denote the
corresponding $*$-polynomials with coefficients in $\langle p_0,\ldots,p_{2k}
\rangle$,
\begin{align}
\tau_r\left[q_1(f_{st}:s,t \in \{1,\ldots,k\})\right]
&=N_r^{-1}\Tr_r Q_1(\tilde{F}_{s,t}:s,t \in \{1,\ldots,k\}),\label{eq:fF}\\
\tau_r\left[q_2(h_s:s \in \{1,\ldots,k\})\right]
&=N_r^{-1}\Tr_r Q_2(\tilde{H}_s:s \in \{1,\ldots,k\}).\label{eq:hH}
\end{align}
\item For each $r$, $g_r^*g_r$ has
Marcenko-Pastur law with parameter $\lambda=m_r/n_r$. I.e.\ for $\nu_\lambda$
as in (\ref{eq:standardMP}),
\begin{equation}\label{eq:gG}
\tau_r((g_r^*g_r)^l)=\int x^l \nu_{m_r/n_r}(x)dx
\qquad \text{for all }l \geq 0.
\end{equation}
\item The $k+2$ families $\{f_{rs}\}$,
$\{h_r\}$, $g_1,\ldots,g_k$
are free with amalgamation over $\D=\langle p_0,\ldots,p_{2k}\rangle$.
\end{enumerate}

The right sides of (\ref{eq:fF}) and (\ref{eq:hH}) are deterministic,
as they are invariant to the random rotations of $F_{rs}$ and $H_r$.
Also, (\ref{eq:gG}) completely specifies $\tau(q(g_r))$ for any $*$-polynomial
$q$ with coefficients in $\D$.
Then these conditions 1--5 fully specify the joint $\D$-law of all elements
$f_{rs},g_r,h_r \in \A$. These elements are a free deterministic
equivalent for $\tilde{F}_{r,s},\tilde{G}_r,\tilde{H}_r \in \C^{N \times
N}$ in the sense of Definition \ref{def:freeequivalent}.

The following lemma establishes existence of this model as a von Neumann
algebra; its proof is deferred to Appendix \ref{appendix:freeprobdetails}.
\begin{lemma}\label{lemma:freeconstruction}
Under the conditions of Theorem \ref{thm:Wdistribution},
there exists a ($N$-dependent) rectangular probability space
$(\A,\tau,p_0,\ldots,p_{2k})$ such that:
\begin{enumerate}[(a)]
\item $\A$ is a von Neumann algebra and $\tau$ is a positive, faithful,
normal trace.
\item $\A$ contains elements $f_{rs},g_r,h_r$ for $r,s \in \{1,\ldots,k\}$ that
satisfy the above conditions. Furthermore, the von Neumann sub-algebras
$\langle \D,\{f_{rs}\} \rangle_{W^*}$, $\langle \D,\{h_r\}\rangle_{W^*}$,
$\langle \D,g_1 \rangle_{W^*}$, ..., $\langle \D,g_k \rangle_{W^*}$ are free
over $\D$.
\item There exists a constant $C>0$ such that
$\|f_{rs}\|,\|h_r\|,\|g_r\| \leq C$ for all $N$ and all $r,s$.
\end{enumerate}
\end{lemma}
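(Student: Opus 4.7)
The plan is to construct, for each of the families $\{f_{rs}\}$, $\{h_r\}$, and each $g_r$ separately, a tracial von Neumann algebra containing a common copy of $\D$ with the prescribed traces on the projections, and then to form the reduced amalgamated free product over $\D$ of these algebras. The reduced amalgamated free product construction for tracial von Neumann algebras with a common base $\D$ and $\tau$-preserving conditional expectations (going back to Voiculescu, and developed further by Popa and Ueda) produces a tracial von Neumann algebra with a positive, faithful, normal trace extending the given ones, in which the constituent sub-algebras are $\D$-free by construction. This immediately yields part (a) of the lemma and the freeness assertion in part (b).

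For the ``deterministic'' families $\{f_{rs}\}$ and $\{h_r\}$, I would take $\A_F^0$ and $\A_H^0$ to be the von Neumann sub-algebras of $(\C^{N \times N}, N^{-1} \Tr)$ generated by the block projections $P_0, \ldots, P_{2k}$ together with the randomly-rotated matrices $\tilde F_{r,s}$ (respectively $\tilde H_r$); these are finite-dimensional, the trace is manifestly positive, faithful, and normal, and the prescribed $\D$-laws (\ref{eq:fF})--(\ref{eq:hH}) hold tautologically. The technically delicate piece is building $\A_{g_r}^0$ containing a rectangular element $g_r = p_{k+r} g_r p_0$ with $g_r^* g_r$ having Marcenko-Pastur distribution of parameter $m_r/n_r$. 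I would take a base $\mathrm{II}_1$ factor $\mathcal{M}$ (for instance the free group factor $L(\F_\infty)$), choose mutually orthogonal projections $q_0, \ldots, q_{2k} \in \mathcal{M}$ summing to $1$ with traces matching the prescribed values, and set $g_r := q_{k+r} c q_0$ where $c \in \mathcal{M}$ is an appropriately normalized circular (or semicircular, in the real case) element $*$-free from the projections. Standard free probability then gives $g_r^* g_r$ the required Marcenko-Pastur law in the compression $q_0 \mathcal{M} q_0$, with $C^*$-norm $1 + \sqrt{m_r/n_r}$, uniformly bounded by the hypotheses of Theorem \ref{thm:Wdistribution}.

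Assembling these building blocks as $\A := \A_F^0 *_\D \A_H^0 *_\D \A_{g_1}^0 *_\D \cdots *_\D \A_{g_k}^0$, parts (b) and (c) follow from general properties of the amalgamated free product: each individual $\D$-law is preserved inside $\A$, the required $\D$-freeness holds by construction, and the $C^*$-norms of the generators within $\A$ agree with their norms in the building blocks, so $\|f_{rs}\| \leq \|F\|$, $\|h_r\| \leq \|H_r\|$, and $\|g_r\| = 1 + \sqrt{m_r/n_r}$, all of which are $O(1)$ by hypothesis. The main obstacle I anticipate is verifying that the amalgamated free product is genuinely a von Neumann algebra with a faithful normal trace (rather than merely a $C^*$-algebra with a state); this reduces to the fact that each building block admits a faithful $\tau$-preserving normal conditional expectation onto $\D$, which in turn follows from finite-dimensionality of $\D$ and faithfulness of the trace on each building block. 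A secondary point requiring care is the precise rescaling of $c$ so that $g_r^* g_r$ realizes exactly the law $\nu_{m_r/n_r}$ in the sense of $\tau_r$, which is a routine but non-trivial normalization calculation.
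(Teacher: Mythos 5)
Your overall strategy --- build one tracial von Neumann algebra per matrix family, each containing a common copy of $\D$ with the prescribed projection traces, then form the reduced amalgamated free product over $\D$ --- is exactly the paper's strategy, and your handling of the deterministic families $\{f_{rs}\}$ and $\{h_r\}$ as finite-dimensional sub-algebras of $(\C^{N\times N}, N^{-1}\Tr)$ matches the paper's Example~\ref{ex:deterministic} construction. Where you diverge is in realizing the $g_r$ block: you compress a circular (or semicircular) element $c$ in a $\mathrm{II}_1$ factor such as $L(\F_\infty)$ between two projections $*$-free from $c$. The paper instead builds the $g_r$ block directly (Example~\ref{ex:MP} and Lemma~\ref{lemma:individualconstruction}) as the von Neumann algebra of $(2k+1)\times(2k+1)$ random matrices over a classical probability space, with $g_r$ having a single nonzero off-diagonal entry equal to a bounded scalar random variable distributed as the square root of $\nu_{m_r/n_r}$. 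This has the advantage that condition~(\ref{eq:gG}) holds tautologically, with no free-probabilistic compression argument and no rescaling to check, and the $C^*$-norm bound $\|g_r\| \le 1+\sqrt{m_r/n_r}$ is immediate from the bounded support of the MP law. Your route is workable but pushes you into precisely the ``routine but non-trivial normalization calculation'' you flag at the end --- computing the law of $q_{k+r} c q_r$ compressed to $q_r\A q_r$ --- which the paper's construction avoids entirely. Two small corrections: $g_r$ must satisfy $p_{k+r}g_rp_r = g_r$, so you should compress as $q_{k+r}\,c\,q_r$, not $q_{k+r}\,c\,q_0$; and for part~(c) you only need a uniform bound such as $\|g_r\| \le \|c\| = 2$ from the compression, rather than the exact value $1+\sqrt{m_r/n_r}$ (equality of the compressed norm with the spectral-radius formula requires a separate argument you don't need).
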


\subsection{Computing the Stieltjes transform of $w$}

We will use twice the following intermediary lemma:
\begin{lemma}\label{lemma:freecompute}
Let $(\A,\tau,q_0,q_1,\ldots,q_k)$ be a rectangular probability space, where
$\A$ is von Neumann and $\tau$ is positive, faithful, and normal.
Let $\D=\langle q_0,\ldots,q_k \rangle$, let $\B,\cC \subset \A$ be von Neumann
sub-algebras containing $\D$ that are free over $\D$, and let
$\F^\D:\A \to \D$ and $\F^\cC:\A \to \cC$ be the $\tau$-invariant conditional
expectations.

Let $b_{rs} \in \mathcal{B}$ and $c_r \in \mathcal{C}$ for $1 \leq r,s \leq k$
be such that $q_rb_{rs}q_s=b_{rs}$, $q_rc_r=c_r$, $\|b_{rs}\| \leq C$, and
$\|c_r\| \leq C$ for some constant $C>0$. Define
$a=\sum_{r,s=1}^k c_r^*b_{rs}c_s$ and $b=\sum_{r,s=1}^k b_{rs}$. Then
for $e \in \cC$ with $\|e\|$ sufficiently small,
\[\cR^\cC_a(e)=\sum_{r=1}^k c_r^*c_r\tau_r\left(
\cR^\D_b\left(\sum_{s=1}^k \tau_s(c_sec_s^*)q_s\right)\right),\]
where $\cR_a^\cC$ and $\cR_b^\D$ are the $\cC$-valued and $\D$-valued
$\cR$-transforms of $a$ and $b$.
\end{lemma}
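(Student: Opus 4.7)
The plan is to expand $\cR_a^\cC(e)=\sum_{l\ge1}\kappa_l^\cC(ae,\ldots,ae,a)$, insert $a=\sum_{r,s}c_r^*b_{rs}c_s$ by multilinearity of the free cumulant, and reduce the resulting $\cC$-valued cumulants to $\D$-valued ones via Proposition~\ref{prop:freeness}. A typical summand is
\[
\kappa_l^\cC\bigl(c_{r_1}^*b_{r_1s_1}c_{s_1}e,\;c_{r_2}^*b_{r_2s_2}c_{s_2}e,\;\ldots,\;c_{r_l}^*b_{r_ls_l}c_{s_l}\bigr).
\]
Using (\ref{eq:kappaid1}) I pull $c_{r_1}^*$ out on the left and $c_{s_l}$ out on the right; then iterated applications of (\ref{eq:kappaid2}) migrate each $c_{r_{i+1}}^*$ from the start of slot $i+1$ to the end of slot $i$. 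Slot $i<l$ then takes the form $b_{r_is_i}\cdot(c_{s_i}ec_{r_{i+1}}^*)$ with $b_{r_is_i}\in\B$ and $c_{s_i}ec_{r_{i+1}}^*\in\cC$, while slot $l$ is just $b_{r_ls_l}\in\B$ --- precisely the pattern to which Proposition~\ref{prop:freeness} applies.

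The proposition then replaces $\kappa_l^\cC$ by $\kappa_l^\D$, with each $c_{s_i}ec_{r_{i+1}}^*$ replaced by $\F^\D(c_{s_i}ec_{r_{i+1}}^*)$. Since $q_rc_r=c_r$ forces $q_tc_s=\delta_{ts}c_s$ and $c_r^*q_t=\delta_{rt}c_r^*$, one computes $\F^\D(c_sec_r^*)=\delta_{sr}\tau_s(c_sec_s^*)q_s$, collapsing the sum to chains with $s_i=r_{i+1}$. Writing $t_0=r_1$, $t_i=s_i=r_{i+1}$ for $1\le i\le l-1$, $t_l=s_l$, and $\xi_t:=\tau_t(c_tec_t^*)\in\C$, the scalars $\xi_{t_1}\cdots\xi_{t_{l-1}}$ factor out (using $b_{r_is_i}q_{s_i}=b_{r_is_i}$ together with (\ref{eq:kappaid2})), leaving the pure $\D$-cumulant $\kappa_l^\D(b_{t_0t_1},b_{t_1t_2},\ldots,b_{t_{l-1}t_l})$ inside.

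This $\D$-cumulant has the form $q_{t_0}\alpha q_{t_l}$ with $\alpha\in\D$, so vanishes unless $t_0=t_l$, enforcing a cyclic chain. Combined with the outside factor $c_{t_0}^*q_{t_0}c_{t_0}=c_{t_0}^*c_{t_0}$ and summation over $l$ and $(t_1,\ldots,t_{l-1})$, the inner sum is precisely the defining expansion (\ref{eq:Rtransform}) of $\cR_b^\D$ at $d=\sum_s\xi_sq_s$, restricted to its $q_{t_0}$-component; equivalently, it equals $\tau_{t_0}\bigl(\cR_b^\D(\sum_s\tau_s(c_sec_s^*)q_s)\bigr)$. Summing over $t_0=r$ gives the stated identity.

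All manipulations are legitimate as absolutely convergent series for $\|e\|$ small enough, by (\ref{eq:cumulantbound}) together with $\|b_{rs}\|,\|c_r\|\le C$ and $|\xi_s|\le C^2\|e\|$. The main obstacle is the combinatorial bookkeeping: verifying carefully, via (\ref{eq:kappaid1})--(\ref{eq:kappaid2}), that every interior $c$-factor can be re-parenthesized to leave each slot in the desired $\B\cdot\cC$ form, and then checking that the chain constraint $s_i=r_{i+1}$ produced by $\F^\D$, together with the cyclic constraint $t_0=t_l$ from the support of the $\D$-cumulant, reconstructs precisely the summation implicit in $\cR_b^\D(\sum_s\xi_sq_s)$.
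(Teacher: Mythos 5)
Your proposal is correct and follows essentially the same route as the paper's proof: expand $\cR_a^\cC$ via free cumulants, use multilinearity and identities (\ref{eq:kappaid1})--(\ref{eq:kappaid2}) to isolate the $\B$-elements $b_{rs}$ with $\cC$-interludes $c_{s_i}ec_{r_{i+1}}^*$, invoke Proposition~\ref{prop:freeness} to pass from $\kappa_l^\cC$ to $\kappa_l^\D$ with $\F^\D$ applied to the interludes, collapse the sum via the vanishing of $\F^\D(c_sec_r^*)$ for $s\neq r$ and the diagonal support of the $\D$-cumulant, and finally recognize the resulting chain sum as $\tau_r(\cR_b^\D(d))$. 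The only (immaterial) presentational differences are that you derive the cyclic constraint $t_0=t_l$ from the cumulant lying in $q_{t_0}\D q_{t_l}$, whereas the paper obtains it from the outer factors $c_{r_1}^*(\cdot)c_{s_l}$, and you factor the scalars $\xi_{t_i}$ out explicitly while the paper keeps $d$ inside and then separately expands $\kappa_l^\D(bd,\ldots,bd,b)$ to match.
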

\begin{proof}
We use the computational idea of \cite{speichervargas}:
Denote by $\kappa_l^\cC$ and $\kappa_l^\D$ the $\cC$-valued and $\D$-valued
free cumulants. For $l \geq 1$ and $e \in \cC$,
\begin{align*}
&\kappa_l^\cC(ae,\ldots,ae,a)\\
&=\kappa_l^\cC\left(\sum_{r,s=1}^k c_r^*b_{rs}c_se,\;
\ldots,\;\sum_{r,s=1}^k c_r^*b_{rs}c_se,
\sum_{r,s=1}^d c_r^*b_{rs}c_s\right)\\
&=\sum_{r_1,s_1,\ldots,r_l,s_l=1}^k
\kappa_l^\cC(c_{r_1}^*b_{r_1s_1}c_{s_1}e,\;\ldots,\;
c_{r_{l-1}}^*b_{r_{l-1}s_{l-1}}c_{s_{l-1}}e,\;
c_{r_l}^*b_{r_ls_l}c_{s_l})\\
&=\sum_{r_1,s_1,\ldots,r_l,s_l=1}^k
c_{r_1}^*\kappa_l^\cC(b_{r_1s_1}c_{s_1}ec_{r_2}^*,\;\ldots,\;
b_{r_{l-1}s_{l-1}}c_{s_{l-1}}ec_{r_l}^*,\;
b_{r_ls_l})\,c_{s_l}\\
&=\sum_{r_1,s_1,\ldots,r_l,s_l=1}^k
c_{r_1}^*\kappa_l^\D(b_{r_1s_1}\F^\D(c_{s_1}ec_{r_2}^*),\;\ldots,\;
b_{r_{l-1}s_{l-1}}\F^\D(c_{s_{l-1}}ec_{r_l}^*),\;
b_{r_ls_l})\,c_{s_l},
\end{align*}
where we applied the definition of $a$, multi-linearity of $\kappa_l^\cC$,
the identities (\ref{eq:kappaid1}) and (\ref{eq:kappaid2}), and Proposition
\ref{prop:freeness} using freeness of $\B$ and $\cC$ over $\D$.

By the identity $c_r=q_rc_r$, each $c_sec_r^*$ is simple,
and we have from (\ref{eq:FD})
\[\F^\D(c_sec_r^*)=\begin{cases} 0 & \text{if }s \neq r \\
\tau_s(c_sec_s^*)q_s & \text{if }s=r. \end{cases}\]
Furthermore, for any $d \in \D$, as $d=\tau_0(d)q_0+\ldots
+\tau_k(d)q_k$, we have
$c_r^*dc_s=c_r^*c_r\tau_r(d)$ if $r=s$ and 0 otherwise.
Hence we may restrict the above sum
to $s_1=r_2,\;s_2=r_3,\;\ldots,\;s_{l-1}=r_l,\;s_l=r_1$.
Then, setting
\begin{equation}\label{eq:di}
d=\sum_{r=1}^k \tau_r(c_rec_r^*)q_r
\end{equation}
and applying the identity $q_rb_{rs}q_s=b_{rs}$,
\begin{equation}\label{eq:kappaHw}
\kappa_l^\cC(ae,\ldots,ae,a)
=\sum_{r_1,\ldots,r_l=1}^k c_{r_1}^*c_{r_1}\tau_{r_1}\left(
\kappa_l^\D(b_{r_1r_2}d,\;\ldots,\;b_{r_{l-1}r_l}d,\;
b_{r_lr_1})\right).
\end{equation}

On the other hand, similar arguments yield
\begin{align*}
&\kappa_l^\D(bd,\ldots,bd,b)\\
&=\sum_{r_1,s_1,\ldots,r_l,s_l=1}^k
\kappa_l^\D(b_{r_1s_1}d,\;\ldots,\;b_{r_{l-1}s_{l-1}}d,\;b_{r_ls_l})\\
&=\sum_{r_1,s_1,\ldots,r_l,s_l=1}^k
q_{r_1}\kappa_l^\D(b_{r_1s_1}q_{s_1}dq_{r_2},\;\ldots,\;
b_{r_{l-1}s_{l-1}}q_{s_{l-1}}dq_{r_l},\;b_{r_ls_l})\,q_{s_l}\\
&=\sum_{r_1,\ldots,r_l=1}^k q_{r_1}\kappa_l^\D(b_{r_1r_2}d,\;\ldots,\;
b_{r_{l-1}r_l}d,\;b_{r_lr_1}).
\end{align*}
Comparing with (\ref{eq:kappaHw}), $\kappa_l^{\cC}(ae,\ldots,ae,a)
=\sum_{r=1}^k
c_r^*c_r\tau_r\left(\kappa_l^\D(bd,\ldots,bd,b)\right)$.
Summing over $l$ and recalling (\ref{eq:Rtransform}), for $\|e\|$ sufficiently
small,
\[\cR^\cC_a(e)=\sum_{l \geq 1}\sum_{r=1}^k
c_r^*c_r\tau_r\left(\kappa_l^\D(bd,\ldots,bd,b)\right).\]
Noting that $\|d\| \leq \sum_{s=1}^k \|c_s\|^2\|e\|$ and applying
(\ref{eq:cumulantbound}), we may exchange the order of
summations on the right and move the summation over $l$ inside
$\tau_r$ by linearity and norm-continuity of $\tau$, yielding the desired
result.
\end{proof}

We now perform the desired computation of the Stieltjes transform of $w$.
\begin{lemma}\label{lemma:cauchycomputation}
Under the conditions of Theorem \ref{thm:Wdistribution}, let
$(\A,\tau,p_0,\ldots,p_{2k})$ and
$f_{rs},g_r,h_r$ be as in Lemma \ref{lemma:freeconstruction},
and let $w=\sum_{r,s=1}^k h_r^*g_r^*f_{rs}g_sh_s$. Then for
a constant $C_0>0$, defining $\mathbb{D}:=\{z \in \C^+:|z|>C_0\}$,
there exist analytic functions $a_1,\ldots,a_k:\mathbb{D} \to \C^+ \cup \{0\}$
and $b_1,\ldots,b_k:\mathbb{D} \to \C$ that satisfy, for every
$z \in \mathbb{D}$ and for $m_0(z)=\tau_0((w-z)^{-1})$,
the equations (\ref{eq:Warecursion}--\ref{eq:Wm0}).
\end{lemma}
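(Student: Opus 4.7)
The plan is to compute the $\cC$-valued $\cR$-transform of $w$ via two applications of Lemma~\ref{lemma:freecompute}, invert using Proposition~\ref{prop:GRrelation}, and then apply $\F^\D$ to extract the scalar Stieltjes transform in the form of (\ref{eq:Warecursion}--\ref{eq:Wm0}). Set $\cC := \langle \D, \{h_r\}\rangle_{W^*}$ and $\B := \langle \D, \{f_{rs}\}, \{g_r\}\rangle_{W^*}$; these are $\D$-free by Lemma~\ref{lemma:freeconstruction}(b) together with the definition of $\D$-freeness. Writing $w = \sum_{r,s=1}^k h_r^* v_{rs} h_s$ with $v_{rs} := g_r^* f_{rs} g_s \in \B$, a first application of Lemma~\ref{lemma:freecompute} (with $c_r = h_r$, $b_{rs} = v_{rs}$) gives, for $e \in \cC$ of sufficiently small norm,
\[\cR^\cC_w(e) \;=\; \sum_{r=1}^k h_r^* h_r\;\tau_r\bigl(\cR^\D_v(d_e)\bigr), \qquad d_e \;:=\; \sum_{s=1}^k \tau_s(h_s e h_s^*)\,p_s, \qquad v \;:=\; \sum_{r,s} v_{rs}.\]

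Next, I would compute $\tau_{k+r}(\cR^\D_v(d))$ for $d \in \D$. Using the $\D$-freeness of $\langle\D,\{f_{rs}\}\rangle_{W^*}$ from $\langle\D,g_1,\ldots,g_k\rangle_{W^*}$ (Lemma~\ref{lemma:freeconstruction}(b)) together with the explicit Marcenko-Pastur $\D$-law of each $g_r^*g_r$ (condition~4 of the free-model construction), a parallel moment--cumulant computation---now exploiting that the $g_r$'s contribute only through their known Marcenko-Pastur $\cR$-transform---yields the closed form
\[\tau_{k+r}\bigl(\cR^\D_v(d)\bigr) \;=\; -\tau_{k+r}\bigl([1 + f\,D(d_1,\ldots,d_k)]^{-1} f\bigr), \qquad f \;:=\; \sum_{r,s} f_{rs},\]
for $d = \sum_s d_s p_s$. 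By condition~3 equating the $\D$-laws of $\{f_{rs}\}$ and $\{\tilde F_{r,s}\}$, this equals $-n_r^{-1}\Tr_r([\Id_{n_+}+FD(\cdot)]^{-1}F)$, the right-hand side of~(\ref{eq:Wbrecursion}).

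Substituting $b = z\cdot 1_\A$ into $G^\cC_w(b) = (b-\cR^\cC_w(G^\cC_w(b)))^{-1}$ and noting that $(z-w)^{-1} = \tilde\psi + z^{-1}(1-p_0)$ with $\tilde\psi := (zp_0 - w)^{-1} \in p_0\A p_0$, the nontrivial content lies in $\F^\cC(\tilde\psi)$. Adopting the ansatz
\[-\F^\cC(\tilde\psi) \;=\; \bigl(zp_0 + \textstyle\sum_s b_s(z)\,h_s^*h_s\bigr)^{-1}_{p_0\A p_0}\]
and matching the coefficient of each $h_r^*h_r$ on the two sides of the inversion---via the first-step formula for $\cR^\cC_w$---gives~(\ref{eq:Wbrecursion}), with $a_s(z) := -(p/n_s)\tau_0(\F^\cC(\tilde\psi)\,h_s^*h_s)$ emerging as the argument of $\cR^\D_v$. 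Applying $\F^\D$ to the ansatz and using condition~3 to translate $\tau_0$ into $p^{-1}\Tr$ recovers~(\ref{eq:Warecursion}) and~(\ref{eq:Wm0}). Analyticity of $a_r, b_r$ on $\mathbb{D} = \{z \in \C^+ : |z|>C_0\}$ for any $C_0 > \|w\|$ follows from the norm-convergent series representation of $G^\cC_w$ at infinity (cf.~(\ref{eq:Cauchybound})), and the inclusion $a_s(z) \in \C^+\cup\{0\}$ holds because $a_s(z)$ is, up to a positive factor, the Cauchy transform of a positive spectral measure evaluated at $z \in \C^+$.

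The main obstacle is the second step---turning the abstract $\D$-valued $\cR$-transform $\cR^\D_v$ into the closed-form resolvent $-[1+fD]^{-1}f$---which requires careful combinatorial bookkeeping combining the $\D$-freeness of $\{f_{rs}\}$ from $\{g_r\}$ with the precise Marcenko-Pastur cumulant structure of each $g_r$. The simplified sample-covariance case treated in Appendix~\ref{appendix:march-past-case} illustrates the essential algebraic identity needed for this reduction.
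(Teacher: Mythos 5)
Your high-level plan is the same as the paper's: apply Lemma~\ref{lemma:freecompute} twice to strip off first the $h_r$'s and then the $g_r$'s, invert with Proposition~\ref{prop:GRrelation}, and project down to $\D$ to translate into matrix traces. The first application (with $\cC = \H$ and $b_{rs} = v_{rs} = g_r^*f_{rs}g_s$) is set up correctly and matches the paper's Step~1. The issue is that you never carry out the second application. You assert
\[\tau_{k+r}\bigl(\cR^\D_v(d)\bigr) \;=\; -\tau_{k+r}\bigl([1 + f\,D(d_1,\ldots,d_k)]^{-1} f\bigr)\]
and say it follows from ``a parallel moment-cumulant computation,'' but this formula is not correct as stated, and it is precisely where the work lives. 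Two concrete problems: (i) since $(p_1+\cdots+p_k)v(p_1+\cdots+p_k) = v$, the $\cR$-transform $\cR^\D_v(d)$ is supported on $\langle p_1,\ldots,p_k\rangle$, so $\tau_{k+r}(\cR^\D_v(d)) = 0$; the nontrivial quantity is $\tau_r(\cR^\D_v(d))$. (ii) Even with the index fixed, the argument $D(d_1,\ldots,d_k)$ in the resolvent does not equal the argument of $\cR^\D_v$: the paper's Step~3 shows that the correct diagonal is $D_s = (m_s/n_s)\,\alpha_s$, not $\alpha_s$. This $m_s/n_s$ rescaling is the fingerprint of the Marcenko--Pastur structure of $g_s^*g_s$ passing between $\tau_s$ and $\tau_{k+s}$, and it does not fall out of a symbolic moment--cumulant manipulation.

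The reason this cannot be a ``parallel'' computation is that $v = \sum g_r^*f_{rs}g_s$ mixes two $\D$-free families, so one needs a genuinely second passage: the paper applies Lemma~\ref{lemma:freecompute} again with $\cC = \G$, $\B = \mathcal{F}$, rewrites in terms of the $\G$- and $\D$-valued Cauchy transforms, substitutes the explicit $\cR$-transform $\cR_{\nu_\lambda}(z) = (1-\lambda z)^{-1}$ of the Marcenko--Pastur law for each $g_r^*g_r$, and then performs nontrivial algebra (including a Woodbury identity) to reach $n_r^{-1}\Tr_r((\Id_{n_+}-FD)^{-1}F)$. You yourself identify this as ``the main obstacle'' requiring ``careful combinatorial bookkeeping''; that is an accurate self-assessment---it is the content of the lemma, and the proposal leaves it undone. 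A smaller issue: in your ansatz paragraph the sign is off ($\F^\cC(\tilde\psi)$, not $-\F^\cC(\tilde\psi)$, equals $(zp_0 + \sum_s b_s h_s^*h_s)^{-1}$ in $p_0\A p_0$), and the claim that coefficient-matching on your ansatz yields~(\ref{eq:Wbrecursion}) cannot be right as written, since no $F$-dependence has yet been introduced at that stage of your argument. The analyticity and $\C^+\cup\{0\}$ discussion at the end is sound.
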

\begin{proof}
If $H_r=0$ for some $r$, then we may 
set $a_r \equiv 0$, define $b_r$ by (\ref{eq:Wbrecursion}),
and reduce to the case $k-1$. Hence, it suffices to consider
$H_r \neq 0$ for all $r$. 

Define the von Neumann sub-algebras
$\D=\langle p_r:0 \leq r \leq 2k \rangle$,
$\mathcal{F}=\langle \D,\{f_{rs}\}\rangle_{W^*}$,
$\G=\langle \D,\{g_r\}\rangle_{W^*}$,
and $\H=\langle \D,\{h_r\}\rangle_{W^*}$.
Denote by $\F^\D$, $\cR^\D$, and $G^\D$ the $\tau$-invariant
conditional expectation onto $\D$ and the $\D$-valued $\cR$-transform and
Cauchy transform, and similarly for $\mathcal{F}$, $\G$, and $\H$.

We first work algebraically (Steps 1--3), assuming that
arguments $b$ to Cauchy transforms are invertible with $\|b^{-1}\|$
sufficiently small, arguments $b$ to $\cR$-transforms have
$\|b\|$ sufficiently small,
and applying series expansions for $(b-a)^{-1}$. We will check
that these assumptions hold and also establish the desired analyticity
properties in Step 4.\\

\noindent {\bf Step 1:}
We first relate the $\D$-valued Cauchy transform of $w$ to that of
$v:=\sum_{r,s=1}^k g_r^*f_{rs}g_s$. We apply Lemma \ref{lemma:freecompute} with
$q_0=p_0+\sum_{r=k+1}^{2k} p_r$,
$q_r=p_r$ for $r=1,\ldots,k$, $\cC=\H$, and $\B=\langle \mathcal{F},\G \rangle$.
Then for $c \in \H$,
\begin{equation}\label{eq:RHwc}
\cR_w^{\H}(c)=\sum_{r=1}^k h_r^*h_r\tau_r\left(\cR_v^\D\left(
\sum_{s=1}^k p_s\tau_s(h_sch_s^*)\right)\right).
\end{equation}

To rewrite this using Cauchy transforms, for invertible
$d \in \D$ and each $r=1,\ldots,k$, define
\begin{align}
\alpha_r(d)&:=\tau_r\left(h_rG_w^{\H}(d)h_r^*\right),
\label{eq:alphar1}\\
\beta_r(d)&:=
\tau_r\left(\cR_v^\D\left(\sum_{s=1}^k p_s\alpha_s(d)\right)\right).
\label{eq:betar1}
\end{align}
Then (\ref{eq:GRrelation2}) and (\ref{eq:RHwc}) with $c=G_w^\H(d)$ imply
\begin{equation}\label{eq:GwH}
G_w^{\H}(d)
=\left(d-\cR_w^{\H}\left(G_w^{\H}(d)\right)\right)^{-1}
=\left(d-\sum_{r=1}^k h_r^*h_r\beta_r(d)\right)^{-1}.
\end{equation}
Projecting down to $\D$ using (\ref{eq:Gprojection}) yields
\begin{equation}\label{eq:GwD}
G_w^\D(d)=\F^\D\left(\left(d-\sum_{r=1}^k h_r^*h_r\beta_r(d)\right)^{-1}\right).
\end{equation}
Applying (\ref{eq:GwH}) to (\ref{eq:alphar1}),
\begin{equation}\label{eq:alphar2}
\alpha_r(d)=\tau_r\left(h_r\left(d-\sum_{s=1}^k
h_s^*h_s\beta_s(d)\right)^{-1}h_r^*\right).
\end{equation}
Noting that $(p_1+\ldots+p_k)v(p_1+\ldots+p_k)=v$, (\ref{eq:Rtransform}) and
(\ref{eq:kappaid1}) imply $\cR_v^\D(d) \in \langle
p_1,\ldots,p_k \rangle$ for any $d \in \D$, so we may write (\ref{eq:betar1}) as
\[\cR_v^\D\left(\sum_{r=1}^k p_r\alpha_r(d)\right)=\sum_{r=1}^k p_r\beta_r(d).\]
For $r=0$ and $r \in \{k+1,\ldots,2k\}$, set $\beta_r(d)=0$ and define
$\alpha_r(d)$ arbitrarily, say by $\alpha_r(d)=\|d^{-1}\|$.
Since $vp_r=p_rv=0$ if $r=0$ or $r \in \{k+1,\ldots,2k\}$,
applying (\ref{eq:Rtransform}) and multi-linearity of $\kappa_l^\D$, we may
rewrite the above as
\[\cR_v^\D\left(\sum_{r=0}^{2k} p_r\alpha_r(d)\right)=\sum_{r=0}^{2k}
p_r\beta_r(d).\]
Applying (\ref{eq:GRrelation}) with $b=\sum_{r=0}^{2k} p_r\alpha_r(d)$, we get
\begin{equation}\label{eq:GvD}
G^{\D}_v\left(\sum_{r=0}^{2k}p_r\left(\frac{1}{\alpha_r(d)}
+\beta_r(d)\right)\right)=\sum_{r=0}^{2k} p_r\alpha_r(d).
\end{equation}
The relation between $G^\D_w$ and $G^\D_v$ is given by
(\ref{eq:GwD}), (\ref{eq:alphar2}), and (\ref{eq:GvD}).\\

\noindent{\bf Step 2:}
Next, we relate the $\D$-valued Cauchy transforms of $v$ and
$u:=\sum_{r,s=1}^k f_{rs}$. 
We apply Lemma \ref{lemma:freecompute} with $q_0=\sum_{r=0}^k p_r$,
$q_r=p_{r+k}$ for $r=1,\ldots,k$, $\cC=\G$, and $\B=\mathcal{F}$.
Then for $c \in \G$,
\begin{equation}\label{eq:RvG}
\cR_v^{\G}(c)=\sum_{r=1}^k g_r^*g_r\tau_{r+k}\left(\cR_u^\D\left(
\sum_{s=1}^k p_{s+k}\tau_{s+k}(g_scg_s^*)\right)\right).
\end{equation}

To rewrite this using Cauchy transforms, for invertible $d \in \D$
and all $r=1,\ldots,k$, define
\begin{align}
\gamma_{r+k}(d)&=\tau_{r+k}(g_rG_v^{\G}(d)g_r^*),\label{eq:gammar1}\\
\delta_{r+k}(d)&=\tau_{r+k}\left(\cR_u^\D\left(\sum_{s=1}^k
p_{s+k}\gamma_{s+k}(d)\right)\right).\label{eq:deltar1}
\end{align}
As in Step 1, for $r=0,\ldots,k$ let us also define $\delta_r(d)=0$ and
$\gamma_r(d)=\|d^{-1}\|$. Then, noting
$(p_{k+1}+\ldots+p_{2k})u(p_{k+1}+\ldots+p_{2k})=u$, the same arguments as in
Step 1 yield the analogous identities
\begin{align}
G_v^{\D}(d)&=\F^\D\left(\left(d-\sum_{s=1}^k
g_s^*g_s\delta_{s+k}(d)\right)^{-1}\right),\label{eq:GvDpre}\\
\gamma_{r+k}(d)&=\tau_{r+k}\left(g_r\left(d-\sum_{s=1}^k
g_s^*g_s\delta_{s+k}(d)\right)^{-1}g_r^*\right),\label{eq:gammapre}
\end{align}
\begin{equation}
G^{\D}_u\left(\sum_{r=0}^{2k}p_r\left(\frac{1}{\gamma_r(d)}
+\delta_r(d)\right)\right)=\sum_{r=0}^{2k} p_r\gamma_r(d).\label{eq:GuD}
\end{equation}

As $g_r^*g_r$ has moments given by (\ref{eq:gG}), we may write
(\ref{eq:GvDpre}) and (\ref{eq:gammapre}) explicitly:
Denote $d=d_0p_0+\ldots+d_{2k}p_{2k}$ for $d_0,\ldots,d_{2k} \in \C$. As
$d$ is invertible, we have $d^{-1}=d_0^{-1}p_0+\ldots+d_{2k}^{-1}p_{2k}$.
For any $x \in \A$ that commutes with $\D$,
\[(d-x)^{-1}=\sum_{l \geq 0} d^{-1}(xd^{-1})^l=\sum_{l \geq 0} x^ld^{-l-1}.\]
So for $r=1,\ldots,k$, noting that $p_r=p_r^2$ and that $\D$ commutes with
itself,
\begin{align*}
\tau_r\left((d-x)^{-1}\right)&=\frac{N}{m_r}\sum_{l \geq 0}
\tau\left(p_rx^ld^{-l-1}p_r\right)\\
&=\frac{N}{m_r}\sum_{l \geq 0}
\tau\left((p_rx^lp_r)(p_rd^{-1}p_r)^{l+1}\right)
=\sum_{l \geq 0}
\frac{\tau_r(x^l)}{d_r^{l+1}}.
\end{align*}
Noting that $g_s^*g_s$ commutes with $\D$, applying the above to
(\ref{eq:GvDpre}) with $x=\sum_{s=1}^k g_s^*g_s$, and recalling (\ref{eq:gG}),
\begin{align}
\tau_r(G_v^{\D}(d))&=\sum_{l \geq 0}
\frac{\tau_r((g_r^*g_r)^l)\delta_{r+k}(d)^l}{d_r^{l+1}}\nonumber\\
&=\int \sum_{l \geq 0} \frac{x^l\delta_{r+k}(d)^l}{d_r^{l+1}}
\nu_{m_r/n_r}(x)dx\nonumber\\
&=\int \frac{1}{d_r-x\delta_{r+k}(d)}\nu_{m_r/n_r}(x)dx\nonumber\\
&=\frac{1}{\delta_{r+k}(d)}G_{\nu_{m_r/n_r}}^\C(d_r/\delta_{r+k}(d)),
\label{eq:taurGv}
\end{align}
where $G_{\nu_{m_r/n_r}}^\C$ is the Cauchy
transform of the Marcenko-Pastur law $\nu_{m_r/n_r}$.

Similarly, we may write (\ref{eq:gammapre}) as
\begin{align}
\gamma_{r+k}(d)&=\frac{m_r}{n_r}\tau_r\left(\left(d-\sum_{s=1}^k
g_s^*g_s\delta_{s+k}(d)\right)^{-1}g_r^*g_r\right)\nonumber\\
&=\frac{m_r}{n_r} \int \frac{x}{d_r-x\delta_{r+k}(d)}\nu_{m_r/n_r}(x)dx
\nonumber\\
&=\frac{m_r}{n_r}\left(-\frac{1}{\delta_{r+k}(d)}+\frac{d_r}{\delta_{r+k}(d)^2}
G_{\nu_{m_r/n_r}}^\C(d_r/\delta_{r+k}(d))\right)\nonumber\\
&=\frac{m_r}{n_r}\left(-\frac{1}{\delta_{r+k}(d)}+\frac{d_r}{\delta_{r+k}(d)}
\tau_r(G_v^\D(d))\right),\label{eq:gammar2}
\end{align}
where the first equality applies the cyclic property of $\tau$
and the definitions of $\tau_{r+k}$ and $\tau_r$, the second applies
(\ref{eq:gG}) upon passing to a power series and back as above,
the third applies the definition of the Cauchy transform, and the last applies
(\ref{eq:taurGv}). The 
relation between $G_v^\D$ and $G_u^\D$ is given by (\ref{eq:taurGv}),
(\ref{eq:gammar2}), and (\ref{eq:GuD}).\\

\noindent{\bf Step 3:}
We compute $m_0(z)$ for $z \in \C^+$ using
(\ref{eq:GwD}), (\ref{eq:alphar2}), (\ref{eq:GvD}), (\ref{eq:taurGv}),
(\ref{eq:gammar2}), and (\ref{eq:GuD}). Fixing $z \in \C^+$, let us write
\begin{align*}
\alpha_r=\alpha_r(z),\;\;\beta_r=\beta_r(z),\;\;
d_r=\frac{1}{\alpha_r}+\beta_r,\;\;d=\sum_{r=0}^{2k} d_rp_r,\\
\gamma_r=\gamma_r(d),\;\;\delta_r=\delta_r(d),\;\;
e_r=\frac{1}{\gamma_r}+\delta_r,\;\;e=\sum_{r=0}^{2k} e_rp_r.
\end{align*}

Applying (\ref{eq:GwD}) and projecting down to $\C$,
\[m_0(z)=-\tau_0\left(\left(z-\sum_{r=1}^k
h_r^*h_r\beta_r\right)^{-1}\right).\]
Note that $h_r^*h_r$ commutes with $\D$ and $p_0h_r^*h_rp_0=h_r^*h_r$ for each
$r=1,\ldots,k$. Then, passing to a power series as in Step 2, and
then applying (\ref{eq:hH}) and the spectral calculus,
\begin{align}
m_0(z)&=-\sum_{l \geq 0}z^{-(l+1)}
\tau_0\left(\left(\sum_{r=1}^k h_r^*h_r \beta_r\right)^l\right)\nonumber\\
&=-\sum_{l \geq 0}z^{-(l+1)}\frac{1}{p}\Tr \left(\left(\sum_{r=1}^k
\beta_rH_r^*H_r\right)^l\right)\nonumber\\
&=-\frac{1}{p}\Tr \left(z\Id_p-\sum_{r=1}^k \beta_rH_r^*H_r\right)^{-1}.
\label{eq:m0zfinal}
\end{align}
Similarly, (\ref{eq:alphar2}) implies for each $r=1,\ldots,k$
\begin{equation}\label{eq:alpharfinal}
\alpha_r=\frac{1}{m_r}\Tr\left(\left(z\Id_p-\sum_{s=1}^k \beta_sH_s^*H_s
\right)^{-1}H_r^*H_r\right).
\end{equation}

Now applying (\ref{eq:taurGv}) and recalling (\ref{eq:GvD}) and the definition
of $d_r$, for each $r=1,\ldots,k$,
\[\alpha_r=\tau_r(G_v^{\D}(d))
=\frac{1}{\delta_{r+k}}
G_{\nu_{m_r/n_r}}^\C\left(\frac{1}{\alpha_r\delta_{r+k}}
+\frac{\beta_r}{\delta_{r+k}}\right).\]
Applying (\ref{eq:GRrelation2}) and the Marcenko-Pastur $\cR$-transform
$\cR_{\nu_\lambda}^\C(z)=(1-\lambda z)^{-1}$, this is rewritten as
\begin{equation}\label{eq:alphabeta}
\frac{\beta_r}{\delta_{r+k}}
=\cR_{\nu_{m_r/n_r}}^\C(\alpha_r\delta_{r+k})
=\frac{n_r}{n_r-m_r\alpha_r\delta_{r+k}}.
\end{equation}
By (\ref{eq:gammar2}) and (\ref{eq:GvD}),
\begin{equation}\label{eq:gammarfinal}
\gamma_{r+k}=\frac{m_r}{n_r}\frac{\alpha_r\beta_r}{\delta_{r+k}}.
\end{equation}
We derive two consequences of (\ref{eq:alphabeta}) and (\ref{eq:gammarfinal}).
First, substituting for $\beta_r$ in (\ref{eq:gammarfinal}) using
(\ref{eq:alphabeta}) and recalling the definition of $e_{r+k}$ yields
\begin{equation}\label{eq:er}
e_{r+k}=\frac{n_r}{m_r\alpha_r}.
\end{equation}
Second, rearranging (\ref{eq:alphabeta}), we get
$\beta_r/\delta_{r+k}=1+m_r\alpha_r\beta_r/n_r$.
Inserting into (\ref{eq:gammarfinal}) yields this time
\begin{equation}\label{eq:lastbetar}
\beta_r=\frac{n_r}{m_r^2\alpha_r^2}(n_r\gamma_{r+k}-m_r\alpha_r).
\end{equation}

By (\ref{eq:GuD}), for each $r=1,\ldots,k$,
\[\gamma_{r+k}=\tau_{r+k}(G_u^\D(e))=\tau_{r+k}((e-u)^{-1}).\]
Passing to a power series for $(e-u)^{-1}$, applying (\ref{eq:fF}),
and passing back,
\begin{align}
\gamma_{r+k}&=\frac{1}{n_r}\Tr_{r+k}\left(
\diag\left(e_0\Id_{p},\ldots,e_{2k}\Id_{n_k}\right)-\tilde{F}\right)^{-1}
\nonumber\\
&=\frac{1}{n_r}\Tr_r\left(
\diag\left(e_{k+1}\Id_{n_1},\ldots,e_{2k}\Id_{n_k}\right)-F\right)^{-1}
\nonumber\\
&=\frac{1}{n_r}\Tr_r(D^{-1}-F)^{-1}\label{eq:gammadelta}
\end{align}
where the last line applies (\ref{eq:er}) and sets
$D=\diag(D_1\Id_{n_1},\ldots,D_k\Id_{n_k})$
for $D_r=m_r\alpha_r/n_r$. Noting $\Tr_r D=m_r\alpha_r$,
(\ref{eq:lastbetar}) yields
\begin{align}
\beta_r&=\frac{1}{n_rD_r^2}\Tr_r[(D^{-1}-F)^{-1}-D]\nonumber\\
&=\frac{1}{n_r}\Tr_r[(F^{-1}-D)^{-1}]
=\frac{1}{n_r}\Tr_r((\Id_{n_+}-FD)^{-1}F)\label{eq:betarfinal}
\end{align}
where we used the Woodbury identity and $\Tr_r DAD=D_r^2\Tr A$.
(These equalities hold when $F$ is invertible, and hence for all $F$ by
continuity.) Setting $a_r=-m_r\alpha_r/n_r$ and $b_r=-\beta_r$,
we obtain (\ref{eq:Warecursion}), (\ref{eq:Wbrecursion}), and (\ref{eq:Wm0})
from (\ref{eq:m0zfinal}), (\ref{eq:alpharfinal}), and (\ref{eq:betarfinal}).\\

\noindent {\bf Step 4:}
Finally, we verify the validity of the preceding calculations
when $z \in \mathbb{D}:=\{z \in \C^+:|z|>C_0\}$ and
$C_0>0$ is sufficiently large. Call a scalar quantity
$u:=u(N,z)$ ``uniformly bounded'' if $|u|<C$ for all $z \in \mathbb{D}$,
all $N$, and some constants $C_0,C>0$. Call $u$ ``uniformly small''
if for any constant $c>0$ there exists $C_0>0$ such that $|u|<c$ for all
$z \in \mathbb{D}$ and all $N$.

As $\|w\| \leq C$ by Lemma \ref{lemma:freeconstruction}(c),
$c=G_w^\H(z)$ is well-defined by the convergent series
(\ref{eq:Cauchytransform}) for $z \in \mathbb{D}$.
Furthermore by (\ref{eq:Cauchybound}), $\|c\|$ is uniformly small,
so we may apply (\ref{eq:RHwc}).
$\alpha_r(z)$ as defined by (\ref{eq:alphar1}) satisfies
\begin{align*}
\alpha_r(z)&=\tau_r\left(h_r\sum_{l=0}^\infty
\F^{\H}\left(z^{-1}(wz^{-1})^l\right)h_r^*\right)\\
&=\sum_{l=0}^\infty
z^{-(l+1)}\tau(p_r)^{-1}\tau\left(h_r\F^{\H}(w^l)h_r^*\right)
=\sum_{l=0}^\infty z^{-(l+1)}\frac{N}{m_r}\tau(w^lh_r^*h_r)
\end{align*}
for $z \in \mathbb{D}$. Since $|\tau(w^lh_r^*h_r)| \leq \|w\|^l\|h_r\|^2$,
$\alpha_r$ defines an analytic function on $\mathbb{D}$
such that $\alpha_r(z) \sim (zm_r)^{-1}\Tr(H_r^*H_r)$
as $|z| \to \infty$. In particular, since $H_r$ is non-zero by our initial
assumption, $\alpha_r(z) \neq 0$ and
$\Im \alpha_r(z)<0$ for $z \in \mathbb{D}$. This verifies that
$a_r(z)=-m_r\alpha_r(z)/n_r \in \C^+$ and $a_r$ is analytic on
$\mathbb{D}$. Furthermore, $\alpha_r$ is uniformly small for each
$r$. Then applying (\ref{eq:Rtransform}),
multi-linearity of $\kappa_l$, and (\ref{eq:cumulantbound}), it is verified
that $\beta_r(z)$ defined by (\ref{eq:betar1}) is uniformly bounded
and analytic on $\mathbb{D}$. So $b_r(z)=-\beta_r(z)$ is analytic on
$\mathbb{D}$.

As $\beta_r$ is uniformly bounded, the formal series leading
to (\ref{eq:m0zfinal}) and (\ref{eq:alpharfinal}) are convergent for $z \in
\mathbb{D}$. Furthermore, $d_r=1/\alpha_r+\beta_r$ is well-defined
as $\alpha_r \neq 0$, and $\|d^{-1}\|$ is uniformly small.
Then $c=G_v^\G(d)$ is well-defined by (\ref{eq:Cauchytransform})
and also uniformly small, so we may apply (\ref{eq:RvG}). By the same
arguments as above,
$\gamma_{r+k}(d)$ as defined by (\ref{eq:gammar1}) is non-zero and uniformly
small, and $\delta_{r+k}(d)$ as defined by (\ref{eq:deltar1}) is uniformly
bounded. Then the formal series leading to (\ref{eq:taurGv}) and
(\ref{eq:gammar2}) are convergent for $z \in \mathbb{D}$. Furthermore,
$e_r=1/\gamma_r+\delta_r$ is well-defined and $\|e^{-1}\|$ is uniformly small,
so the formal series leading to (\ref{eq:gammadelta}) is convergent for $z \in
\mathbb{D}$. This verifies the validity of the preceding calculations
and concludes the proof.
\end{proof}

To finish the proof of Theorem \ref{thm:Wdistribution}, we show using a
contractive mapping argument similar to \cite{couilletetal,dupuyloubaton}
that (\ref{eq:Warecursion}--\ref{eq:Wbrecursion})
have a unique solution in the stated domains, which is the limit
of the procedure in Theorem \ref{thm:algorithm}. The result then follows from
Lemma \ref{lemma:cauchycomputation} and Corollary \ref{cor:stieltjesapprox}.
These arguments are contained in Appendix \ref{appendix:fixedpoint}.

\appendix

\section{Details for classification designs}\label{appendix:applications}
We discuss the details of Section \ref{sec:applications}
and prove Corollaries \ref{cor:oneway}, \ref{cor:nested}, and
\ref{cor:crossedtwoway}. We denote by $\e_l \in \R^l$ the vector of all 1's
and $\J_l=\e_l\e_l^T \in \R^{l \times l}$ the matrix of all 1's.
For $A \in \R^{l \times m}$ and $B \in \R^{l' \times m'}$, we use standard
tensor product notation
\[A \otimes B=\begin{pmatrix} a_{11}B & \cdots & a_{1m}B \\
\vdots & \ddots & \vdots \\
a_{l1}B & \cdots & a_{lm}B \end{pmatrix} \in \R^{ll' \times mm'}.\]
For $m=l_1+\ldots+l_k$ and the direct sum decomposition
$\R^m=\R^{l_1} \oplus \ldots \oplus \R^{l_k}$, we call
\[A=\begin{pmatrix} A_{11} & \cdots & A_{1k} \\
\vdots & \ddots & \vdots \\
A_{k1} & \cdots & A_{kk} \end{pmatrix} \in \R^{m \times m},\qquad
A_{ij}\in \R^{l_i \times l_j} \text{ for all }i,j \in \{1,\ldots,k\}\]
the corresponding block matrix decomposition.
For subspaces $S_1 \subset S_2$ of $\R^m$, we denote by $S_2 \ominus S_1$ the
orthogonal complement of $S_1$ in $S_2$.

\subsection{One-way classification}\label{subapp:oneway}
The model (\ref{eq:onewaymodel}) may be written in the form 
(\ref{eq:mixedmodel}) with $k=2$ upon identifying $X\beta=\e_n \mu^T$,
stacking the $\alpha_i$'s and $\eps_{i,j}$'s as the rows of $\alpha_1 \in \R^{I
\times p}$ and $\alpha_2 \in \R^{n \times p}$, and setting
\[U_1=\begin{pmatrix} \e_{J_1} & 0 & \cdots & 0 \\
0 & \e_{J_2} & \cdots & 0 \\
\vdots & \vdots & \ddots & \vdots \\
0 & 0 & \cdots & \e_{J_I} \end{pmatrix},\]
$U_2=\Id_n$, and $I_1=I$ and $I_2=n$.

Consider the nested subspaces $\col(\e_n) \subset \col(U_1) \subset \R_n$,
where $\col(\e_n)$ is the 1-dimensional span of $\e_n$ and
$\col(U_1)$ is the column span of $U_1$. Let
$\pi_0,\pi_1,\pi_2$ be the orthogonal projections onto $\col(\e_n)$,
$\col(U_1) \ominus \col(\e_n)$, and $\R^n \ominus \col(U_1)$.
Then the quantities $\SS_1$ and $\SS_2$ are equivalently expressed as
$\SS_1=Y^T \pi_1 Y$ and $\SS_2=Y^T \pi_2 Y$, so the
MANOVA estimators (\ref{eq:onewayestimators}) are given by
$\hat{\Sigma}_1=Y^TB_1Y$ and $\hat{\Sigma}_2=Y^TB_2Y$ for
\[B_1=\frac{1}{K}\left(\frac{1}{I-1}\pi_1-\frac{1}{n-I}\pi_2\right),
\qquad B_2=\frac{1}{n-I}\pi_2.\]

To study $\hat{\Sigma}_1$, let us consider instead the matrix
$\check{\Sigma}_1=Y^T\check{B}_1Y$ for
\[\check{B}_1
=\frac{1}{K}\left(\frac{1}{I}(\pi_0+\pi_1)-\frac{1}{n-I}\pi_2\right).\]
Note that the given assumptions imply $K \geq c$ and $\|U_1\| \leq C$ for
constants $C,c>0$. Then
\[\left\|\left(\frac{1}{K(I-1)}-\frac{1}{KI}\right)Y^T\pi_1Y\right\|
\leq \frac{1}{KI(I-1)}\|Y\|^2 \leq C/N\]
a.s.\ for large $N$, so $\hat{\Sigma}_1-\check{\Sigma}_1$
is the sum of a rank-one matrix and a
matrix of norm at most $C/N$. Then we have
$\mu_{\hat{\Sigma}_1}-\mu_{\check{\Sigma}_1} \to 0$ a.s.

We apply Theorem \ref{thm:bulkdistribution} to $\check{\Sigma}_1$:
Let us set
\[E=\diag(J_1,\ldots,J_I)=U_1^TU_1, \qquad V_1=U_1E^{-1/2}.\]
Then $\pi_0+\pi_1=V_1V_1^T$. We may complete the basis and write
$\pi_2=V_2V_2^T$ for
$V_2$ of size $n \times (n-I)$, so that $[V_1 \mid V_2]$ is an orthogonal
matrix of size $n$. Define the orthogonal change of basis matrix
\[Q=\begin{pmatrix} \Id_I & 0 \\ 0 & [V_1 \mid V_2] \end{pmatrix} \in
\R^{(I+n) \times (I+n)}.\]
Recall $F=U^T\check{B}_1U$ from Theorem \ref{thm:bulkdistribution},
where $U=(\sqrt{I_1}U_1 \mid \sqrt{I_2}U_2)=(\sqrt{I}V_1E^{1/2}
\mid \sqrt{n} \Id_n)$. Then $UQ=(\sqrt{I} V_1E^{1/2} \mid
\sqrt{n}V_1 \mid \sqrt{n} V_2)$ in the decomposition
$\R^{I+n}=\R^I \oplus \R^I \oplus \R^{n-I}$, so
\[M:=Q^TFQ
=\frac{1}{K}\begin{pmatrix} E& \sqrt{\frac{n}{I}}E^{1/2} & 0 \\
\sqrt{\frac{n}{I}}E^{1/2} & \frac{n}{I}\Id_I & 0 \\
0 & 0 & -\frac{n}{n-I}\Id_{n-I}
\end{pmatrix}.\]

We must compute the block traces of $(\Id+FD(a))^{-1}F$ in the decomposition
$\R^{I+n}=\R^I \oplus \R^n$, where 
$D(a)=\diag(a_1 \Id_I,a_2\Id_n)$. Note that
$Q$ preserves this decomposition as well as $D(a)$, so
\[S:=Q^T(\Id+FD(a))^{-1}FQ=(\Id+MD(a))^{-1}M.\]
Moving now to the block decomposition $\R^{I+n}=\R^{2I} \oplus \R^{n-I}$,
let us write $M=\diag(RR^T,-r_2^2\Id_{n-I})$ and
$D(a)=\diag(\Delta,a_2\Id_{n-I})$, where we set $r_0^2=1/K$, $r_1^2=n/(KI)$,
$r_2^2=n/(K(n-I))$, and
\[R=\begin{pmatrix} r_0E^{1/2} \\ r_1 \Id_I \end{pmatrix},
\qquad \Delta=\begin{pmatrix} a_1\Id_I & 0 \\ 0 & a_2\Id_I \end{pmatrix}.\]
To compute the upper-left $2I \times 2I$
block $S_{11}$ in this decomposition, we use the
Woodbury identity
\[(\Delta^{-1}+RR^T)^{-1}=\Delta-\Delta R(\Id_I+R^T\Delta R)^{-1}R^T\Delta\]
to obtain
\begin{equation}\label{eq:woodbury}
S_{11}=(\Id_{2I}+RR^T\Delta)^{-1}RR^T
=\Delta^{-1}(\Delta^{-1}+RR^T)^{-1}RR^T
=R(\Id_I+R^T\Delta R)^{-1}R^T.
\end{equation}
We compute $\Id_I+R^T\Delta R=\Id_I+a_1r_0^2E+a_2r_1^2\Id_I$,
which yields
\[S_{11}=\begin{pmatrix}
\diag\left(\frac{IJ_i}{KI+IJ_ia_1+na_2}\right)
& * \\
* & \diag\left(\frac{n}{KI+IJ_ia_1+na_2}\right) \end{pmatrix}\]
for values $*$ that we omit for brevity. The lower-right $(n-I) \times (n-I)$
block of $S$ is given by
\[S_{22}=(\Id-r_2^2a_2\Id)^{-1}(-r_2^2\Id)=-\frac{n}{K(n-I)-na_2}\Id_{n-I}.\]
As $Q$ preserves the decomposition $\R^{I+n}=\R^I \oplus \R^n$, the block
traces of $S$ in this decomposition are the same as those of
$(\Id+FD(a))^{-1}F$. This yields the formulas for $b_1$ and $b_2$
in Corollary \ref{cor:oneway}(a).

We next apply Theorem \ref{thm:bulkdistribution} for
$\hat{\Sigma}_2$: The matrix $F=U^TB_2U$ is now given by
\[F=\begin{pmatrix} 0 & 0 \\
0 & \frac{n}{n-I}\pi_2 \end{pmatrix} \in \R^{(I+n) \times (I+n)}.\]
Then in the decomposition
$\R^{I+n}=\R^I \oplus \R^I \oplus \R^{n-I}$, we have
\[Q^TFQ=\begin{pmatrix} 0 & 0 & 0 \\
0 & 0 & 0\\
0 & 0 & \frac{n}{n-I}\Id_{n-I} \end{pmatrix},\qquad
Q^T\left(\Id+FD(a)\right)^{-1}FQ
=\begin{pmatrix} 0 & 0 & 0 \\
0 & 0 & 0\\
0 & 0 & \frac{n}{n-I+na_2}\Id_{n-I} \end{pmatrix}.\]
Taking block traces, $a_1$ is irrelevant, $b_1 \equiv 0$, and $b_2$ has the form
of Corollary \ref{cor:oneway}(b).

\subsection{Balanced models}\label{subapp:balanced}
We consider models of the form (\ref{eq:mixedmodel}) given by balanced
crossed and nested classification designs. These satisfy the
following ``balanced model conditions'':
\begin{enumerate}[1.]
\item For each $r$, let $c_r=n/I_r$. Then $U_r^TU_r=c_r\Id_{I_r}$, and
$\Pi_r:=c_r^{-1}U_rU_r^T$ is an orthogonal projection onto a subspace
$S_r \subset \R^n$ of dimension $I_r$.
\item Define $S_0=\col(X)$ as the column span of $X$. Then $S_0
\subset S_r \subset S_k=\R^n$ for each $r=1,\ldots,k-1$.
\item Partially
order the subspaces $S_r$ by inclusion, $r' \preceq r$ if $S_r' \subseteq S_r$.
Let $\mathring{S}_r$ denote the orthogonal complement in $S_r$ of all $S_{r'}$
properly contained in $S_r$. Then for each $r$,
\begin{equation}\label{eq:orthosum}
S_r=\bigoplus_{r' \preceq r} \mathring{S}_{r'},
\end{equation}
where $\oplus$ denotes the orthogonal direct sum.
In particular, $\R^n=S_k=\oplus_{r=0}^k \mathring{S}_r$.
\end{enumerate}
We verify below that the models of Sections \ref{subsec:nested} and
\ref{subsec:crossed} are of this form, with the partial orderings of
$S_0,S_1,\ldots,S_k$ depicted in Figure \ref{fig:inclusionlattice}.

\begin{figure}
\includegraphics{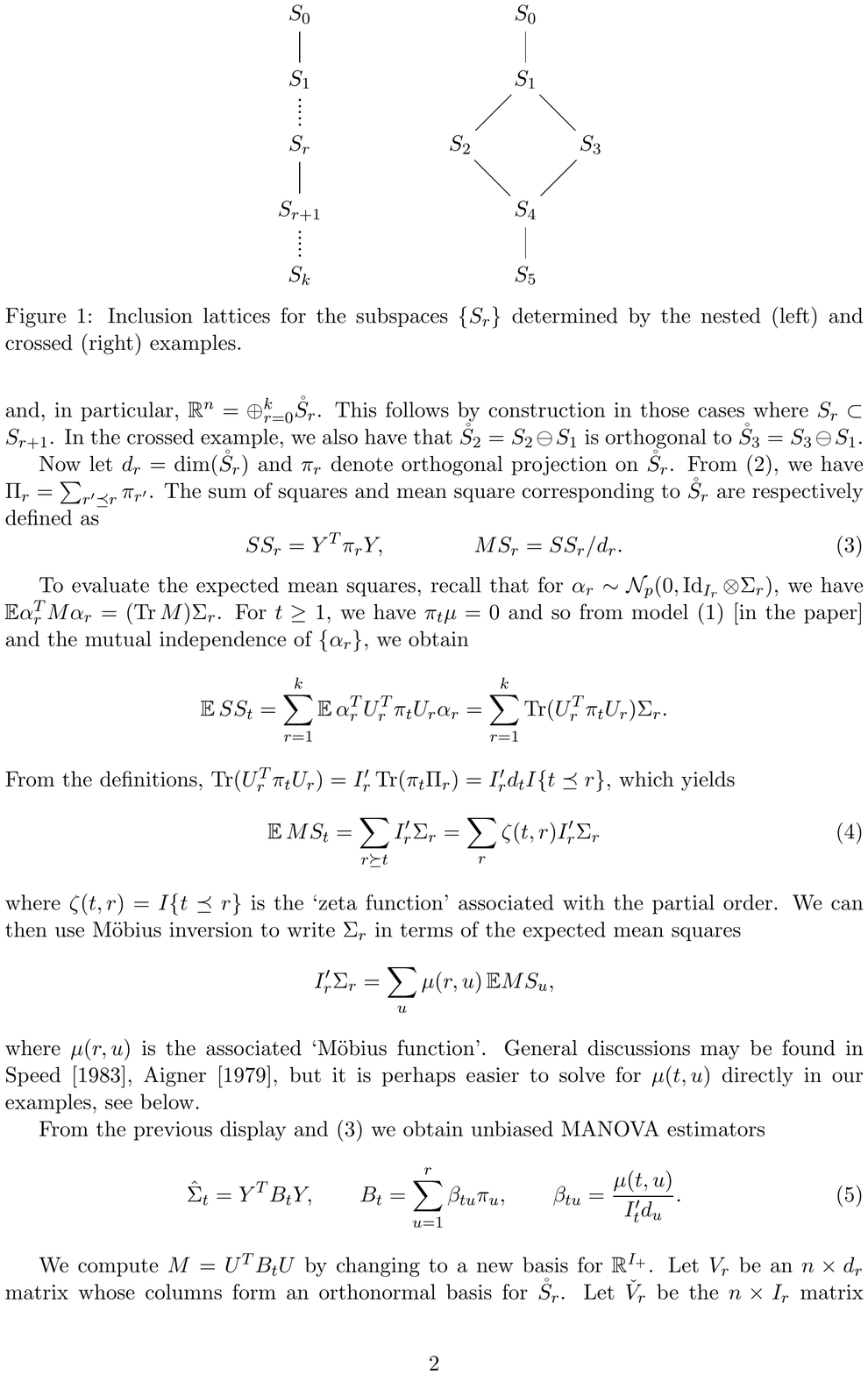}
\caption{Inclusion lattices for the subspaces $\{S_r\}$ determined by the nested
(left) and crossed (right) examples.}\label{fig:inclusionlattice}
\end{figure}

Let $d_r=\dim(\mathring{S}_r)$ and let $\pi_r$ denote the orthogonal
projection onto $\mathring{S}_r$. By (\ref{eq:orthosum}),
$\Pi_r=\sum_{r' \preceq r} \pi_{r'}$. The sum-of-squares and
mean-squares corresponding to $\mathring{S}_r$ are defined respectively as
\[\SS_r=Y^T \pi_r Y,\qquad \MS_r=\SS_r/d_r.\]
To evaluate $\E[\SS_t]$ for $t \geq 1$, note that $\pi_tX=0$ by the condition
$S_0 \subset S_t$. Then
\[\E[\SS_t]=\sum_{r=1}^k \E[\alpha_r^T U_r^T\pi_t U_r \alpha_r]
=\sum_{r=1}^k \Tr(U_r^T \pi_t U_r) \Sigma_r.\]
From the definitions, $\Tr(U_r^T \pi_t U_r)=c_r \Tr(\pi_t\Pi_r)=c_r d_t\1\{t
\preceq r\}$, which yields
\[\E[\MS_t]=\sum_{r \succeq t} c_r\Sigma_r=\sum_r \zeta(t,r)c_r\Sigma_r,\]
where $\zeta(t,z):=\1\{t \preceq r\}$ is the ``zeta function'' associated with
the partial order. We can then use M\"obius inversion to write $\Sigma_r$ in
terms of the expected mean squares,
\[c_r\Sigma_r=\sum_u \mu(r,u)\E[\MS_u],\]
where $\mu(r,u)$ is the associated ``M\"obius function''.
The MANOVA estimators are defined by substituting $\MS_u$ for
$\E[\MS_u]$, which yields
\begin{equation}\label{eq:betatu}
\hat{\Sigma}_t=Y^T B_tY,\qquad B_t=\sum_u \beta_{tu}\pi_u,
\qquad \beta_{tu}=\frac{\mu(t,u)}{c_td_u}.
\end{equation}
For our examples, we may solve for $\mu(t,u)$ directly;
more general discussions regarding the M\"obius inversion
may be found in \cite{speed}.

We apply Theorem \ref{thm:bulkdistribution} to each $\hat{\Sigma}_t$: To compute
$F=U^TB_tU$, we change to a new basis for $\R^{I_+}$. Let $V_r$ be an
$n \times d_r$ matrix whose columns form an orthonormal basis for
$\mathring{S}_r$. Let $\check{V}_r$ be the $n \times I_r$ matrix obtained by
combining the columns of each $V_{r'}$ for $r' \preceq r$. In view of
(\ref{eq:orthosum}), the columns of $\check{V}_r$ are an orthonormal basis for
$S_r$, and we have
\[U_r=\sqrt{c_r}\check{V}_r\check{W}_r^T\]
for some orthogonal matrix $\check{W}_r$ of size $I_r$. The block diagonal
matrix $Q=\diag(\check{W}_r)$ of size $I_+$ is also orthogonal, and
\[U=(\sqrt{I_1}U_1 \mid \cdots \mid \sqrt{I_k}U_k)=\sqrt{n}(\check{V}_1 \mid
\cdots \mid \check{V}_k)Q^T.\]
This yields $n^{-1/2}UQ=(\check{V}_1 \mid \cdots \mid \check{V}_k)$.
The matrix $V_r$ occurs in each $\check{V}_{r'}$ for which $r' \succeq r$. Let
$P$ be the permutation that collects all such occurrences of each $V_r$. Thus,
if we set $s(r)=|\{r':r' \succeq r\}|$ for $r \geq 1$ and $s(0)=k$, then we have
\[n^{-1/2}UQP=\big[\,\e_{s(0)}^T \otimes V_0 \mid \cdots \mid \e_{s(k)}^T
\otimes V_k\,\big].\]
(This is now in the decomposition $\R^{I_+}=\R^{s(0)d_0} \oplus \cdots
\oplus \R^{s(k)d_k}$.)
Now write $O=QP$. Recalling that $F=U^TB_tU$ with $B_t=\sum_u \beta_{tu}
V_u^TV_u$, and noting that $V_r^TV_u=\1\{r=u\}\Id_{d_u}$, we can write the
$(r,r')$ block of $O^TFO$, for $0 \leq r,r' \leq k$, in the form
\begin{align*}
(O^TFO)_{rr'}&=n(\e_{s(r)} \otimes V_r^T)B_t
(\e_{s(r')}^T \otimes V_{r'})\\
&=\1\{r=r'\}n\beta_{tr} \J_{s(r)} \otimes \Id_{d_r}.
\end{align*}
Thus we obtain the block diagonal representation
\[M:=O^TFO=\diag(n\beta_{tr}\J_{s(r)} \otimes
\Id_{d_r}).\]

We wish to compute the block trace of $(\Id+FD(a))^{-1}F$ in the
original decomposition
$\R^{I_+}=\R^{I_1} \oplus \cdots \oplus \R^{I_k}$. As the blocks
of $Q=\diag(\check{W}_r)$ are orthogonal of size $I_r$, $Q$
preserves this decomposition as well as $D(a)$, so we have
\[E(a):=O^TD(a)O=P^TD(a)P
=\diag(D_0(a) \otimes \Id_{d_0},\ldots,D_k(a) \otimes \Id_{d_k}),\]
where $D_r(a)=\diag(a_{r'}:r' \succeq r)$ is $s(r) \times s(r)$. Hence
\begin{align*}
S&:=O^T(\Id+FD(a))^{-1}FO\\
&=(\Id+ME(a))^{-1}M\\
&=\diag\left\{([\Id_{s(r)}+n\beta_{tr}\J_{s(r)}D_r(a)]^{-1}n\beta_{tr}
\J_{s(r)}) \otimes \Id_{d_r}\right\}_{r=0}^k.
\end{align*}
We apply the Woodbury formula via (\ref{eq:woodbury}) with
$R=\sqrt{n\beta_{tr}}\e_{s(r)}$ and $\Delta=D_r(a)$, to obtain
\begin{equation}\label{eq:gammatr}
S=\diag\{\gamma_{tr}\J_{s(r)} \otimes \Id_{d_r}\}_{r=0}^k,
\qquad \gamma_{tr}:=\frac{n\beta_{tr}}{1+n\beta_{tr}q_r},
\qquad q_r:=\e_{s(r)}^TD_r(a)\e_{s(r)}=\sum_{r' \succeq r} a_{r'}.
\end{equation}
Now undo the permutation $P$ and observe that the $(r,r)$ block corresponding to
the decomposition $\R^{I_+}=\R^{I_1} \oplus \cdots \oplus \R^{I_k}$ is given by
\[((P^T)^{-1}SP^{-1})_{rr}=\diag(\gamma_{tu}\Id_{d_u}:u \preceq
r).\]
Hence, we obtain the block trace
\[\Tr_r (\Id+FD(a))^{-1}F=\Tr_r [(P^T)^{-1}SP^{-1}]
=\sum_{u \preceq r} \gamma_{tu}d_u.\]
Recall from (\ref{eq:betatu}) that
$n\beta_{tu}d_u=I_t\mu(t,u)$. Then substituting for $\gamma_{tu}$ as defined in
(\ref{eq:gammatr}), the equation (\ref{eq:brecursion}) simplifies to
\begin{equation}\label{eq:generalb}
b_r(z)=-\frac{I_t}{I_r} \sum_{u \preceq r} \frac{\mu(t,u)}
{1+(I_t/d_u)\mu(t,u)q_u}.
\end{equation}

\subsubsection{Balanced nested classification}\label{subapp:nested}
The model (\ref{eq:balancednestedmodel}) may be
written in the form (\ref{eq:mixedmodel}) upon identifying $I_r=\prod_{s=1}^r
J_s$, $c_r=n/I_r$, $X\beta=\e_n\mu^T$, $U_r=\Id_{I_r} \otimes \e_{c_r}$, and
stacking the vectors $\alpha^{(r)}_*$ as rows of $\alpha_r \in \R^{I_r \times
p}$ and $\eps_*$ as rows of $\alpha_k \in \R^{n \times p}$.
The balanced model conditions are easily verified, with
(\ref{eq:orthosum}) following from the linear structure of the
inclusion lattice. Direct inversion yields
\[\mu(t,u)=\begin{cases}
1 & \text{if } u=t \\
-1 & \text{if } u=t+1\\
0 & \text{otherwise}.
\end{cases}\]
We also have $d_t=I_{t-1}(J_t-1)$, so the form of
the MANOVA estimator from (\ref{eq:betatu}) is
\[\hat{\Sigma}_t=Y^T\left(\frac{J_t}{n(J_t-1)}\pi_t-\frac{1}{n(J_{t+1}-1)}
\pi_{t+1}\right)Y\]
for $t=1,\ldots,k-1$, and the same without the $\pi_{t+1}$ term for $t=k$.
As $I_t/d_t=J_t/(J_t-1)$ and $I_t/d_{t+1}=1/(J_{t+1}-1)$,
(\ref{eq:generalb}) may be written as
\[b_r=\begin{cases} 0 & \text{if } r<t \\
-\frac{J_t-1}{J_t-1+J_tq_t} &\text{if } r=t\\
-\frac{1}{J_{t+1} \ldots J_r} \left(\frac{J_t-1}{J_t-1+J_tq_t}
-\frac{J_{t+1}-1}{J_{t+1}-1-q_{t+1}}\right) &\text{if } r>t.
\end{cases}\]
Noting $q_t=\sum_{r \geq t} a_r$, this implies that $a_1,\ldots,a_{r-1}$ are
irrelevant, and we obtain Corollary \ref{cor:nested}.

\subsubsection{Replicated crossed two-way classification}
The model (\ref{eq:crossedtwowaymodel}) may be written in the form
(\ref{eq:mixedmodel}) with $k=5$, upon identifying
\[I_1=I,\;I_2=IJ,\;I_3=IK,\;I_4=IJK,\;I_5=n,\]
$X\beta=\e_n \mu^T$, $U_1=\Id_I \otimes \e_{JKL}$,
$U_2=\Id_{IJ} \otimes \e_{KL}$, $U_3=\Id_I \otimes \e_J \otimes \Id_K \otimes
\e_L$, $U_4=\Id_{IJK} \otimes \e_L$, $U_5=\Id_n$, and
stacking the vectors $\alpha_*$, $\beta_*$, $\gamma_*$, $\delta_*$, $\eps_*$ as
the rows of matrices $\alpha_1,\ldots,\alpha_5$ respectively. The balanced model
conditions are easily verified, where (\ref{eq:orthosum}) uses the
observation that $\mathring{S}_2=S_2 \ominus S_1$ and $\mathring{S}_3=S_3
\ominus S_1$ are orthogonal.

From the lattice structure of Figure
\ref{fig:inclusionlattice} and direct evaluation, or by a general formula such
as \cite[p.\ 380]{speed}, we find that $M=(\mu(t,u))_{t,u=1,\ldots,5}$ has the
upper triangular form
\[M=\begin{pmatrix} 1 & -1 & -1 & 1 & 0 \\
& 1 & 0 & -1 & 0 \\
& & 1 & -1 & 0 \\
& & & 1 & -1\\
& & & & 1 \end{pmatrix}.\]
We also have the following values:
\begin{center}
\begin{tabular}{c|ccccc}
$t$ & 1 & 2 & 3 & 4 & 5\\
\hline
$d_t$ & $I-1$ & $I(J-1)$ & $I(K-1)$ & $I(J-1)(K-1)$ & $IJKL$ \\
$q_t$ & $\sum_1^5 a_i$ & $a_2+a_4+a_5$ & $a_3+a_4+a_5$ & $a_4+a_5$ & $a_5$
\end{tabular}
\end{center}
Then, for example, the MANOVA estimator $\hat{\Sigma}_2$ from
(\ref{eq:betatu}) is given by
\[\hat{\Sigma}_2
=Y^T\left(\frac{1}{I(J-1)KL}\pi_2-\frac{1}{I(J-1)(K-1)KL}\pi_4\right)Y,\]
and the forms of the other estimators follow similarly.

To explicitly write (\ref{eq:generalb}),
for $t>1$, let $\sigma(t)$ be the successor of $t$ in the partial order.
(We do not need $\sigma(t)$ for $t=5$.) Then $\mu(t,u)$ is only non-zero for
$u=t$ and $u=\sigma(t)$, so we have
\[b_r=\begin{cases} -\frac{1}{1+(I_t/d_t)q_t} & \text{if } r=t \\
-\frac{I_t}{I_r}\left(\frac{1}{1+(I_t/d_t)q_t}-\frac{1}{1-(I_t/d_{\sigma(t)})
q_{\sigma(t)}}\right) & \text{if } r \succ t\\
0 & \text{otherwise}.
\end{cases}\]
Specializing to $t=2$ yields Corollary \ref{cor:crossedtwoway}, and analogous
expressions may be derived for $t=3,4,5$.

For $t=1$, we have the following values for
$\check{\gamma}_u:=(1+(I_1/d_u)\mu(1,u)q_u)^{-1}$:
\begin{center}
\begin{tabular}{c|ccccc}
$u$ & 1 & 2 & 3 & 4\\
\hline
$\check{\gamma}_u$ & $\displaystyle \frac{I-1}{I-1+I\sum_1^5 a_i}$
& $\displaystyle \frac{J-1}{J-1-(a_2+a_4+a_5)}$
& $\displaystyle \frac{K-1}{K-1-(a_3+a_4+a_5)}$
& $\displaystyle \frac{(J-1)(K-1)}{(J-1)(K-1)+a_4+a_5}$
\end{tabular}
\end{center}
Then (\ref{eq:generalb}) simplifies to the equations
\begin{align*}
b_1&=-\check{\gamma}_1\\
b_2&=-J^{-1}(\check{\gamma}_1-\check{\gamma}_2)\\
b_3&=-K^{-1}(\check{\gamma}_1-\check{\gamma}_3)\\
b_4&=-(JK)^{-1}(\check{\gamma}_1-\check{\gamma}_2-\check{\gamma}_3
+\check{\gamma}_4)\\
b_5&=L^{-1}b_4.
\end{align*}

\section{Proof of asymptotic freeness}\label{appendix:freeapprox}
We prove Theorem \ref{thm:momentapprox} and Corollary \ref{cor:stieltjesapprox}.
To ease subscript notation, throughout this section we denote by
$M[i,j]$ the $(i,j)$ entry of a matrix $M$.

Let $Q$ be a $*$-polynomial in
$(x_i)_{i \in \I_j,j \in \{1,\ldots,J\}}$ with coefficients in $\langle
P_1,\ldots,P_d \rangle$, and let $q$ denote the
corresponding $*$-polynomial with coefficients in $\langle p_1,\ldots,p_d
\rangle$. For Theorem \ref{thm:momentapprox},
we wish to show for any $r$, almost surely as $N \to \infty$,
\begin{equation}\label{eq:momentapprox}
\left|N_r^{-1}\Tr_r Q\left(H_i:i \in \I_j,j \in
\{1,\ldots,J\}\right)-
\tau_r\left(q\left(h_i:i \in \I_j,j \in \{1,\ldots,J\}\right)\right)
\right| \to 0.
\end{equation}

The high-level strategy of the proof is the same as
\cite[Theorem 1.6]{benaychgeorges}, and follows these steps:
\begin{enumerate}[1.]
\item By applying linearity of $\Tr$ and $\tau$, we may reduce to the case
$Q=\prod_{k=1}^K Q_k$, where each $Q_k$ is a simple-valued polynomial of a
single family $(H_i:i \in \I_{j_k})$.
\item By ``centering'' each $Q_k$ and inducting on $K$, it suffices to consider
the case where $j_1 \neq j_2,\,j_2 \neq j_3,\ldots,j_K \neq j_1$ and each
$Q_k$ satisfies $\Tr Q_k(H_i:i \in \I_{j_k})=0$.
\item The main technical ingredient is Lemma \ref{lemma:momentapprox} below,
which establishes the result for such $Q$. We use orthogonal invariance in law
of $(H_i:i \in \I_{j_k})$ to introduce independently random
block-orthogonal matrices, and
then condition on the $H_i$'s to reduce to a statement about
Haar-orthogonal and deterministic matrices.
\end{enumerate}
The last step above uses an explicit computation of the trace, together with
basic properties of the joint moments of Haar-orthogonal matrices. We follow an
approach inspired by \cite[Theorem 2.1]{hiaipetz}, but which (we believe) fills
in an omission in the proof and also extends the combinatorial argument to 
deal with rectangular matrices and the orthogonal (rather than unitary) case.

\begin{proof}[Proof of Theorem \ref{thm:momentapprox}]
To show (\ref{eq:momentapprox}),
by linearity of $\Tr$ and $\tau$, it suffices to consider the case where $Q$ is
a $*$-monomial, which we may always write as a product of
$Q_1,\ldots,Q_K$ where each $Q_k$ depends only on
the variables of a single family $\I_{j_k}$.
Writing $Q_k=(P_1+\ldots+P_d)Q_k(P_1+\ldots+P_d)$ and again applying linearity
of $\Tr$ and $\tau$, it suffices to consider the case where each 
$Q_k$ is simple-valued, i.e.\ $P_{r_k}Q_kP_{s_k}=Q_k$ for some
$r_k,s_k \in \{1,\ldots,d\}$. If $s_k \neq r_{k+1}$
for any $k$ (with the cyclic identification $r_{K+1} = r_1$), then
(\ref{eq:momentapprox}) is trivial as both quantities on the left are 0.
If $s_k=r_{k+1}$ for all $k$, then it suffices to consider $r=r_1$ and to
replace $N_r^{-1} \Tr_r$ by $N^{-1} \Tr$ and $\tau_r$ by $\tau$.
The result then follows from Lemma \ref{lemma:inductiveclaim} below.
\end{proof}

\begin{lemma}\label{lemma:inductiveclaim}
Under the assumptions of Theorem \ref{thm:momentapprox}, fix $K \geq 1$,
$j_1,\ldots,j_K \in \{1,\ldots,J\}$, and $r_1,\ldots,r_K \in \{1,\ldots,d\}$.
For each $k=1,\ldots,K$, let $Q_k$ be a $*$-polynomial
with coefficients in $\langle P_1,\ldots,P_d \rangle$ of the variables
$(x_i)_{i \in \I_{j_k}}$ of the single family $\I_{j_k}$,
such that $P_{r_k}Q_kP_{r_{k+1}}=Q_k$ (with the
identification $r_{K+1}:=r_1$). Let $q_1,\ldots,q_K$ denote
the corresponding $*$-polynomials with coefficients in $\langle p_1,\ldots,p_d
\rangle$. Then, almost surely as $N \to \infty$,
\begin{equation}\label{eq:inductiveclaim}
\left|\frac{1}{N}\Tr \prod_{k=1}^K Q_k\left(H_i:i \in \I_{j_k}\right)
-\tau\left(\prod_{k=1}^K q_k\left(h_i:i \in \I_{j_k}\right)
\right)\right| \to 0.
\end{equation}
\end{lemma}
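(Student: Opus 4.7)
The plan is to follow the standard asymptotic-freeness roadmap: first reduce (by induction on $K$) to the case of $\D$-centered polynomials on cyclically alternating families; then exploit the block-orthogonal invariance of each family to introduce independent Haar-orthogonal intertwiners; and finally control the resulting trace by an orthogonal Weingarten moment computation combined with a Borel--Cantelli argument.

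The base case $K=1$ forces $r_2=r_1$, so $P_{r_1}Q_1P_{r_1}=Q_1$ and
\[
\frac{1}{N}\Tr Q_1(H_i : i \in \I_{j_1}) \;=\; \frac{N_{r_1}}{N}\cdot\frac{1}{N_{r_1}}\Tr_{r_1}Q_1(H_i : i \in \I_{j_1}) \;\to\; \tau(p_{r_1})\tau_{r_1}(q_1(h_i)) \;=\; \tau(q_1(h_i))
\]
a.s., by the single-family asymptotic equality of $\D$-law. For the inductive step, any consecutive indices with $j_k=j_{k+1}$ can be merged into a single polynomial of the variables in $\I_{j_k}$, strictly decreasing $K$; we may therefore assume $j_k\neq j_{k+1}$ cyclically. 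Next, for each $k$ with $r_k=r_{k+1}$ set $c_k:=\tau_{r_k}(q_k(h_i : i \in \I_{j_k}))\in\C$ and replace $Q_k,q_k$ by the centered polynomials $\widetilde{Q}_k:=Q_k-c_kP_{r_k}$ and $\widetilde{q}_k:=q_k-c_kp_{r_k}$, so that $\tau_{r_k}(\widetilde{q}_k(h_i))=0$ exactly and $N_{r_k}^{-1}\Tr_{r_k}\widetilde{Q}_k(H_i)\to 0$ a.s.\ by the asymptotic $\D$-law equality. Expanding $\prod_k Q_k=\prod_k(\widetilde{Q}_k+c_kP_{r_k})$ produces $2^K$ summands; any summand retaining at least one factor $c_kP_{r_k}$ has length less than $K$ and is handled by the inductive hypothesis together with the uniform bound $\|h_i\|\le C$ on the $c_k$'s. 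This reduces the task to proving that
\[
\frac{1}{N}\Tr\prod_{k=1}^K Q_k(H_i : i \in \I_{j_k}) \;\to\; 0 \quad \text{a.s.},
\]
where $j_k\neq j_{k+1}$ cyclically and $\F^\D(q_k(h_i : i \in \I_{j_k}))=0$ for every $k$; $\D$-freeness of the families then forces $\tau(\prod_k q_k(h_i))=0$, matching the target.

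To treat this reduced case I will introduce Haar randomness via block-orthogonal invariance: by independence and the invariance hypothesis, the joint law of $(H_i)_i$ is unchanged if for each $j$ we replace $\{H_i : i \in \I_j\}$ by $\{O_j^T H_i O_j : i \in \I_j\}$, where $O_1,\ldots,O_J\in\R^{N\times N}$ are independent of each other and of $(H_i)_i$, and each $O_j$ is block-diagonal with independent Haar-distributed orthogonal blocks of sizes $N_1,\ldots,N_d$. Since every $O_j$ commutes with every $P_r$, the coefficients of $Q_k$ are preserved and $Q_k(O_{j_k}^T H_i O_{j_k}:i\in\I_{j_k})=O_{j_k}^T M_k O_{j_k}$ with $M_k := Q_k(H_i : i \in \I_{j_k})$. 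Write $T:=N^{-1}\Tr\prod_{k=1}^K O_{j_k}^T M_k O_{j_k}$; the goal becomes $T\to 0$ a.s. After conditioning on the $M_k$'s, each $\|M_k\|$ is uniformly bounded in $N$, and each block trace $N_{r_k}^{-1}\Tr_{r_k} M_k$ tends to $0$ a.s.\ whenever $r_k=r_{k+1}$, by the centering.

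The main analytic step is a moment computation using the orthogonal Weingarten calculus, applied block-wise to each Haar block of each $O_j$. Expanding $T$ entry-wise and integrating out the $O_j$'s yields a sum over tuples of pair partitions---one per $j$ that appears, on the associated $2|\{k:j_k=j\}|$ legs---weighted by products of orthogonal Weingarten functions. Crossing or block-incompatible pair partitions are suppressed by a factor $O(N^{-1})$ from the Weingarten asymptotics of \cite{collinssnaidy}, while each non-crossing, block-compatible pair partition contributes at leading order a product of normalized block traces $N_{r_k}^{-1}\Tr_{r_k}M_k$ of the centered factors; the combination of centering and cyclic alternation $j_k\neq j_{k+1}$ forces every such leading contribution to vanish in the limit. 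Hence $\E[T\mid(M_k)]\to 0$ a.s. A parallel variance computation over the Haar matrices gives $\mathrm{Var}(T\mid(M_k))=O(N^{-2})$ a.s., so by conditional Chebyshev and Borel--Cantelli $T-\E[T\mid(M_k)]\to 0$ a.s., and combining yields $T\to 0$ a.s. The principal obstacle is the Weingarten bookkeeping in this paragraph: one must track pair partitions separately within each block-size class of each block-diagonal Haar matrix and verify that the centering hypothesis $\F^\D(q_k(h_i))=0$ together with the cyclic alternation $j_k\neq j_{k+1}$ eliminates every surviving non-crossing leading contribution. This mirrors the combinatorial core of \cite[Theorem 2.1]{hiaipetz} in the scalar unitary case and \cite[Theorem 1.6]{benaychgeorges} in the rectangular unitary case; adapting it to real-orthogonal invariance introduces no new structural difficulty but requires the orthogonal (rather than unitary) Weingarten asymptotics. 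Everything else---the inductive reduction, the conditioning, and the Borel--Cantelli step---should follow routine lines.
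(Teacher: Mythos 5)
Your reduction phase matches the paper's almost exactly: the induction on $K$, the centering of each $Q_k$ by a scalar multiple of $P_{r_k}$ when $r_k=r_{k+1}$, the binomial expansion that kills all $S\neq\emptyset$ terms by the inductive hypothesis, and the use of block-orthogonal invariance to insert independent Haar-orthogonal matrices. (The paper merges the $j_k=j_{k+1}$ cases \emph{inside} the $S=\emptyset$ branch rather than pre-merging as you do; both orderings work. The paper also re-centers with the empirical value $T_k=N_{r_k}^{-1}\Tr Q_k(H_i)$ rather than the free-model value $c_k$, so that the centering is \emph{exact}, $\Tr M_k=0$, rather than only asymptotic --- a small but convenient simplification.) Up to this point your plan is correct and essentially the same as the paper's.

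Where you diverge is in the core vanishing lemma. You propose to evaluate $\E[T\mid(M_k)]$ and $\mathrm{Var}(T\mid(M_k))$ separately via a full orthogonal Weingarten expansion, arguing that the leading ``non-crossing, block-compatible'' pairings contribute products of (vanishing) block traces and that all other pairings are suppressed by $O(N^{-1})$. The paper does something structurally different: it bounds the single quantity $\E\bigl|N^{-1}\Tr(\cdot)\bigr|^2\leq CN^{-2}$ (Lemma \ref{lemma:momentapprox}), which at once gives $\E[T]\to 0$, $\mathrm{Var}(T)\to 0$, and a.s.\ convergence via Borel--Cantelli, and it does so using only two crude facts about Haar-orthogonal entries --- an $L^1$ moment bound of order $N^{-K/2}$ (Lemma \ref{lemma:orthogonalproperties}(a)) and a sign-flip parity argument (Lemma \ref{lemma:orthogonalproperties}(b)) --- rather than the full Weingarten expansion. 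The combinatorial heavy lifting is then carried by a deterministic counting bound (Lemma \ref{lemma:deterministichelper}) that exploits the centering $\Tr D_k=0$ one ``good index'' at a time by induction.

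The gap in your proposal is precisely the part you flag as the ``principal obstacle.'' You assert that the orthogonal adaptation ``introduces no new structural difficulty,'' but the orthogonal Weingarten calculus pairs rows with rows and columns with columns, not only rows with columns as in the unitary case; the pairings that survive are the pair partitions $\sigma=\tau$ across both index sets, and the identification of which contracted factors reduce to $\Tr M_k$ (and hence vanish by centering) requires tracking which side of each $O_{l,r}^{\pm 1}$ a given index lives on. This is exactly the case analysis the paper encodes in the conditions of Lemma \ref{lemma:momentapprox} (that $\Tr D_k=0$ only when $(l_k,r_k,e_k)=(l_{k+1},r_{k+1},-e_{k+1})$) and in the ``good index'' argument of Lemma \ref{lemma:deterministichelper}. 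The paper explicitly says it both extends the combinatorics to the orthogonal and rectangular setting and ``fills in an omission'' of \cite{hiaipetz}; your sketch, by reverting to a Weingarten leading-order argument and deferring the bookkeeping, risks reproducing exactly the kind of omission the paper is careful to avoid. So: correct high-level plan through the Haar-insertion step, but the final vanishing lemma is left as a nontrivial to-do with an over-optimistic assessment of its difficulty, whereas the paper replaces it with a cleaner second-moment bound that sidesteps the Weingarten asymptotics entirely.
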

\begin{proof}
We induct on $K$. For $K=1$, (\ref{eq:inductiveclaim}) holds by the
assumption that $(h_i)_{i \in \I_{j_1}}$ and $(H_i)_{i \in \I_{j_1}}$ are
asymptotically equal in $\D$-law a.s.

For $K \geq 2$, assume inductively that (\ref{eq:inductiveclaim})
holds for each value $1,\ldots,K-1$ in place of $K$. Let
\[t_k=\frac{1}{\tau(p_{r_k})}
\tau\left(q_k\left(h_i:i \in \I_{j_k}\right)\right),\]
and define the ``centered'' $*$-polynomials
\[D_k=Q_k-t_kP_{r_k},\qquad d_k=q_k-t_kp_{r_k}.\]
We clarify that $t_k \in \C$ is a fixed constant (evaluated at the $h_i$'s,
not at the arguments $x_i$'s of these $*$-polynomials),
and thus $D_k$ and $d_k$ are still
$*$-polynomials of $(x_i)_{i \in \I_{j_k}}$ with coefficients in $\langle
P_1,\ldots,P_d \rangle$ and $\langle p_1,\ldots,p_d \rangle$.
We have $t_k=0$ if $r_k \neq r_{k+1}$, because $q_k$ is simple.
Denoting by $\S_K$ the collection of all subsets of $\{k:r_k=r_{k+1}\}$
and applying a binomial expansion,
\[\frac{1}{N}\Tr \prod_{k=1}^K Q_k\left(H_i:i \in \I_{j_k}\right)
=\sum_{S \in \S_K} Q(S)\]
where
\[Q(S):=\prod_{k \in S} t_k \cdot
\frac{1}{N}\Tr \prod_{k \in \{1,\ldots,K\} \setminus S}
D_k\left(H_i:i \in \I_{j_k}\right).\]
Each $D_k$ still satisfies $P_{r_k}D_kP_{r_{k+1}}=D_k$.
Hence, for every $S \neq \emptyset$, applying the induction hypothesis,
\begin{equation}\label{eq:QSapprox}
\left|Q(S)-\prod_{k \in S} t_k \cdot
\tau\left(\prod_{k \in \{1,\ldots,K\} \setminus S}
d_k\left(h_i:i \in \I_{j_k}\right)\right)\right| \to 0.
\end{equation}
For $S=\emptyset$, if $j_k=j_{k+1}$ for some $k \in
\{1,\ldots,K\}$ (or $j_K=j_1$), then combining $D_kD_{k+1}$ into a single
polynomial (and applying cyclic invariance of $\Tr$ and $\tau$ if $j_K=j_1$),
the induction hypothesis still yields (\ref{eq:QSapprox}).

The remaining case is when $S=\emptyset$ and
$j_k \neq j_{k+1}$ for each $k=1,\ldots,K$. Note, by definition of $d_k$, that
\[\tau\left(p_rd_k\left(h_i:i \in \I_{j_k}\right)p_r\right)=0\]
for each $r$ and $k$, so by freeness of
$(h_i)_{i \in \I_1},\ldots,(h_i)_{i \in \I_k}$ with amalgamation over $\langle
p_1,\ldots,p_d \rangle$,
\[\tau\left(\prod_{k=1}^K d_k\left(h_i:i \in \I_{j_k}\right)
\right)=0.\]
Thus, it remains to show that $Q(\emptyset) \to 0$. 
Note first that
the definition of the free deterministic equivalent and the condition
$N_r/N>c$ imply, almost surely as $N \to \infty$,
\[\left|\frac{N}{N_{r_k}}-\frac{1}{\tau(p_{r_k})}\right| \to 0,\qquad
\left|\frac{1}{N}\Tr \left(Q_k\left(H_i:i \in \I_{j_k}\right)\right)
-\tau\left(q_k\left(h_i:i \in \I_{j_k}\right)\right)\right| \to 0.\]
Hence $|t_k-T_k| \to 0$ a.s.\ for
\[T_k=\frac{1}{N_{r_k}}\Tr Q_k\left(H_i:i \in \I_{j_k}\right).\]
Then it suffices to show
\[M(\emptyset):=\frac{1}{N} \Tr \prod_{k=1}^K M_k \to 0\]
for the matrices
\[M_k=Q_k\left(H_i:i \in \I_{j_k}\right)-T_kP_{r_k},\]
as we may replace in $Q(\emptyset)$ each $t_k$ by $T_k$ and bound the
remainders using the operator norm.

Finally, let us introduce random matrices
$(O_{j,r})_{j \in \N,r \in \{1,\ldots,d\}}$ that are
independent of each other and of the $H_i$'s, such that each $O_{j,r}$ is
orthogonal and Haar-distributed in $\R^{N_r \times N_r}$.
For each $j \in \N$, define the block diagonal matrix
$O_j=\diag(O_{j,1},\ldots,O_{j,d})$.
By orthogonal invariance in law
of $(H_i)_{i \in \I_{j_k}}$, we have the equality in law
\[M(\emptyset)\overset{L}{=}
\frac{1}{N}\Tr \prod_{k=1}^K O_{j_k}M_kO_{j_k}^{-1}.
\]
Write $\check{M}_k \in \R^{N_{r_k} \times N_{r_{k+1}}}$ as the non-zero block of
$M_k$. Then the above may be written as
\begin{equation}\label{eq:Memptyexpression}
M(\emptyset) \overset{L}{=}
\frac{1}{N}\Tr \prod_{k=1}^K O_{j_k,r_k}
\check{M}_kO_{j_k,r_{k+1}}^{-1}\Id_{N_{r_{k+1}}}.
\end{equation}
Conditional on the $H_i$'s, $\check{M}_k$ are deterministic matrices satisfying
$\|\check{M}_k\| \leq C$ for some constant $C>0$ and all large $N$ a.s., and if
$r_k=r_{k+1}$ then $\Tr \check{M}_k=\Tr M_k=0$ by definition of $T_k$.
Furthermore, recall that we are in the case $j_k \neq j_{k+1}$ for
each $k$.

The claim $M(\emptyset) \to 0$ follows from the following lemma:
\begin{lemma}\label{lemma:momentapprox}
Fix $d,K \geq 1$, $l_1,\ldots,l_K \in \N$,
$r_1,\ldots,r_K \in \{1,\ldots,d\}$, and $e_1,\ldots,e_K \in \{-1,1\}$. 
For $N_1,\ldots,N_d \geq 1$,
let $\{O_{l,r}\}_{l \in \N,r \in \{1,\ldots,d\}}$ be independent random matrices
such that
each $O_{l,r}$ is a Haar-distributed orthogonal matrix in $\R^{N_r \times N_r}$.
Let $D_1 \in \C^{N_{r_1} \times N_{r_2}},
D_2 \in \C^{N_{r_2} \times N_{r_3}},\ldots,D_K \in \C^{N_{r_K} \times
N_{r_1}}$ be deterministic matrices
such that, for each $k=1,\ldots,K$ (and cyclically identifying
$l_{K+1}:=l_1$, etc.),
if $(l_k,r_k,e_k)=(l_{k+1},r_{k+1},-e_{k+1})$, then $\Tr D_k=0$.

Let $N=N_1+\ldots+N_d$, and
suppose there exist constants $C,c>0$ such that, as $N \to \infty$,
$N_r/N>c$ for each $r=1,\ldots,d$ and $\|D_k\|<C$
for each $k=1,\ldots,K$. Then, almost surely,
\[N^{-1}\Tr \Big(O_{l_1,r_1}^{e_1}D_1O_{l_2,r_2}^{e_2}D_2\ldots
O_{l_K,r_K}^{e_K}D_K\Big) \to 0.\]
\end{lemma}
\noindent (We emphasize that the matrices $O_{l,r}$ and $D_k$ are
$N$-dependent, while $(l_k,r_k,e_k, k=1,\ldots,K)$ remain fixed as $N$
grows.)

Assuming this lemma for now, write the right side of (\ref{eq:Memptyexpression})
in the form
  \begin{equation*}
    N^{-1}\Tr \Big(O_{l_1,r_1}^{e_1}D_1O_{l_2,r_2}^{e_2}D_2\ldots
O_{l_{2K},r_{2K}}^{e_{2K}}D_{2K}\Big),
 \end{equation*}
by making the identifications
\begin{align*}
  (l_{2k-1},r_{2k-1},e_{2k-1},D_{2k-1})
  & \leftarrow  (j_k,r_k,1,\check{M}_k) \\
  (l_{2k},r_{2k},e_{2k},D_{2k})
  & \leftarrow  (j_k,r_{k+1},-1,\Id_{N_{r_{k+1}}}).
\end{align*}
Then Lemma \ref{lemma:momentapprox} implies $M(\emptyset) \to 0$
a.s.\ conditional on the $H_i$'s, and hence unconditionally as well.
Thus (\ref{eq:QSapprox}) holds for all $S \in \S_K$.

Finally, reversing the
binomial expansion,
\[\sum_{S \in \S_K} \prod_{k \in S} t_k \cdot
\tau\left(\prod_{k \in \{1,\ldots,K\} \setminus S}
d_k\left(h_i:i \in \I_{j_k}\right)\right)
=\tau\left(\prod_{k=1}^K q_k\left(h_i:i \in \I_{j_k}\right)\right).\]
This establishes (\ref{eq:inductiveclaim}), completing the induction.
\end{proof}

To conclude the proof of Theorem \ref{thm:momentapprox}, it remains
to establish the above Lemma \ref{lemma:momentapprox}. We require the following 
fact about joint moments of entries of Haar-orthogonal matrices:

\begin{lemma}\label{lemma:orthogonalproperties}
Let $O \in \R^{N \times N}$ be a random Haar-distributed
real orthogonal matrix, let $K \geq 1$ be any positive integer, and let
$i_1,j_1,\ldots,i_K,j_K \in \{1,\ldots,N\}$. Then:
\begin{enumerate}[(a)]
\item There exists a constant $C:=C_K>0$ such that
\[\E\big[\big|O[i_1,j_1]O[i_2,j_2]\ldots O[i_K,j_K]\big|\big]
\leq CN^{-K/2}.\]
\item If there exists $i \in \{1,\ldots,N\}$ such that $i_k=i$ for
an odd number of indices $k \in \{1,\ldots,K\}$ or $j_k=i$ for an odd number of
indices $k \in \{1,\ldots,K\}$, then $\E[O[i_1,j_1]\ldots O[i_K,j_K]]=0$.
\end{enumerate}
\end{lemma}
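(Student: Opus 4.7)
The plan is to handle the two parts separately, each by a short classical argument exploiting invariance or an explicit distributional representation.

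For part (b), I would use the fact that the Haar measure on $O(N)$ is invariant under left and right multiplication by any fixed orthogonal matrix, including diagonal sign matrices. Suppose some index $i$ appears as a row index $i_k$ an odd number of times. Let $D=\diag(\eps_1,\ldots,\eps_N)$ be the diagonal matrix with $\eps_i=-1$ and $\eps_j=1$ for $j\neq i$; then $D$ is orthogonal, so $DO \stackrel{d}{=} O$. Since $(DO)[i_k,j_k]=\eps_{i_k}O[i_k,j_k]$, invariance yields
\[\E\prod_{k=1}^K O[i_k,j_k]
=\E\prod_{k=1}^K (DO)[i_k,j_k]
=\Big(\prod_{k=1}^K \eps_{i_k}\Big)\,\E\prod_{k=1}^K O[i_k,j_k].\]
The sign $\prod_k \eps_{i_k}=(-1)^{\#\{k:i_k=i\}}$ equals $-1$ by the odd-count hypothesis, so the expectation must vanish. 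The column-index case is symmetric, using $OD\stackrel{d}{=}O$ so that $(OD)[i_k,j_k]=\eps_{j_k}O[i_k,j_k]$.

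For part (a), I would first apply the generalized H\"older inequality to reduce to a single-entry bound:
\[\E\Big[\prod_{k=1}^K |O[i_k,j_k]|\Big]
\leq \prod_{k=1}^K \big(\E|O[i_k,j_k]|^K\big)^{1/K},\]
so it suffices to prove $\E|O[i,j]|^K \leq C_K N^{-K/2}$. For this, I would use the standard representation $O[i,j]\stackrel{d}{=} g_1/\|g\|$ with $g=(g_1,\ldots,g_N)\sim \Nor(0,\Id_N)$, which follows from the fact that each row (resp.\ column) of a Haar orthogonal matrix is uniform on $S^{N-1}$. By Cauchy--Schwarz, $\E|O[i,j]|^K \leq (\E|O[i,j]|^{2K})^{1/2}$, and $O[i,j]^2$ has a $\operatorname{Beta}(1/2,(N-1)/2)$ distribution, whose $K$-th moment equals $(2K-1)!!/(N(N+2)\cdots(N+2K-2))=O(N^{-K})$. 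Taking the square root gives the desired $N^{-K/2}$ bound.

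I do not anticipate any substantive obstacle: part (b) is pure symmetry, and part (a) rests on the elementary Beta-distribution moment computation together with H\"older. If one wished to avoid the explicit Beta moments, an equally valid route would be to write $\E|O[i,j]|^{2K}=\E|g_1|^{2K}\,\E\|g\|^{-2K}$ (using independence of $g_1/\|g\|$ and $\|g\|$), bound $\E\|g\|^{-2K}$ by restricting to the event $\{\|g\|^2\geq N/2\}$ via Gaussian concentration, and use the uniform bound $|g_1/\|g\||\leq 1$ off that event; either approach yields the same constant order.
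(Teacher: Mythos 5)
Your part (b) argument is essentially identical to the paper's: both exploit sign-invariance of Haar measure under left/right multiplication by $\diag(\ldots,-1,\ldots)$. Your part (a) argument is correct but takes a genuinely different, more elementary route. The paper bounds $\E\bigl[\prod_k O[i_k,j_k]^2\bigr] \le C_K N^{-K}$ by citing Weingarten-calculus estimates from Collins--\'Sniady (Eq.\ (21) and Theorem 3.13 of that reference), then applies Cauchy--Schwarz once to the full product. You instead decouple the factors via generalized H\"older with exponents all equal to $K$, reducing to a bound on a single marginal entry, and then use the explicit fact that $O[i,j]^2 \sim \mathrm{Beta}(\tfrac12,\tfrac{N-1}2)$ (equivalently, a row of $O$ is uniform on $S^{N-1}$), whose $K$-th moment is $\prod_{r=0}^{K-1}\frac{1+2r}{N+2r} = O(N^{-K})$; a final Cauchy--Schwarz handles odd $K$. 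Both arguments give the same order of bound with a $K$-dependent constant uniform over index choices. The trade-off: the paper's route is shorter to state but relies on nontrivial joint-moment estimates for Haar orthogonal matrices, whereas yours is self-contained and uses only the marginal entry distribution (at the cost of the H\"older decoupling, which discards correlation structure but is harmless here since only the order in $N$ is needed). Both proofs are valid.
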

\begin{proof}
\cite[Eq.\ (21) and Theorem 3.13]{collinssnaidy}
imply $\E[O[i_1,j_1]^2\ldots O[i_K,j_K]^2] \leq CN^{-K}$ for a constant
$C:=C_K>0$. Part (a) then follows by Cauchy-Schwarz.
Part (b) follows from the fact that the distribution of $O$ is invariant to
multiplication of row $i$ or column $i$ by $-1$, hence if $i_k=i$ or $j_k=i$ for
an odd number of indices $k$, then
$\E[O[i_1,j_1]\ldots O[i_K,j_K]]=-\E[O[i_1,j_1]\ldots O[i_K,j_K]]$.
\end{proof}

\begin{proof}[Proof of Lemma \ref{lemma:momentapprox}]
Define $V_k=O_{l_k,r_k}^{e_k}$ (which is $O_{l_k,r_k}^T$ if $e_k=-1$).
Expanding the trace,
\begin{equation}\label{eq:traceexpansion}
\Tr\left[\prod_{k=1}^K V_kD_k\right]
=\sum_{\i,\j} V(\i,\j)D(\i,\j),
\end{equation}
where the summation is over all tuples
$(\i,\j):=(i_1,j_1,i_2,j_2,\ldots,i_K,j_K)$ satisfying
\begin{equation}\label{eq:ijconditions}
1 \leq i_k,j_k \leq N_{r_k}
\end{equation}
for each $k=1,\ldots,K$,
and where we have defined (with the identification $i_{K+1}:=i_1$)
\[V(\i,\j)=\prod_{k=1}^K V_k[i_k,j_k],\;\;\;\;
D(\i,\j)=\prod_{k=1}^K D_k[j_k,i_{k+1}].\]

Denote
\begin{equation}
  \label{eq:Edef}
\mathcal{E}=\E\left[\left|N^{-1}\Tr \left(\prod_{k=1}^K V_kD_k
\right)\right|^2\right]
=N^{-2}\sum_{\i,\j}\sum_{\i',\j'} D(\i,\j)\overline{D(\i',\j')}
\E[V(\i,\j)V(\i',\j')],
\end{equation}
where the second equality uses that
each $V_k$ is real and each $D_k$ is deterministic.
By the Borel-Cantelli lemma, it suffices to show $\mathcal{E} \leq CN^{-2}$ for
some constant $C:=C_K>0$.

Let $\mathcal{R}$ be the set of distinct pairs among $(l_k,r_k)$ for
$k=1,\ldots,K$, corresponding to the set of distinct matrices $O_{l,r}$ that
appear in (\ref{eq:traceexpansion}). By independence of the matrices $O_{l,r}$,
\begin{equation}\label{eq:EQ}
\E[V(\i,\j)V(\i',\j')]
=\prod_{(l,r) \in \mathcal{R}}
\E\left[\prod_{k:(l_k,r_k)=(l,r)} V_k[i_k,j_k]V_k[i_k',j_k']\right].
\end{equation}
Since $O_{l,r}$ is invariant in law under permutations of rows and columns, each
expectation on the right side above depends only on which indices
are equal, and not on the actual index values.
(For example, denoting $O:=O_{l,r}$,
\begin{equation}
    \label{eq:Qexample}
O[1,2] O^{-1}[2,3] O[1,4] O^{-1}[3,3]
  \stackrel{L}{=} O[8,7] O^{-1}[7,6] O[8,5] O^{-1}[6,6]     
\end{equation}
where the equality in law holds by permutation of both the rows and the
columns of $O$.)
We therefore analyse $\mathcal{E}$  by decomposing the sum in \eqref{eq:Edef} 
over the different relevant partitions of $(\i,\j,\i',\j')$ 
specifying which indices are equal.

More precisely, let
\[\I=(i_k,j_k,i_k',j_k':k=1,\ldots,K)\]
be the collection of all indices, with cardinality $|\I|=4K$.
For each $(l,r) \in \mathcal{R}$, let
\[\cI(l,r)=(i_k,j_k,i_k',j_k':k \text{ such that }l_k=l,r_k=r).\]
These sets $\I(l,r)$ form a fixed partition of $\I$.
For each $(l,r)$, denote by $\Q(l,r)$ any further partition
of the indices in $\cI(l,r)$, and let
\begin{equation}\label{eq:Q}
\Q=\bigsqcup_{(l,r) \in \mathcal{R}} \Q(l,r)
\end{equation}
be their combined partition of $\I$. 
Denoting by $Q_{l,r}=|\Q(l,r)|$
the number of elements of $\Q$ that partition $\I(l,r)$, we may identify
\[\Q \equiv \{(l,r,q):(l,r) \in \mathcal{R},\,q \in \{1,\ldots,Q_{l,r}\}\}.\]

We say that $(\i,\j,\i',\j')$
\emph{induces} $\Q$ if, for every two indices belonging to the same
set $\cI(l,r)$, they are equal in value if and only if they belong to
the same element of $\Q$.\footnote{For example, if $K=2$, in display
\eqref{eq:Qexample},
both $(i_1,j_1,i_2,j_2,i_1',j_1',i_2',j_2')=(1,2,2,3,1,4,3,3)$
and $(8,7,7,6,8,5,6,6)$ induce
\[ \Q(l,r) = \{ \{i_1,i_1'\}, \{j_1,i_2\}, \{j_2,i_2',j_2'\}, \{j_1'\}
  \} \quad \text{with} \quad Q_{l,r}=4. \].}
Then $\E[V(\i,\j)V(\i',\j')]$ is the same for all
$(\i,\j,\i',\j')$ that induce the same partition $\Q$. Thus we may define
$E(\Q)=\E[V(\i,\j)V(\i',\j')]$ for any such $(\i,\j,\i',\j')$ and write
\[\mathcal{E}=N^{-2}\sum_{\Q} E(\Q) \sum_{\i,\j,\i',\j'|\Q}
D(\i,\j)\overline{D(\i',\j')},\]
where the first sum is over all partitions $\Q$ of the form (\ref{eq:Q}),
and the second is over all
$(\i,\j,\i',\j')$ satisfying (\ref{eq:ijconditions}) and inducing
$\Q$.

Applying Lemma \ref{lemma:orthogonalproperties}(a) and the bound $N_r/N>c$
to (\ref{eq:EQ}), we have $|E(\Q)| \leq CN^{-K}$ for a constant $C:=C_K>0$ and
all partitions $\Q$. Thus
\begin{equation}\label{eq:Eexpression}
\mathcal{E} \leq CN^{-2-K} \sum_{\Q:E(\Q) \neq 0} |D(\Q)|
\end{equation}
where
\[D(\Q):=\sum_{\i,\j,\i',\j'|\Q} D(\i,\j)\overline{D(\i',\j')}
=\sum_{\i,\j,\i',\j'|\Q} \prod_{k=1}^K D_k[j_k,i_{k+1}]
\prod_{k=1}^K \overline{D_k}[j_k',i_{k+1}'].\]

For fixed $\Q$, we may rewrite $D(\Q)$ as follows: Denote $L=2K$, $M_k=D_k$, and
$M_{K+k}=\overline{D_k}$. 
Let $\mathfrak{q,q'}:\{1,\ldots,L\} \to \Q$ be the maps such that
$\mathfrak{q}(k),\mathfrak{q'}(k),\mathfrak{q}(K+k),\mathfrak{q'}(K+k)$ are the elements of $\Q$ containing
$j_k,i_{k+1},j_k',i_{k+1}'$, respectively. 
Then
\[D(\Q)=\sum_\a \prod_{\ell=1}^L 
      M_\ell[\a_{\mathfrak{q}(\ell)},\a_{\mathfrak{q'}(\ell)}],\]
where $\sum_\a$ denotes the summation over all maps $\a:\Q \to \N$ such that
$\a(l,r,q) \in \{1,\ldots,N_r\}$ for each $(l,r,q) \in \Q$
and $\a(l,r,q) \neq \a(l,r,q')$ whenever $q \neq q'$. (So $\a$ gives
the index values, which must be distinct for elements of $\Q$
corresponding to the same $(l,r) \in \mathcal{R}$.)

We may simplify this condition on $\a$ by considering
the following embedding: Let
\[\tilde{N}=\sum_{(l,r) \in \mathcal{R}} N_r,\]
and consider the corresponding
block decomposition of $\C^{\tilde{N}}$ with blocks
indexed by $\mathcal{R}$. (So the $(l,r)$ block has size $N_r$.) 
For each $\ell=1,\ldots,L$, if $\mathfrak{q}(\ell)=(l,r,q)$ 
and $\mathfrak{q'}(\ell)=(l',r',q')$, then note that $M_\ell$ is
of size $N_r \times N_{r'}$. Let
$\tilde{M}_\ell \in \C^{\tilde{N} \times \tilde{N}}$ be its embedding
whose $(l,r) \times (l',r')$ block
equals $M_\ell$ and whose remaining blocks equal 0. Then
\[D(\Q)=\sum_\a \prod_{\ell=1}^L 
 \tilde{M}_\ell[\alpha_{\mathfrak{q}(\ell)},\alpha_{\mathfrak{q'}(\ell)}],\]
where $\sum_\a$ now denotes the summation over all maps $\a:\Q \to
\{1,\ldots,\tilde{N}\}$ such that each $\alpha(l,r,q)$ belongs to the $(l,r)$
block of $\{1,\ldots,\tilde{N}\}$,
and the values $\alpha(l,r,q)$ are distinct across all $(l,r,q) \in \Q$.
Extending the range of summation of each $\alpha(l,r,q)$ to all of
$\{1,\ldots,\tilde{N}\}$ simply adds 0 by the definition of $\tilde{M}_\ell$,
so we finally obtain
\begin{equation}\label{eq:DQfinal}
D(\Q)=\sum_{\a_1,\ldots,\a_Q}^* 
 \prod_{\ell=1}^L \tilde{M}_\ell[\alpha_{\mathfrak{q}(\ell)},
\alpha_{\mathfrak{q'}(\ell)}]
\end{equation}
where $Q=|\Q|$ and the sum is over all tuples of $Q$
distinct indices in $\{1,\ldots,\tilde{N}\}$.

We must bound $|D(\Q)|$ for any $\Q$ such that $E(\Q)
\neq 0$.
 By Lemma \ref{lemma:orthogonalproperties}(b) and the expression
(\ref{eq:EQ}) for $E(\Q)$, if $E(\Q) \neq 0$,
then for each $(l,r) \in \mathcal{R}$
and each index value $i \in \{1,\ldots,N_r\}$, there must be
an even number of indices in $\cI(l,r)$ equal in value to $i$, i.e.\ each
element $S \in \Q$ must have even cardinality. Furthermore, if exactly two
indices in $\cI(l,r)$ equal $i$, then they must both be row indices or both be
column indices for $O_{l,r}$. In particular, if
$S \in \Q$ has cardinality $|S|=2$, and
if $S=\{j_k,i_{k+1}\}$ or $S=\{j_k',i_{k+1}'\}$,
then this implies
$(l_k,r_k,e_k)=(l_{k+1},r_{k+1},-e_{k+1})$. The condition of the lemma ensures
in this case that $\Tr D_k=0$, so also $\Tr \tilde{M}_k=\Tr \tilde{M}_{K+k}=0$.

We pause to formulate a lemma which provides the bound for
$|D(\Q)|$ that we need.
\begin{lemma}\label{lemma:deterministichelper}
Fix integers $L,Q \geq 1$ and a constant $B>0$.
Let $\ii,\jj:\{1,\ldots,L\} \to \{1,\ldots,Q\}$ be two fixed maps. Let
$M_1,\ldots,M_L \in \C^{N \times N}$ be such that $\|M_l\| \leq B$ for all $l$.
Call an index $q \in \{1,\ldots,Q\}$ ``good'' if both of the following hold:
\begin{itemize}
\item Exactly two of $\ii(1),\ldots,\ii(L),\jj(1),\ldots,\jj(L)$ are equal to $q$.
\item If $\ii(\ell)=\jj(\ell)=q$ for some $\ell$, then $\Tr M_\ell=0$.
\end{itemize}
Let $T$ be the number of good indices $q \in Q$.

Denote by $\sum_{\a_1,\ldots,\a_Q}^*$
the sum over all tuples of $Q$ indices
$\a_1,\ldots,\a_Q \in \{1,\ldots,N\}$ with all values distinct.
Then, for some constant $C:=C(L,Q,B)>0$,
\begin{equation}\label{eq:deterministichelper}
\left|\sum_{\a_1,\ldots,\a_Q}^* \prod_{\ell=1}^L
M_\ell[\a_{\ii(\ell)},\a_{\mathfrak{q}'(\ell)}]\right|
\leq CN^{Q-T/2}.
\end{equation}
\end{lemma}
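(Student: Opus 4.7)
My plan is to prove Lemma~\ref{lemma:deterministichelper} by induction on the number of good indices $T$, reducing $Q$ by one at each step. The base case $T=0$ is trivial: the bound $|M_\ell[a,b]| \leq \|M_\ell\| \leq B$ applied entrywise, combined with at most $N^Q$ tuples of distinct indices, gives that the sum is bounded by $B^L N^Q = B^L N^{Q-T/2}$.

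For the inductive step, I would pick a good index $q^*$ and perform the sum over $\a_{q^*}$ explicitly. The two appearances of $q^*$ fall into one of two structural cases. In Case~A, $q^*$ lies on a self-loop, i.e.\ there is some $\ell^*$ with $\ii(\ell^*)=\jj(\ell^*)=q^*$; the goodness condition forces $\Tr M_{\ell^*}=0$, so inclusion-exclusion in the constraint on $\a_{q^*}$ gives
\[\sum_{\a_{q^*} \neq \a_{q'},\, q' \neq q^*} M_{\ell^*}[\a_{q^*},\a_{q^*}] = \Tr M_{\ell^*} - \sum_{q' \neq q^*} M_{\ell^*}[\a_{q'},\a_{q'}] = -\sum_{q' \neq q^*} M_{\ell^*}[\a_{q'},\a_{q'}],\]
reducing the original sum to $Q-1$ sub-sums over $Q-1$ distinct indices, each with an absorbed diagonal factor at some $q'$. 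In Case~B, $q^*$ appears in two distinct matrices $\ell_1 \neq \ell_2$; summing $\a_{q^*}$ over all of $\{1,\ldots,N\}$ produces an entry of a matrix product such as $(M_{\ell_1}^* M_{\ell_2})[\cdot,\cdot]$ with operator norm at most $B^2$, and inclusion-exclusion adds $Q-1$ correction terms that identify $q^*$ with some other $q'$.

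In each resulting sub-sum I would verify that the new good-vertex count $T'$ satisfies $T' \geq T-2$: removing $q^*$ costs one good vertex, and at most one neighbor of $q^*$ can lose good status through a changed degree or through a newly-created self-loop. Applying the inductive bound $CN^{(Q-1)-T'/2} \leq CN^{Q-T/2}$ to each of the $O_Q(1)$ sub-sums then completes the step.

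The main obstacle is the combinatorial bookkeeping of how the goodness condition is preserved under edge contraction (the Case~B main term) and vertex merging (the correction terms in both cases). The delicate sub-case is when the two edges at $q^*$ in Case~B both connect to the same neighbor $q_1$: the resulting sub-sum has a self-loop at $q_1$ with matrix $M_{\ell_1}^* M_{\ell_2}$, whose trace need not vanish, so a previously good vertex $q_1$ may lose its status. Confirming that no more than two good indices are lost per operation, and verifying that the induction constants $C$ can be taken to depend only on $L$, $Q$, and $B$, is the technical heart of the argument.
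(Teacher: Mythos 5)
Your proposal is correct and follows essentially the same route as the paper's proof: induction with a trivial $T=0$ base case, elimination of a chosen good index, a two-case split (self-loop with vanishing trace versus two distinct matrices), inclusion-exclusion producing a main term plus $Q-1$ correction terms, and tracking that at most two good indices are lost per step. You also correctly flag the one delicate sub-case — when, in Case~B, both occurrences of $q^*$ connect to the same neighbor, so the contracted matrix product forms a self-loop whose trace need not vanish — which is exactly the exception the paper handles. The only cosmetic difference is that the paper parametrizes the induction by a lower bound $t \le T$ (with $\check t = \max(t-2,0)$) rather than inducting on $T$ directly, but the two framings are interchangeable and yield the same bound and the same constant bookkeeping.
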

  
Assuming this lemma for now, we can complete the proof of Lemma
\ref{lemma:momentapprox}.
We saw that any $S \in \Q$ of cardinality $|S|=2$ is good, for
if $S=\{\mathfrak{q}(\ell),\mathfrak{q}'(\ell)\}$, then either
$S=\{j_k,i_{k+1}\}$ or $S=\{j_k',i_{k+1}'\}$ and so
$\Tr \tilde{M}_\ell=0$.
Letting $T$ be the number of elements of $\Q$
with cardinality 2, we have $2T+4(Q-T) \leq 4K$.
But $T$ is also the number of good indices $q$, so Lemma
\eqref{eq:deterministichelper} implies
\begin{equation}
  \label{eq:keybound}
 |D(\Q)| \leq C\tilde{N}^{Q-T/2} \leq C\tilde{N}^K.   
\end{equation}
Noting that $\tilde{N}/N$ and the number of distinct
partitions $\Q$ are also both bounded by a $K$-dependent constant,
and combining with (\ref{eq:Eexpression}), we obtain $\mathcal{E} \leq CN^{-2}$
as desired, and hence Lemma \ref{lemma:momentapprox}.
\end{proof}

\begin{proof}[Proof of Lemma \ref{lemma:deterministichelper}]
Denote $[L]=\{1,\ldots,L\}$ and $[Q]=\{1,\ldots,Q\}$.
We will show the following claim by induction on $t$: For any $L,Q \geq 1$ and
$B>0$, if the number of good indices $T$ satisfies $T \geq t$, then there
exists a constant $C:=C(L,Q,B,t)>0$ for which
\begin{equation}\label{eq:induction}
\left|\sum_{\a_1,\ldots,\a_Q}^* \prod_{l=1}^L
M_l[\a_{\mathfrak{q}(l)},\a_{\mathfrak{q}'(l)}]\right|
\leq CN^{Q-t/2}.
\end{equation}
The desired result follows from this claim applied with $t=T$ and
$C=\max_{t=0}^Q C(L,Q,B,t)$.

For the base case $t=0$, the left side of (\ref{eq:induction}) is
bounded by $CN^Q$ for $C=B^L$, regardless of $T$, as each entry of $M_l$ is
bounded by $B$.

For the inductive step, let $t \geq 1$, suppose the number $T$ of good indices
satisfies $T \geq t$, and suppose
the inductive claim holds for $t-1,t-2,\ldots,0$.
We consider two cases
corresponding to the two possibilities for goodness of an index $q$:

{\bf Case 1:} There exists a good index $q$ and some $l \in [L]$ such
that $\mathfrak{q}(l)=\mathfrak{q}'(l)=q$ and $\Tr M_l=0$. For notational convenience, assume
without loss of generality that $q=Q$ and $l=L$.
Summing first over $\alpha_1,\ldots,\alpha_{Q-1}$ and 
then over $\alpha_Q$, and noting that no other $\mathfrak{q}(l)$ or
$\mathfrak{q}'(l)$ equals
$Q$ for $l \leq L-1$ because $Q$ is good,
the left side of (\ref{eq:induction}) may be written as
\[\mathrm{LS}:=\left|\sum_{\a_1,\ldots,\a_{Q-1}}^*
\left(\prod_{l=1}^{L-1} M_l[\alpha_{\mathfrak{q}(l)},\alpha_{\mathfrak{q}'(l)}]\right)
\mathop{\sum_{\a_Q=1}^N}_{\a_Q \notin \{\a_1,\ldots,\a_{Q-1}\}}
M_L[\a_Q,\a_Q]\right|.\]
Then applying $\Tr M_L=0$, if $Q=1$, then LS vanishes and there is nothing
further to do. If $Q>1$, we get
\begin{align*}
\mathrm{LS}&=\left|\sum_{\a_1,\ldots,\a_{Q-1}}^*
\left(\prod_{l=1}^{L-1} M_l[\a_{\mathfrak{q}(l)},\a_{\mathfrak{q}'(l)}]\right)
\sum_{\a_Q \in \{\a_1,\ldots,\a_{Q-1}\}} M_L[\a_Q,\a_Q]\right|\\
&\leq \sum_{k=1}^{Q-1}
\left|\sum_{\a_1,\ldots,\a_{Q-1}}^*
\left(\prod_{l=1}^{L-1} M_l[\a_{\mathfrak{q}(l)},\a_{\mathfrak{q}'(l)}]\right)
M_L[\a_k,\a_k]\right|.
\end{align*}
We may apply the induction hypothesis to each of the $Q-1$ terms of the above
sum: Define $\tilde{\mathfrak{q}},\tilde{\mathfrak{q}}':[L] \to [Q-1]$ by
$\tilde{\mathfrak{q}}(l)=\mathfrak{q}(l)$ and $\tilde{\mathfrak{q}}'(l)=\mathfrak{q}'(l)$ for $l \in [L-1]$ and
$\tilde{\mathfrak{q}}(L)=\tilde{\mathfrak{q}}'(L)=k$. Each $q \in [Q-1]$ that was good
for $i,j$ remains good for $\tilde{i},\tilde{j}$, except possibly $q=k$.
Thus the number of good indices for $\tilde{\mathfrak{q}},\tilde{\mathfrak{q}}'$
is at least
$\check{t}:=\max(t-2,0)$. The induction hypothesis implies
\[\mathrm{LS} \leq (Q-1)\cdot C(L,Q-1,B,\check{t})N^{Q-1-\check{t}/2}
\leq (Q-1)\cdot C(L,Q-1,B,\check{t})N^{Q-t/2}.\]

{\bf Case 2:} There exists a good index $q$ and distinct $l \neq l' \in [L]$
such that one of $\mathfrak{q}(l),\mathfrak{q}'(l)$ and one of
$\mathfrak{q}(l'),\mathfrak{q}'(l')$ equal $q$.
For notational convenience, assume without loss of generality that $q=Q$,
$l=L-1$, and $l'=L$. By possibly replacing
$M_{L-1}$ and/or $M_L$ by $M_{L-1}^T$ and/or $M_L^T$, we may further assume
$\mathfrak{q}'(L-1)=\mathfrak{q}(L)=Q$.

Summing first over $\a_1,\ldots,\a_{Q-1}$ and then over
$\a_Q$ as in Case 1, and noting that no $\mathfrak{q}(l)$ or $\mathfrak{q}'(l)$ equals $Q$ for $l \leq
L-2$ because $Q$ is good, the left side of (\ref{eq:induction}) may be
written as
\[\mathrm{LS}:=\left|\sum_{\a_1,\ldots,\a_{Q-1}}^*
\left(\prod_{l=1}^{L-2} M_l[\a_{\mathfrak{q}(l)},\a_{\mathfrak{q}'(l)}]\right)
\mathop{\sum_{\a_Q=1}^N}_{\a_Q \notin \{\a_1,\ldots,\a_{Q-1}\}}
M_{L-1}[\a_{\mathfrak{q}(L-1)},\a_Q] M_L[\a_Q,\a_{\mathfrak{q}'(L)}]\right|.\]
Define $M=M_{L-1}M_L$. Then $\|M\| \leq B^2$, and
\begin{align*}
\mathrm{LS}&=\Bigg|\sum_{\a_1,\ldots,\a_{Q-1}}^*
\left(\prod_{l=1}^{L-2} M_l[\a_{\mathfrak{q}(l)},\a_{\mathfrak{q}'(l)}]\right)
\Bigg(M[\a_{\mathfrak{q}(L-1)},\a_{\mathfrak{q}'(L)}]\\
&\hspace{2in}-\sum_{\a_Q \in \{\a_1,\ldots,\a_{Q-1}\}}
M_{L-1}[\a_{\mathfrak{q}(L-1)},\a_Q]M_L[\a_Q,\a_{\mathfrak{q}'(L)}]\Bigg)\Bigg|\\
&\leq \left|\sum_{\a_1,\ldots,\a_{Q-1}}^*
\left(\prod_{l=1}^{L-2} M_l[\a_{\mathfrak{q}(l)},\a_{\mathfrak{q}'(l)}]\right)
M[\a_{\mathfrak{q}(L-1)},\a_{\mathfrak{q}'(L)}]\right|\\
&\hspace{1in}+\sum_{k=1}^{Q-1}
\left|\sum_{\a_1,\ldots,\a_{Q-1}}^*
\left(\prod_{l=1}^{L-2} M_l[\a_{\mathfrak{q}(l)},\a_{\mathfrak{q}'(l)}]\right)
M_{L-1}[\a_{\mathfrak{q}(L-1)},\a_k]M_L[\a_k,\a_{\mathfrak{q}'(L)}]\right|.
\end{align*}
We may again apply the induction hypothesis to each term of the above sum:
For the first term, each original good index $q \in [Q-1]$
remains good, except possibly
$k:=\mathfrak{q}(L-1)=\mathfrak{q}'(L)$ if $k$ was originally good but now $\Tr M \neq 0$.
Hence for this first term there are still at least
$\check{t}:=\max(t-2,0)$ good indices.
The other $Q-1$ terms are present only if $Q>1$.
For each of these terms, each original good index $q \in [Q-1]$
remains good, except possibly $q=k$---hence there are also at least 
$\check{t}$ good indices. Then the induction hypothesis
yields, similarly to Case 1,
\[\mathrm{LS} \leq \Big(C(L-1,Q-1,B^2,\check{t})+
(Q-1)\cdot C(L,Q-1,B,\check{t}) \Big) N^{Q-t/2}.\]
This concludes the induction in both cases, upon setting
$C(L,Q,B,t)=C(L-1,Q-1,B^2,\check{t})+(Q-1) \cdot C(L,Q-1,B,\check{t})$.
\end{proof}

This concludes the proof of Theorem \ref{thm:momentapprox}. Finally, we prove
Corollary \ref{cor:stieltjesapprox} which establishes the approximation at the
level of Stieltjes transforms.

\begin{proof}[Proof of Corollary \ref{cor:stieltjesapprox}]
Under the given conditions, there exists a constant $C_0>0$ such that
$|\tau(w^l)| \leq C_0^l$ for all $N$ and $l \geq 0$, and also
$|N^{-1}\Tr W^l| \leq \|W\|^l \leq C_0^l$ a.s.\ for all
$l \geq 0$ and all sufficiently large $N$.
Fix $z \in \C^+$ with $|z|>C_0$. Then
$m_w(z)=-\sum_{l=0}^\infty z^{-(l+1)}\tau(w^l)$ and
$m_W(z)=-N^{-1} \Tr(z-W)^{-1}=-\sum_{l=0}^\infty z^{-(l+1)}N^{-1}\Tr
W^l$ define convergent series for all large $N$.
For any $\eps>0$, there exists $L$ such that
\[\left|\sum_{l=L+1}^\infty z^{-(l+1)}N^{-1}\Tr W^l\right|<\eps,\qquad
\left|\sum_{l=L+1}^\infty z^{-(l+1)}\tau(w^l)\right|<\eps\]
for all large $N$, while by Theorem \ref{thm:momentapprox},
as $N \to \infty$
\[\left|\sum_{l=0}^L z^{-(l+1)}N^{-1}\Tr W^l-z^{-(l+1)}\tau(w^l)\right| \to
0.\]
Hence $\limsup_{N \to \infty} |m_W(z)-m_w(z)| \leq 2\eps$ a.s.,
and the result follows by taking $\eps \to 0$.
\end{proof}

\section{Analysis of fixed-point equations}\label{appendix:fixedpoint}
We analyze the fixed-point equations
(\ref{eq:Warecursion}--\ref{eq:Wbrecursion}) and conclude the proof of the main
result, Theorem \ref{thm:Wdistribution}. The analysis follows arguments
similar to those in \cite{couilletetal} and \cite{dupuyloubaton}.

\begin{lemma}[\cite{caratheodorylandau}]\label{lemma:caratheodory}
Let $\Omega \subseteq \C$ be a connected open set, let $E \subseteq \Omega$ be
any set with an accumulation point in $\Omega$, let $a,b \in \C$ be any two
distinct fixed values, and let $\{f_n\}_{n=1}^\infty$ be a sequence of analytic
functions $f_n:\Omega \to \C$.
If $f_n(z) \notin \{a,b\}$ for all $z \in \Omega$ and $n \geq 1$, and if
$\lim_{n \to \infty} f_n(z)$ exists (and is finite) for each $z \in E$, then
$\{f_n\}_{n=1}^\infty$ converges uniformly on compact subsets of $\Omega$ to
an analytic function.
\end{lemma}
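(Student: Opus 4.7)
The plan is to reduce this to the strong form of Montel's theorem (the Montel--Carath\'eodory normality criterion) together with the identity theorem. The hypothesis that $f_n$ never takes the two values $a$ and $b$ on $\Omega$, together with analyticity (so $f_n$ also never takes $\infty$), means that each $f_n$ omits three fixed points of the Riemann sphere. The strong form of Montel's theorem then asserts that the family $\{f_n\}$ is normal on $\Omega$ in the spherical topology, i.e.\ every subsequence has a further subsequence converging locally uniformly to a meromorphic limit, possibly identically $\infty$.

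Next I would rule out the limit $\infty$ using pointwise convergence on $E$. Indeed, if $(f_{n_k})$ converges locally uniformly (in the spherical metric) to some meromorphic $g:\Omega \to \C \cup \{\infty\}$, then evaluating at any $z_0 \in E$ yields $g(z_0) = \lim_k f_{n_k}(z_0) \in \C$, so $g$ is not identically $\infty$. A standard normal-family argument (e.g., applying Hurwitz's theorem to the analytic functions $1/(f_{n_k} - c)$ for a suitable constant $c$) then upgrades $g$ to a genuinely holomorphic function on $\Omega$.

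The third step is an identity-theorem argument to show that all subsequential limits coincide. If $(f_{n_k})$ and $(f_{n_k'})$ converge locally uniformly to holomorphic functions $g$ and $g'$ respectively, then $g$ and $g'$ agree on $E$, which has an accumulation point in the connected open set $\Omega$. By the identity theorem $g \equiv g'$ on $\Omega$. Thus every subsequence admits a further subsequence converging to one and the same holomorphic function $f$, and a standard compactness argument then forces the full sequence $\{f_n\}$ to converge locally uniformly to $f$ on $\Omega$.

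The main obstacle is really just the invocation of the strong Montel--Carath\'eodory theorem itself; the rest is routine normal-family bookkeeping. The two standard routes to that invocation are either (i) lifting through the universal cover of $\C \setminus \{a,b\}$ via the elliptic modular function to obtain a uniform bound on a suitable auxiliary family, or (ii) Zalcman's rescaling lemma combined with Picard's little theorem. Since the statement is cited from \cite{caratheodorylandau} as a known result, I would simply quote it rather than reprove these classical ingredients.
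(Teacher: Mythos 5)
Your proof follows essentially the same route as the paper's: invoke Montel's fundamental normality test (Montel--Carath\'eodory) to get a normal family, then use the identity theorem on the accumulating set $E$ to force all subsequential limits to agree, and conclude by the standard subsequence argument. The one place where you are slightly more careful than the paper is in acknowledging that Montel--Carath\'eodory gives normality in the \emph{spherical} sense and therefore a subsequential limit could a priori be identically $\infty$; you correctly rule this out using pointwise convergence on $E$ and note the Hurwitz-type upgrade to holomorphy, a detail the paper's one-line proof silently elides.
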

\begin{proof}
The result is originally due to \cite{caratheodorylandau}. It also follows
by the theory of normal families:
$\{f_n\}_{n=1}^\infty$ is a normal family by Montel's fundamental normality
test, see e.g.\ \cite[Section 2.7]{schiff}. Hence every subsequence has a
further subsequence that converges uniformly on compact sets to an analytic
function. All such analytic functions must coincide on $E$, hence they coincide
on all of $\Omega$ by uniqueness of analytic extensions, implying the desired
result.
\end{proof}

In the notation of Theorem \ref{thm:Wdistribution}, denote
$a=(a_1,\ldots,a_k)$, $b=(b_1,\ldots,b_k)$, 
\begin{align*}
f_r(z,b)&=-\frac{1}{n_r}
\Tr\left((z\Id_p+b \cdot H^*H)^{-1}H_r^*H_r\right),\\
g_r(a)&=-\frac{1}{n_r}\Tr_r \left([\Id_{n_+}+FD(a)]^{-1}F\right).
\end{align*}

\begin{lemma}\label{lemma:domainrange}
Under the conditions of Theorem \ref{thm:Wdistribution}:
\begin{enumerate}[(a)]
\item For all $z \in \C^+$ and $b \in (\overline{\C^+})^k$,
$z\Id_p+b \cdot H^*H$ is invertible,
$f_r(z,b) \in \C^+ \cup \{0\}$, and $m_0(z) \in \C^+$ for $m_0$
as defined by (\ref{eq:Wm0}).
\item For all $a \in (\C^+ \cup \{0\})^k$, $\Id_{n_+}+FD(a)$ is invertible and
$g_r(a) \in \overline{\C^+}$.
\end{enumerate}
\end{lemma}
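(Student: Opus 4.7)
The plan is to track the imaginary parts of the relevant matrices directly: the hypotheses on $z$, $b_r$, and $a_r$, combined with Hermiticity of $F$ and $H_r^*H_r$, force the anti-Hermitian parts of $z\Id_p+b\cdot H^*H$ and of $[\Id_{n_+}+FD(a)]^{-1}F$ to have definite sign, from which both invertibility and the half-plane conclusions will follow.

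For (a), I would let $M=z\Id_p+b\cdot H^*H$ and observe that
\[
\Im(M)=\tfrac{M-M^*}{2i}=\Im(z)\Id_p+\sum_{s=1}^k\Im(b_s)H_s^*H_s
\]
is strictly positive definite, since $\Im(z)>0$ and each $H_s^*H_s$ is positive semidefinite. Hence $M$ is invertible, and the identity $\Im(M^{-1})=-M^{-1}\Im(M)M^{-*}$ gives $\Im(M^{-1})<0$ strictly. Pairing against $H_r^*H_r$ (resp.\ $\Id_p$) then yields
\[
\Im\Tr(M^{-1}H_r^*H_r)=\Tr(\Im(M^{-1})H_r^*H_r)\leq 0,
\]
with strict inequality unless $H_r=0$; analogously $\Im\Tr(M^{-1})<0$. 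It follows that $f_r(z,b)\in\C^+\cup\{0\}$ and $m_0(z)\in\C^+$.

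For (b), I would first prove invertibility of $\Id_{n_+}+FD(a)$ by contradiction. If $(\Id+FD(a))v=0$, left-multiplication by $v^*D(a)^*$ gives $v^*D(a)^*v=-(D(a)v)^*F(D(a)v)$, and the right-hand side is real by Hermiticity of $F$. Taking imaginary parts of $v^*D(a)^*v=\sum_r\bar a_r\|v_r\|^2$ yields $\sum_r\Im(a_r)\|v_r\|^2=0$, which forces $v_r=0$ whenever $\Im(a_r)>0$; the remaining alternative under $a_r\in\C^+\cup\{0\}$ is $a_r=0$. Either way $a_rv_r=0$ in every block, so $D(a)v=0$ and hence $v=-FD(a)v=0$. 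For the half-plane conclusion, I would set $X=[\Id+FD(a)]^{-1}F$, note $X^*=F[\Id+D(a)^*F]^{-1}$ (using $F^*=F$), and manipulate $X-X^*$ over a common denominator to obtain the identity
\[
\Im(X)=-X\,\Im(D(a))\,X^*.
\]
Since $\Im(D(a))$ is a positive semidefinite block diagonal matrix with $(r,r)$ block $\Im(a_r)\Id_{n_r}$, the right-hand side is negative semidefinite, so $\Im\Tr_r X=\Tr(P_r\Im(X)P_r)\leq 0$ for the block projection $P_r$. Therefore $g_r(a)=-n_r^{-1}\Tr_r X\in\overline{\C^+}$.

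No step poses a serious obstacle. The one item requiring care is the derivation of $\Im(X)=-X\,\Im(D(a))\,X^*$: writing $X-X^*=A^{-1}F-FB^{-1}$ with $A=\Id+FD(a)$ and $B=A^*=\Id+D(a)^*F$, combining over the common denominator gives $A^{-1}F(D(a)^*-D(a))FB^{-1}$, and one then recognizes $FB^{-1}=X^*$. With this identity in hand, both conclusions of part (b) are immediate.
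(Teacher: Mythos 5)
Your proof is correct. Part (a) is essentially the same calculation as the paper's, just packaged via the identity $\Im(M^{-1})=-M^{-1}\Im(M)M^{-*}$ rather than by expanding $\Tr(TH_r^*H_r)=\Tr\bigl(TH_r^*H_rT^*M^*\bigr)$ term by term; the two are algebraically equivalent and both hinge on $\Im(M)\succ 0$. In part (b) you diverge more substantively. For invertibility, the paper first reduces away the blocks with $a_r=0$ via a block-triangular observation, then replaces $\Id_{n_+}+FD(a)$ by $Q+F(F^\dagger+D(a))$ using the pseudo-inverse of $F$ and shows $F^\dagger+D(a)$ is invertible; your argument by contradiction (multiply $(\Id+FD(a))v=0$ by $v^*D(a)^*$, use Hermiticity of $F$ and $\Im(a_r)\geq 0$ to force $D(a)v=0$, hence $v=0$) accomplishes this in one step without singling out the $a_r=0$ blocks. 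For the half-plane conclusion, the paper temporarily assumes $F$ invertible, writes $g_r$ in terms of $M=(F^{-1}+D(a))^{-1}$, and then appeals to continuity in $F$; your identity $\Im(X)=-X\,\Im(D(a))\,X^*$ for $X=[\Id+FD(a)]^{-1}F$ is valid for arbitrary Hermitian $F$ and so bypasses the continuity step entirely. Your version of (b) is thus a bit more direct, at the cost of requiring the algebraic check that $FB^{-1}=X^*$ (which you carry out correctly); the paper's version trades that algebra for the more familiar Woodbury-style picture and a soft limiting argument.
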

\begin{proof}[Proof of Lemma \ref{lemma:domainrange}]
For any $v \in \C^p$,
\[\Im\left[v^*(z\Id_p+b \cdot H^*H)v\right]
=(\Im z)v^*v+\sum_s (\Im b_s)v^*H_s^*H_sv>0.\]
Hence $z\Id_p+b \cdot H^*H$ is invertible. Letting
$T=(z\Id_p+b \cdot H^*H)^{-1}$,
\begin{align*}
n_rf_r(z,b)=-\Tr TH_r^*H_r
&=-\Tr TH_r^*H_rT^*\left(z\Id_p+b \cdot H^*H\right)^*\\
&=-\overline{z}\Tr TH_r^*H_rT^*-\sum_{s=1}^k \overline{b_s}
\Tr TH_r^*H_rT^*H_s^*H_s.
\end{align*}
As $\Tr TRT^*S$ is real and nonnegative for any Hermitian positive-semidefinite
matrices $R$ and $S$, the above implies $\Im f_r(z,b) \geq 0$.
In fact, as $\Tr TH_r^*H_rT^*>0$ unless $H_r=0$, either
$\Im f_r(z,b)>0$ or $f_r(z,b)=0$. Similarly,
\[pm_0(z)=-\Tr T=-\overline{z}\Tr TT^*-\sum_{s=1}^k \overline{b_s}
\Tr TT^*H_s^*H_s,\]
and as $\Tr TT^*>0$, $\Im m_0(z)>0$. This establishes (a).

For (b), let us first show $\Id_{n_+}+FD(a)$ is
invertible. Note if $a_1=0$, then by the fact that a block matrix
\[\begin{pmatrix} A & B \\ 0 & C \end{pmatrix}\]
is invertible if and only if $A$ and $C$ are invertible, it suffices to show
invertibility of the lower-right $(n_2+\ldots+n_k) \times (n_2+\ldots+n_k)$
submatrix. Hence we may reduce to the case where $a_s \neq 0$, i.e.\ $a_s \in
\C^+$, for all $s$. Suppose $\rank(F)=m$ and let $F^\dagger$ denote the 
pseudo-inverse of $F$, so that $FF^\dagger$ is a projection matrix of rank $m$
onto the column span of $F$. $F^\dagger$ is Hermitian,
since $F$ is. Let $Q$ denote the projection orthogonal to $FF^\dagger$, of rank
$n_+-m$. Then
\[\Id_{n_+}+FD(a)=Q+F(F^\dagger+D(a)).\]
For each $s=1,\ldots,k$, let $P_s$ be the projection of rank $n_s$ such that
$D(a)=\sum_{s=1}^k a_sP_s$. Then for any $v \in \C^{n_+}$,
\[\Im[v^*(F^\dagger+D(a))v]
=\Im[v^*D(a)v]=\sum_s (\Im a_s)v^*P_sv>0,\]
as $v^*F^\dagger v$ and $v^*P_sv$ are real and $\Im a_s>0$ for each $s$.
Hence $F^\dagger+D(a)$ is invertible, so
$\Id_{n_+}+FD(a)$ is of full column rank and thus also invertible.

For the second claim, supposing momentarily that $F$ is
invertible and letting $M=(F^{-1}+D(a))^{-1}$,
\begin{align*}
n_rg_r(a)=-\Tr_r M
&=-\Tr_r \left(M\left(F^{-1}+\sum_{s=1}^k
a_sP_s\right)^*M^*\right)\\
&=-\Tr P_rMF^{-1}M^*-\sum_{s=1}^k\overline{a_s}\Tr P_rMP_sM^*.
\end{align*}
As $\Tr P_rMF^{-1}M^*$ is real and $\Tr P_rMP_sM^*$ is real and
nonnegative, this implies $\Im g_r(a) \geq 0$. By continuity in
$F$, this must hold also when $F$ is not invertible, establishing (b).
\end{proof}

\begin{lemma}\label{lemma:contractivemap}
Let $C,M>0$ and let $\mathcal{S}$ denote the space of $k$-tuples
$b=(b_1,\ldots,b_k)$ such that each $b_r$ is an analytic function
$b_r:\C^+ \to \overline{\C^+}$ and
$\sup_{z \in \C^+:\Im z>M} \|b(z)\| \leq C$. For sufficiently large $C$ and $M$
(depending on $p,n_r,m_r$ and the matrices $H_r$ and $F_{r,s}$ in
Theorem \ref{thm:Wdistribution}):
\begin{enumerate}[(a)]
\item $\rho:\mathcal{S} \times \mathcal{S} \to \R$ defined by
\[\rho(b,\tilde{b}):=\sup_{z \in \C^+:\Im z>M} \|b(z)-\tilde{b}(z)\|\]
is a complete metric on $\mathcal{S}$, and
\item Letting $g=(g_1,\ldots,g_k)$ and $f=(f_1,\ldots,f_k)$ where $g_r$ and
$f_r$ are as above, $b \mapsto g(f(z,b))$ defines a map from $\mathcal{S}$ to
itself, and there exists $c \in (0,1)$
such that for all $b,\tilde{b} \in \mathcal{S}$,
\[\rho(g(f(z,b)),g(f(z,\tilde{b}))) \leq c\rho(b,\tilde{b}).\]
\end{enumerate}
\end{lemma}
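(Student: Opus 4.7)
The plan is to verify completeness in (a) via a normal-families argument and then to exploit the decay $f_r(z,b)=O(1/\Im z)$ for $\Im z$ large to force the composition $g\circ f$ to be contractive on $\{z:\Im z>M\}$ once $M$ is chosen large enough.

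For (a), the metric axioms are immediate, so only completeness requires work. If $\{b^{(n)}\}$ is Cauchy in $\rho$, it converges uniformly on $\{z\in\C^+:\Im z>M\}$ to a limit $b$ analytic there by the Weierstrass theorem. To extend $b$ to all of $\C^+$, note each $b_r^{(n)}$ maps $\C^+$ into $\overline{\C^+}$ and thus omits the open lower half-plane, so $\{b_r^{(n)}\}$ is a normal family on $\C^+$ by Montel's theorem; Lemma~\ref{lemma:caratheodory} applied component-wise then yields a unique analytic extension of each $b_r$ to all of $\C^+$. The limit takes values in the closed set $\overline{\C^+}$ and the uniform bound by $C$ on $\{\Im z>M\}$ is preserved, so $b\in\mathcal{S}$.

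For (b), the key ingredient throughout is the resolvent bound $\|(z\Id_p+b\cdot H^*H)^{-1}\|\le 1/\Im z$, valid whenever $\Im z>0$ and $\Im b_s\ge 0$ for all $s$, since the anti-Hermitian part of $z\Id_p+b\cdot H^*H$ is then at least $(\Im z)\Id_p$. This yields $|f_r(z,b)|\le (p/n_r)\|H_r\|^2/\Im z$, so for $M$ large (depending on $p,n_r,\|H_r\|,\|F\|$) we have $\|FD(f(z,b))\|\le 1/2$, whence a Neumann series gives $\|[\Id_{n_+}+FD(f(z,b))]^{-1}\|\le 2$ and $|g_r(f(z,b))|\le 2\|F\|$. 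Combined with the range constraints from Lemma~\ref{lemma:domainrange}, this shows that $g\circ f$ maps $\mathcal{S}$ into itself once $C\ge 2\|F\|$.

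For contractivity I apply the resolvent identity $A^{-1}-B^{-1}=A^{-1}(B-A)B^{-1}$ twice. The first application gives
\[f_r(z,b)-f_r(z,\tilde b)=\tfrac{1}{n_r}\sum_{s=1}^k(b_s-\tilde b_s)\Tr\!\left[(z\Id_p+b\cdot H^*H)^{-1}H_s^*H_s(z\Id_p+\tilde b\cdot H^*H)^{-1}H_r^*H_r\right],\]
and the resolvent bound above yields $\|f(z,b)-f(z,\tilde b)\|\le K_1(\Im z)^{-2}\|b-\tilde b\|$. A parallel expansion of $g_r(a)-g_r(\tilde a)$, together with $\|[\Id_{n_+}+FD(\cdot)]^{-1}\|\le 2$ when $a,\tilde a$ arise as values of $f$, gives $\|g(a)-g(\tilde a)\|\le K_2\|a-\tilde a\|$. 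Composing, the Lipschitz constant of $b\mapsto g(f(z,b))$ on $\{\Im z>M\}$ is at most $K_1K_2/M^2$ with $K_1,K_2$ bounded in terms of $p,n_r,k,\|H_r\|,\|F\|$ (all uniform under the hypotheses of Theorem~\ref{thm:Wdistribution}), so enlarging $M$ further makes this factor strictly below $1$. The main subtlety is choosing $C$ and $M$ compatibly: $C$ must absorb the sup-norm of $g\circ f$, while $M$ must then be enlarged both to justify the resolvent bound on $\Id_{n_+}+FD(f(z,b))$ and to drive the composite Lipschitz constant below one.
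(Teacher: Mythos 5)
Your proposal is correct and follows essentially the same route as the paper. Part (a) uses the same mechanism: convergence on the far strip, then Lemma~\ref{lemma:caratheodory} (Caratheodory--Landau / normal families) to propagate the limit to all of $\C^+$ while preserving the range and the norm bound. Part (b) uses the same two inputs the paper uses: the resolvent bound $\|(z\Id_p+b\cdot H^*H)^{-1}\|\le(\Im z)^{-1}$ (which the paper cites from \cite{couilletetal} and you derive directly from the anti-Hermitian part; both are fine), the consequent smallness of $f_r$ and hence of $FD(f(z,b))$ so that a Neumann series controls $[\Id_{n_+}+FD(\cdot)]^{-1}$, the range statement from Lemma~\ref{lemma:domainrange}, and the resolvent identity applied to both $f$ and $g$ to produce the Lipschitz factor of order $(\Im z)^{-2}$. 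Your remark on choosing $C$ and $M$ compatibly matches the paper's implicit logic; there is no circularity since the Lipschitz constants $K_1,K_2$ do not depend on $C$, so one first fixes $C$ to dominate $\|g(f(z,\cdot))\|$ and then enlarges $M$ to drive $K_1K_2/M^2$ below $1$.
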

\begin{proof}
For part (a), $\rho$ is clearly nonnegative, symmetric, and satisfies the
triangle inequality. By definition of $\mathcal{S}$,
$\rho(b,\tilde{b})<\infty$ for all $b,\tilde{b} \in \S$. By uniqueness of
analytic extensions, $\rho(b,\tilde{b})=0 \Leftrightarrow b=\tilde{b}$, hence
$\rho$ is a metric.
If $\{b^{(l)}\}_{l=1}^\infty$ is a Cauchy sequence in $(\S,\rho)$,
then for each $z \in \C^+$ with $\Im z>M$,
$\{b^{(l)}(z)\}_{l=1}^\infty$ is Cauchy in $(\overline{\C^+})^k$
and hence converges to some
$b(z)=(b_1(z),\ldots,b_k(z)) \in (\overline{\C^+})^k$. Then
Lemma \ref{lemma:caratheodory} implies each $b_r(z)$ has an analytic extension
to all of $\C^+$, and $b_r^{(l)} \to b_r$ uniformly over compact subsets of
$\C^+$. This implies $b_r(z) \in \overline{\C^+}$ for all $z \in \C^+$ and
$\sup_{z \in \C^+:\Im z>M} \|b(z)\| \leq C$, so $b \in \S$. Furthermore
$\rho(b^{(l)},b) \to 0$, hence $(\S,\rho)$ is complete.

For part (b), clearly if $b=(b_1,\ldots,b_k)$ is a $k$-tuple of
analytic functions on $\C^+$, then $g(f(z,b))$ is as well. Now
consider $z \in \C^+$ with $\Im z>M$ and fixed values
$b \in (\overline{\C^+})^k$ with $\|b\| \leq C$, and define
\begin{equation}\label{eq:TR}
T=\left(z\Id_p+b \cdot H^*H\right)^{-1},\qquad
R=\left(\Id_{n_+}+FD(f(z,b))\right)^{-1},
\end{equation}
where invertibility of these quantities follows from Lemma
\ref{lemma:domainrange}. Since $H_s^*H_s$ is positive-semidefinite,
\cite[Lemma 8]{couilletetal} implies $\|T\| \leq (\Im z)^{-1}$. Then if
$C,M>0$ (depending on $p,n_r,m_r,H_r,F_{r,s}$) are sufficiently large,
we have $|f_r(z,b)| \leq C(\Im z)^{-1}$, $\|FD(f(z,b))\|<1/2$, $\|R\|<2$, and
$\|g(f(z,b))\| \leq C$. 
This establishes that for sufficiently large $C,M>0$,
if $b \in \mathcal{S}$, then $g(f(z,b)) \in \mathcal{S}$.

Next, consider also $\tilde{b} \in (\overline{\C^+})^k$ with $\|\tilde{b}\| \leq
C$, and define
$\tilde{T}$ and $\tilde{R}$ by (\ref{eq:TR}) with $\tilde{b}$ in place of $b$.
For each $s=1,\ldots,k$, let $P_s$ be the projection such that
$D(a)=\sum_{s=1}^k a_sP_s$. 
Then by the matrix identity $A^{-1}-(A+E)^{-1}=A^{-1}E(A+E)^{-1}$,
\begin{align*}
f_r(z,b)-f_r(z,\tilde{b})
&=\frac{1}{n_r}\Tr\left(\tilde{T}(T^{-1}-\tilde{T}^{-1})TH_r^*H_r\right)\\
&=\frac{1}{n_r}\sum_{s=1}^k(b_s-\tilde{b}_s)\Tr\left(\tilde{T}
H_s^*H_sTH_r^*H_r\right),\\
g_r(f(z,b))-g_r(f(z,\tilde{b}))&=\frac{1}{n_r} \Tr
P_r\tilde{R}(R^{-1}-\tilde{R}^{-1})RF\\
&=\frac{1}{n_r}\sum_{s=1}^k (f_s(z,b)-f_s(z,\tilde{b}))\Tr P_r\tilde{R}FP_sRF.
\end{align*}
Then $g(f(z,b))-g(f(z,\tilde{b}))=M^{(2)}M^{(1)}(b-\tilde{b})$
for the matrices $M^{(1)},M^{(2)} \in \C^{k \times k}$ having entries
\[M^{(1)}_{rs}=\frac{1}{n_r}
\Tr\left(\tilde{T}H_s^*H_sTH_r^*H_r\right),\;\;
M^{(2)}_{rs}=\frac{1}{n_r}\Tr P_r\tilde{R}FP_sRF.\]
For sufficiently large $C,M>0$, we have $\|T\| \leq (\Im z)^{-1}$,
$\|\tilde{T}\| \leq (\Im z)^{-1}$,
$\|M^{(1)}\| \leq C(\Im z)^{-2}$, $\|R\|<2$, $\|\tilde{R}\|<2$, and
$\|M^{(2)}\| \leq C$, hence $\|M^{(2)}M^{(1)}\| \leq C^2(\Im z)^{-2}
\leq C^2M^{-2}$. Increasing $M$ if necessary so that $C^2M^{-2}<1$, this yields
part (b).
\end{proof}

We conclude the proof of Theorem \ref{thm:Wdistribution} using these lemmas,
Corollary \ref{cor:stieltjesapprox}, and Lemma \ref{lemma:cauchycomputation}.
\begin{proof}[Proof of Theorem \ref{thm:Wdistribution}]
Let $C,M>0$ be ($p,n_r,m_r$-dependent values) such that the conclusions of
Lemma \ref{lemma:contractivemap} hold. Increasing $C$ if necessary, assume
$\|b^{(0)}\|<C$
where $b^{(0)}=(b_1^{(0)},\ldots,b_k^{(0)})$ are the initial values for the
iterative procedure of part (c).
Lemma \ref{lemma:contractivemap} and the Banach fixed point theorem imply
the existence of a unique point $b \in \S$ such that $g(f(z,b))=b$. Defining
$a=f(z,b)$, Lemma \ref{lemma:domainrange} implies $a \in (\C^+ \cup \{0\})^k$
for each $z \in \C^+$. Then $a_r,b_r$ satisfy (\ref{eq:Warecursion}) and
(\ref{eq:Wbrecursion}) for each $z \in \C^+$ by construction, which verifies
existence in part (a). For part (c), define the constant functions
$\tilde{b}^{(0)}_r(z) \equiv b_r^{(0)}$ over $z \in \C^+$.
Then $\tilde{b}^{(0)}:=(\tilde{b}_1^{(0)},\ldots,\tilde{b}_r^{(0)}) \in \S$.
Define iteratively $\tilde{b}^{(t+1)}=g(f(z,\tilde{b}^{(t)}))$. Then
Lemma \ref{lemma:contractivemap} implies
\[c\rho(b,\tilde{b}^{(t)}) \geq \rho(g(f(z,b)),g(f(z,\tilde{b}^{(t)})))
=\rho(b,\tilde{b}^{(t+1)}),\]
for $b$ the above fixed point and some $c \in (0,1)$.
Hence $\rho(b,\tilde{b}^{(t)}) \to 0$ as $t \to \infty$. This implies
by Lemma \ref{lemma:caratheodory} that $\tilde{b}^{(t)}(z) \to b(z)$ for all
$z \in \C^+$, which establishes part (c) upon noting that $\tilde{b}_r^{(t)}(z)$
is exactly the value $b_r^{(t)}$ of the iterative procedure applied at
$z$. Part (c) implies uniqueness in part (a),
since $(b_1^{(t)},\ldots,b_k^{(t)})$ would not converge to
$(b_1,\ldots,b_k)$
if this iterative procedure were initialized to a different fixed point.
For part (b), Lemma \ref{lemma:domainrange} verifies that
$m_0(z) \in \C^+$ for $z \in \C^+$. As $b_1(z),\ldots,b_k(z)$ are analytic,
$m_0(z)$ is also analytic. Furthermore, as $b \in \mathcal{S}$,
$b_1(z),\ldots,b_k(z)$ remain bounded as $\Im z \to \infty$, 
so $m_0(z) \sim -z^{-1}$ as $\Im z \to
\infty$. Then $m_0$ defines the Stieltjes transform of a probability measure
$\mu_0$ by \cite[Lemma 2]{geronimohill}.

It remains to verify that $\mu_0$ approximates
$\mu_W$. Let $f_{rs},g_r,h_r \in \A$ be the free deterministic equivalent
constructed by Lemma \ref{lemma:freeconstruction}, and let
$N=p+\sum_r m_r+\sum_r n_r$. Uniqueness of the
solution $a_r,b_r$ to (\ref{eq:Warecursion})
and (\ref{eq:Wbrecursion}) in the stated domains implies that
the analytic functions $a_1,\ldots,a_k,b_1,\ldots,b_k$ in Lemma
\ref{lemma:cauchycomputation} must coincide with this solution for $z \in
\mathbb{D}$. Then Lemma \ref{lemma:cauchycomputation} implies, for
$z \in \mathbb{D}$,
\[m_w(z):=\tau((w-z)^{-1})=\frac{p}{N}m_0(z)-\frac{N-p}{Nz}.\]
The conditions of Corollary \ref{cor:stieltjesapprox} are satisfied by
Lemma \ref{lemma:freeconstruction}, so Corollary \ref{cor:stieltjesapprox}
implies $m_{\tilde{W}}(z)-m_w(z) \to 0$ as $p,n_r,m_r \to \infty$,
pointwise a.s.\ over $\mathbb{D}$, where $\tilde{W} \in \C^{N \times N}$ is
the embedding of $W$ and $m_{\tilde{W}}$ is its empirical spectral measure.
As
\[m_{\tilde{W}}(z)=\frac{p}{N}m_W(z)-\frac{N-p}{Nz},\]
we have $m_W(z)-m_0(z) \to 0$ pointwise a.s.\ over $\mathbb{D}$.
As $m_W-m_0$ is uniformly bounded over $\{z \in \C^+:\Im z>\eps\}$ for any
$\eps>0$, Lemma \ref{lemma:caratheodory} implies $m_W(z)-m_0(z) \to 0$
pointwise a.s.\ for $z \in \C^+$.
Hence $\mu_W-\mu_0 \to 0$ vaguely a.s.\ (see, e.g.,
\cite[Theorem B.9]{baisilversteinbook}). By the conditions of the theorem and
\cite{yinetal}, $\|W\|$ is almost surely bounded by a constant for all large
$p,n_r,m_r$. Furthermore, by Lemma \ref{lemma:freeconstruction},
we have $\tau(w^l) \leq \|w\|^l \leq C^l$
for some constant $C>0$ and all $l \geq 0$, so $m_w$ and $m_0$ are Stieltjes
transforms of probability measures with bounded support. Then
the convergence $\mu_W-\mu_0 \to 0$ holds weakly a.s., concluding the proof
of the theorem.
\end{proof}

\section{Free probability constructions}\label{appendix:freeprobdetails}
We construct the spaces $(\A,\tau,p_1,\ldots,p_d)$
in Examples \ref{ex:semicircle}, \ref{ex:MP}, \ref{ex:deterministic}, and
point the reader to the relevant references that
establish Lemma \ref{lemma:freeconstruction}.

Recall that a von Neumann algebra $\A$ is a sub-$*$-algebra of the space of
bounded linear operators $B(H)$ acting on a Hilbert space $H$, such that $\A$
is $\sigma$-weakly closed and contains the identity.
The trace $\tau$ is positive, faithful, and normal if
$\tau(a^*a) \geq 0$ for all $a \in \A$, $\tau(a^*a)=0$ only if $a=0$,
and $\tau$ is $\sigma$-weakly continuous. (See I.9.1.2 and III.2.1.4 of
\cite{blackadar} for equivalent topological characterizations.) 
$\B$ is a von Neumann sub-algebra of $\A$ if it is algebraically and
$\sigma$-weakly closed.

\begin{lemma}\label{lemma:individualconstruction}
Rectangular probability spaces $(\A,\tau,p_1,\ldots,p_d)$ satisfying the
properties of Examples \ref{ex:semicircle}, \ref{ex:MP}, and
\ref{ex:deterministic} exist, such that in each example, $\A$ is a von Neumann
algebra and $\tau$ is a positive, normal, and faithful trace.
\end{lemma}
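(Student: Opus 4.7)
The plan is to treat the three examples separately, realizing each as a tracial von Neumann algebra equipped with a block projection structure. Since the statements test only the compressed traces $\tau_r$, it suffices in each case to exhibit a known tracial von Neumann algebra containing an element with the prescribed distribution, then incorporate projections of the prescribed weights $N_r/N$.

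For Example \ref{ex:deterministic} the construction is immediate: take $(\A,\tau,p_1,\ldots,p_d)=(\C^{N\times N},N^{-1}\Tr,P_1,\ldots,P_d)$ and $b_i=B_i$. This finite-dimensional $*$-algebra is automatically a von Neumann algebra carrying a positive, faithful, normal normalized trace, and (\ref{eq:exdeterministic}) is tautological. For Example \ref{ex:semicircle} I would invoke the standard full Fock space construction (see \cite{voiculescubook}) to obtain a tracial von Neumann algebra $(M,\phi)$ containing a self-adjoint semicircular element $s$ with the required moments. Let $\Delta$ be a finite-dimensional abelian von Neumann algebra generated by pairwise-orthogonal projections $q_1,\ldots,q_d$ summing to $1$, equipped with the trace $\psi(q_r)=N_r/N$. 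Form $(\A,\tau):=(M\otimes\Delta,\phi\otimes\psi)$ with $p_r:=1_M\otimes q_r$ and $g:=s\otimes q_r$. Then $g=g^*$, $p_rgp_r=g$, and $\tau_r(g^l)=\phi(s^l)$, and positivity, faithfulness, and normality of $\tau$ follow from the corresponding properties of $\phi$ and $\psi$ on the tensor product.

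Example \ref{ex:MP} is the more delicate case because $g$ must lie in the off-diagonal corner $p_{r_1}\A p_{r_2}$, and a pure tensor-product construction no longer suffices. The natural approach is to work inside a matrix amplification $M_d(M)$ of a tracial von Neumann algebra $(M,\phi)$ containing a circular element $c$ whose $c^*c$ has Marcenko--Pastur law $\nu_{N_{r_2}/N_{r_1}}$, and to equip $M_d(M)$ with the weighted trace of the rectangular probability space framework of \cite{benaychgeorges}, which assigns mass $N_r/N$ to the $(r,r)$ diagonal matrix unit. Placing $g$ in the $(r_1,r_2)$ slot, the required moments of $g^*g$ under $\tau_{r_2}$ and of $gg^*$ under $\tau_{r_1}$ are then obtained by direct computation, with the factor $N_{r_2}/N_{r_1}$ in the second identity emerging from the ratio of block weights.

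The main obstacle is verifying that the weighted trace on $M_d(M)$ genuinely satisfies $\tau(ab)=\tau(ba)$ when the weights differ: the naive formula $\sum_r(N_r/N)\phi(A_{rr})$ fails this property unless all weights coincide, so one must instead follow the precise rectangular construction of \cite{benaychgeorges}, where the off-diagonal entries are normalized so that the trace property and the remaining von Neumann algebra conditions (positivity, faithfulness, normality) all hold simultaneously. I would cite that construction rather than redo it, and then verify by direct computation that the element placed in the $(r_1,r_2)$ slot has the prescribed compressed moments.
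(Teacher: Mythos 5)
Your handling of Examples \ref{ex:deterministic} and \ref{ex:semicircle} is correct. The deterministic case coincides with the paper. For the semicircular case the paper works in $M_d(L^\infty(\Omega,\mathbb{P}))$ with the weighted functional $\tau(a)=N^{-1}\E[\sum_r N_r a_{rr}]$, whereas your tensor-product $(M\otimes\Delta,\phi\otimes\psi)$ is cleaner: with $\Delta$ abelian the trace property is automatic, while the paper's weighted functional is \emph{not} a trace on all of $M_d(L^\infty)$ once the $N_r$ differ (with $a,b$ the $(1,2)$ and $(2,1)$ matrix units, $\tau(ab)=N_1/N\ne N_2/N=\tau(ba)$). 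The paper's argument survives because the generated sub-algebra $\langle\D,g\rangle$ sits in the diagonal sub-algebra, on which the weighted functional is tracial, but the lemma as stated asserts that $(\A,\tau)$ is itself a tracial von Neumann algebra, which the paper's $\A$ does not literally satisfy.

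Your instinct to treat Example \ref{ex:MP} as the delicate case is exactly right, and it in fact points to a flaw in the paper's own construction. The paper places a scalar $X$ with $|X|^2\sim\nu_{N_{r_2}/N_{r_1}}$ at the $(r_1,r_2)$ slot of $M_d(L^\infty)$, which gives $\tau_{r_1}((gg^*)^l)=\E[|X|^{2l}]=\tau_{r_2}((g^*g)^l)$ for $l\ge1$. That contradicts the relation $\tau_{r_1}((gg^*)^l)=(N_{r_2}/N_{r_1})\tau_{r_2}((g^*g)^l)$ which Example \ref{ex:MP} states as a consequence of cyclicity of $\tau$: cyclicity fails for the weighted functional, and moreover no choice of tracial $\tau$ can repair a scalar placed at a single $(r_1,r_2)$ matrix unit, because cyclicity forces the compressed moments of $gg^*$ and $g^*g$ to differ by the factor $N_{r_2}/N_{r_1}$ while a one-dimensional corner forces them to be equal. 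Cyclicity of $\tau$ is used downstream, e.g.\ in the step leading to (\ref{eq:gammar2}), so the slip is not cosmetic.

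The gap in your proposal is that you identify the obstruction but do not close it: ``follow the precise rectangular construction of \cite{benaychgeorges} and cite it'' is not a construction, since Benaych-Georges' framework \emph{postulates} a tracial $W^*$-probability space with projections of the prescribed weights rather than converting a weighted $M_d$ functional into a trace. A concrete repair close to the matrix picture: take $\A=M_N(L^\infty(\Omega,\mathbb{P}))$ (block sizes $N_1,\ldots,N_d$) with the ordinary normalized trace $\tau=N^{-1}\E\circ\Tr$, which is genuinely tracial, and let the $(r_1,r_2)$ block of $g$ be a rectangular diagonal matrix with $m=\min(N_{r_1},N_{r_2})$ bounded iid entries $X_i$, the common law of $|X_i|^2$ chosen so that $N_{r_2}^{-1}m\,\E[|X_1|^{2l}]=\int x^l\,\nu_{N_{r_2}/N_{r_1}}(x)\,dx$ for all $l\ge 1$. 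Cyclicity of $\tau$ then delivers the companion $gg^*$ identity automatically. Alternatively, realize $p_1,\ldots,p_d$ inside a $\mathrm{II}_1$ factor and produce $g$ via polar decomposition after matching traces of support projections; this is cleaner conceptually but requires more operator-algebra machinery.
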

\begin{proof}
In Examples \ref{ex:semicircle} and \ref{ex:MP}, let $(\Omega,\mathbb{P})$ be a
(classical) probability space and let $\A$ be the von Neumann algebra of
$d \times d$ random matrices with entries in $L^\infty(\Omega,\mathbb{P})$, the
bounded complex-valued random variables on $\Omega$. ($\A$ acts on the Hilbert
space $H$ of length-$d$ random vectors with elements in
$L^2(\Omega,\mathbb{P})$, endowed with inner-product
$v,w \mapsto \E \langle v,w \rangle$.)
Defining $\tau(a)=N^{-1}\E[\sum_{r=1}^d N_ra_{rr}]$, $\tau$ 
is a positive and faithful trace. As $a \mapsto
\E[a_{rr}]$ is weakly continuous and hence $\sigma$-weakly continuous for each
$r=1,\ldots,d$, $\tau$ is normal. Letting $p_r \in \A$ be the (deterministic)
matrix with $(r,r)$ entry 1 and remaining entries 0,
$(\A,\tau,p_1,\ldots,p_d)$ is a rectangular probability space, and
$\tau(p_r)=N_r/N$ for each $r=1,\ldots,d$.
For Example \ref{ex:semicircle}, the element $g \in \A$ may be realized as the
random matrix with $(r,r)$ entry equal to $X$ and all other entries 0, where
$X \in L^\infty(\Omega,\mathbb{P})$ is a random variable with standard
semi-circle distribution on $[-2,2]$.
For Example \ref{ex:MP}, the element $g \in \A$ may be realized as the matrix
with $(r_1,r_2)$ entry equal to $X$ and all other entries 0,
where $X \in L^\infty(\Omega,\mathbb{P})$ is
the square root of a random variable having the Marcenko-Pastur distribution
(\ref{eq:standardMP}) with $\lambda=N_{r_2}/N_{r_1}$.

For Example \ref{ex:deterministic},
we may simply take $(\A,\tau,p_1,\ldots,p_d)$ to be the
rectangular probability space of deterministic $N \times N$ matrices from
Example \ref{ex:matrixspace}. ($\A$ is the space $B(H)$
for $H=\C^N$, and $\tau$ is clearly positive, faithful, and normal as $H$ is
finite-dimensional.) We may take the elements $b_1,\ldots,b_k \in \A$ to be 
the original matrices $B_1,\ldots,B_k$.
\end{proof}

The sub-$*$-algebras $\D$ in the three examples above are isomorphic. They are
also finite-dimensional, hence $\sigma$-weakly closed, so each is a von Neumann
sub-algebra of $\A$.

\begin{proof}[Proof of Lemma \ref{lemma:freeconstruction}]
For each $r=1,\ldots,k$, let $(\A^{(r)},\tau^{(r)},p_0,\ldots,p_{2k})$ be the
space constructed as in Lemma \ref{lemma:individualconstruction} corresponding
to Example \ref{ex:MP} and containing the element $g_r$, satisfying conditions
1, 2, and 4. Let $(\A^{(k+1)},\tau^{(k+1)},p_0,\ldots,p_{2k})$ and
$(\A^{(k+2)},\tau^{(k+2)},p_0,\ldots,p_{2k})$
be the spaces constructed as in Lemma \ref{lemma:individualconstruction}
corresponding to Example \ref{ex:deterministic} and containing the families
$\{h_r\}$ and $\{f_{rs}\}$, respectively, satisfying conditions 1, 2, and 3.
$\D=\langle p_0,\ldots,p_{2k} \rangle$ is a common (up to
isomorphism) $(2k+1)$-dimensional von Neumann sub-algebra of each $\A^{(r)}$,
and each $\tau^{(r)}$ restricts to the same trace on $\D$. Then the
construction of the finite von Neumann amalgamated free product of
$(\A^{(1)},\tau^{(1)}),\ldots,(\A^{(k+2)},\tau^{(k+2)})$ with amalgamation
over $\D$ \cite{voiculescusymmetries,popa} yields a von Neumann algebra
$\A$ with a positive, faithful, and normal trace $\tau$ such that:
\begin{itemize}
\item $\A$ contains (as an isomorphically embedded von Neumann sub-algebra) each
$\A^{(r)}$, where $\A^{(r)}$ contains the common sub-algebra $\D$.
\item Letting $\F:\A \to \D$ and $\F^{(r)}:\A^{(r)} \to \D$
denote the $\tau$-invariant and $\tau^{(r)}$-invariant conditional
expectations, $\F|_{\A^{(r)}} \equiv \F^{(r)}$.
\item $\tau=\tau^{(r)} \circ \F$ for any $r$, so in particular,
$\tau|_{\A^{(r)}}=\tau^{(r)}$.
\item The sub-algebras $\A^{(1)},\ldots,\A^{(k+2)}$ of $\A$ are free with
amalgamation over $\D$ in the $\D$-valued probability space
$(\A,\D,\F)$.
\end{itemize}
(For more details about the amalgamated free product construction, see the
Introduction of \cite{dykema} and also Section 3.8 of \cite{voiculescubook}.)
Since $\tau$ restricts to $\tau^{(r)}$ on each $\A^{(r)}$, conditions 1--4
continue to hold for the
elements $p_r,f_{rs},g_r,h_r$ in $\A$. The generated von Neumann algebra
$\langle D,g_r \rangle_{W^*}$ is contained in $\A^{(r)}$ and similarly for
$\langle D,h_1,\ldots,h_k \rangle_{W^*}$ and $\langle
D,f_{11},f_{12},\ldots,f_{kk} \rangle_{W^*}$, so $\D$-freeness of
these algebras is implied by the $\D$-freeness of the sub-algebras
$\A^{(r)}$. The elements $f_{rs},g_r,h_r$ have bounded norms in the original
algebras $\A^{(1)},\ldots,\A^{(k+2)}$ and hence also in the free product.
\end{proof}

\section{Marcenko-Pastur case}\label{appendix:march-past-case}
We illustrate the proof of Theorem \ref{thm:Wdistribution} in the special setting
of Remark \ref{remark:MP}: 
$Y$ has $n$ i.i.d.\ rows distributed as $\Nor(0,\Sigma)$
and we consider the sample covariance matrix, so that $k=1$,
$B = n^{-1} \Id_n$,
\begin{equation*}
  W = n^{-1} Y^TY = \Sigma^{1/2}G^T G \Sigma^{1/2},
\end{equation*}
and $G$ is an $n \times p$ matrix with i.i.d. $\Nor(0,1/n)$ entries.

Let $O_l$ and $O_r$ be Haar distributed $p \times p$
orthogonal matrices, independent of each other and of $G$,
and let $H = O_l^T \Sigma^{1/2} O_r$ be a
randomized version of $\Sigma^{1/2}$. With slight abuse of notation,
we rewrite
\begin{equation*}
  W = H^TG^T G H,
\end{equation*}
as the spectrum of $W$ is unchanged by the replacement.

\medskip
\textit{Approximation.} \
The matrices $W$, $H$, and $G$ are embedded into larger block matrices, in the
following regions corresponding to the decomposition
$\C^N = \C^p \oplus \C^p \oplus \C^n$
\begin{equation*}
  \begin{bmatrix}
    W & H^* & \cdot \\
    H & \cdot    & G^* \\
    \cdot  & G   & \Id_n
  \end{bmatrix}
\end{equation*}
(we use conjugate transpose notation even though -- at this point --
all matrix entries are real).

More formally, let $N_0=N_1=p$, $N_2=n$ and $N = 2p+n$.
We define diagonal projection matrices $P_i$ having $\Id_{N_i}$ in the $i$th
diagonal block and zeros elsewhere.
Let $\tilde{H}$ and $\tilde{G}$ denote the embeddings of $H$ and $G$
into $N \times N$ matrices, as above.
Then $(\C^{N\times N}, N^{-1} \Tr, P_0,P_1,P_2)$ is a rectangular
probability space as in Example \ref{ex:matrixspace}, and
$\tilde{H}, \tilde{G}$ are independent simple matrices in $\C^{N \times
  N}$, with each being block-orthogonally invariant.
That is, for any orthogonal matrices $O_r \in \R^{N_r \times N_r}$,
$O_1^T H O_0$ has the same law as $H$ and $O_2^TGO_1$ has the same law
as $G$.

For the approximating free model, consider a rectangular probability
space $(\A,\tau,p_0,p_1,p_2)$
with sub-$*$-algebra $\mathcal{D} = \langle p_0,p_1,p_2 \rangle$ and
with deterministic elements $g,h \in \A$ satisfying the following conditions:
\begin{enumerate}[1.]
\item $\tau(p_0) = \tau(p_1) = p/N$, $\tau(p_2)=n/N$.
\item $g$ and $h$ are simple: \quad $p_2gp_1 = g, \quad p_1hp_0 = h$.
\item For each $l \geq 0$,
\begin{equation}
  \label{eq:hHeq}
  \tau_0((h^*h)^l) = p^{-1} \Tr((H^*H)^l).
\end{equation}
\item $g^*g$ has Marcenko-Pastur law: For each $l \geq 0$,
\begin{equation}\label{eq:gGeq}
\tau_1((g^*g)^l)=\int x^l \nu_\lambda(x)dx
\end{equation}
where $\nu_\lambda$ is as in (\ref{eq:standardMP}) with $\lambda=p/n$.
\item $\langle \D,g \rangle_{W^*}$ and $\langle \D,h\rangle_W^*$ are $\D$-free.
\end{enumerate}

Since $h$ is $(1,0)$-simple, (\ref{eq:hHeq}) is enough to specify the full
$\D$-law of $h$, and it implies that $h$ and $\tilde{H}$ are equal in $\D$-law.
Similarly, (\ref{eq:gGeq}) is enough to specify the full $\D$-law of $g$,
and $g$ and $\tilde{G}$ are asymptotically equal in $\D$-law as argued in
Example \ref{ex:MP}. Finally, by definition, $\tau(p_r) = N^{-1} \Tr(P_r)$.
Therefore $(\A,\tau,p_0,p_1,p_2)$ along with $g,h$ forms a free
deterministic equivalent for $(\C^{N\times N}, N^{-1} \Tr,
P_0,P_1,P_2)$ along with $\tilde{G}, \tilde{H}$.

For constants $C,c>0$, suppose that $n, p \to \infty$ in
such a way that $c < p/n < C$ and $\| \Sigma\| < C$. 
Theorem \ref{thm:momentapprox} asserts that the pairs $(\tilde{H},\tilde{G})$
and $(h,g)$ are \textit{jointly} asymptotically equal in $\D$-law a.s.
In particular
\begin{equation*}
  N_r^{-1}\Tr[Q(\tilde{G},\tilde{H})] - \tau_r[Q(g,h)]
     \stackrel{\rm a.s.}{\to} 0
\end{equation*}
for any $*$-polynomial $Q$.
Corollary \ref{cor:stieltjesapprox}
applies this to $Q(g,h) = (h^*g^*gh)^l$ for each
positive integer $l$, and arrives
at a conclusion about approximation of Stieltjes transforms of
$\tilde{W}$ and
$w = h^*g^*gh$, namely that for all large $z \in \C^+$,
\begin{equation*}
  m_{\tilde{W}}(z) - m_w(z) \stackrel{\rm a.s.}{\to} 0.
\end{equation*}
In terms of the non-embedded matrix $W$, denoting $m_0(z)=\tau_0((w-z)^{-1})$,
we deduce
\begin{equation*}
  m_W(z) - m_0(z) \stackrel{\rm a.s.}{\to} 0.
\end{equation*}

\medskip
\textit{Computation.} \
We develop equations for $m_0(z) = \tau_0((w-z)^{-1})$ in the approximating free
model, proving the special case of Lemma \ref{lemma:cauchycomputation}: For 
$C_0>0$ large, there exist analytic functions 
$a_1: \mathbb{D}(C_0) \to \C^+\cup\{0\}$ and
$b_1: \mathbb{D}(C_0) \to \C$ so that equations
(\ref{eq:MPab}--\ref{eq:MPm0}) of Remark \ref{remark:MP} hold.

A suitably specialized form of Proposition \ref{prop:freeness}
shows the role of $\cD$-freeness of $g$ and $h$:
Let $b = g^*g$. If $\mathcal{H}:=\langle \cD, h \rangle_{W^*}$ and
$\langle \cD,b \rangle_{W^*}$ are $\D$-free, then for all $l \geq 1$,
\begin{equation}\label{eq:ghfree}
  \kappa_l^\H(bh, \ldots, bh, b) 
     = \kappa_l^\cD(b\F^\cD (h), \ldots, b\F^\cD (h), b).
\end{equation}

\begin{remark}
  Classical cumulants of a random variable $X$ are derived from the
  log moment generating function
$    \log \E e^{bX} = \sum_{l \geq 1} \kappa_l(X) b^l/l!$.
In the notation of classical multivariate conditional cumulants 
(e.g. \cite{Brillinger1969,speed}), the conditional distribution of
$X$ given a $\sigma$-field 
$\H$ is described by
\begin{equation*}
  \log \E(e^{bX}|\H) = \sum_{l \geq 1} \kappa_l(bX, \ldots, bX \mid \H)/l!.
\end{equation*}
If classical variables $X$ and $Y$ are conditionally independent given
a $\sigma$-field $\cD$, and $\H$ is the $\sigma$-field generated by $\cD$
and $Y$, then
$\E(e^{bX}|\H) = \E(e^{bX}|\cD)$.
Proposition \ref{prop:freeness} may be seen as a non-commutative version of this
identity, written in terms of cumulants.
\end{remark}

Using (\ref{eq:ghfree}), we may express a possibly
complicated transform $\cR_w^\H$ in terms of a simpler one,
namely $\cR_{g^*g}^\cD$.  Indeed, the simpler version of Lemma
\ref{lemma:freecompute} needed here is

\begin{lemma}
Let $\langle \D,g \rangle_{W^*}$ and
$\H:=\langle \D,h\rangle_{W^*}$ be $\cD$-free, and let $w = h^*g^*gh$.
For $c \in \H$ with $\|c\|$ sufficiently small,
\begin{equation}
  \label{eq:HtoD}
  \cR_w^\H (c) = h^*h \, \tau_1(\cR_{g^*g}^\cD (p_1 \tau_1(hch^*))).
\end{equation}
\end{lemma}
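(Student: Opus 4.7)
The plan is to specialize the template of Lemma \ref{lemma:freecompute} to this simpler setting, where $k=1$ and the outer factor is the single element $h$ rather than a family. Setting $b = g^*g$, so that $w = h^*bh$ and $b = p_1 b p_1$, I first expand the $\H$-valued $\cR$-transform as the series $\cR^\H_w(c) = \sum_{l \geq 1} \kappa^\H_l(wc,\ldots,wc,w)$ from (\ref{eq:Rtransform}), which converges for $\|c\|$ sufficiently small by (\ref{eq:cumulantbound}). Using (\ref{eq:kappaid1}) to extract the leading $h^*$ out of the first slot and the trailing $h$ out of the last, and then applying (\ref{eq:kappaid2}) repeatedly to slide each interior $h^*$ from the front of an argument to the back of the preceding one, each cumulant rearranges as
\[
\kappa^\H_l(wc,\ldots,wc,w) \;=\; h^*\,\kappa^\H_l(\underbrace{bhch^*,\ldots,bhch^*}_{l-1 \text{ copies}},\,b)\,h.
\]

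Next I would apply Proposition \ref{prop:freeness}, treating $\H$ as the proposition's ``$\B$'' and $\G := \langle \cD, g \rangle_{W^*}$ as the proposition's ``$\H$''; the $\cD$-freeness of $\G$ and $\H$ is hypothesized, and the tower property (\ref{eq:tower}) supplies the required compatibility $\F^\cD \circ \F^\H = \F^\cD$. Interpreting the arguments above as alternating elements $b \in \G$ and $hch^* \in \H$, the proposition gives
\[
\kappa^\H_l(bhch^*,\ldots,bhch^*,b) \;=\; \kappa^\cD_l\bigl(b\F^\cD(hch^*),\ldots,b\F^\cD(hch^*),\,b\bigr).
\]
Because $h = p_1 h p_0$, the element $hch^*$ is sandwiched by $p_1$ on both sides, so $p_r(hch^*)p_r = 0$ for $r \neq 1$, and consequently $\F^\cD(hch^*) = p_1 \tau_1(hch^*) =: d$. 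The right-hand side therefore equals $\kappa^\cD_l(bd,\ldots,bd,b)$.

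Finally I would sum over $l$ and exchange the sum with the outer multiplications by $h^*$ and $h$ (justified by norm-continuity) to obtain $\cR^\H_w(c) = h^*\,\cR^\cD_{g^*g}(d)\,h$. Since $b = p_1 b p_1$, one further invocation of (\ref{eq:kappaid1}) on the outermost slots of each $\kappa^\cD_l(bd,\ldots,bd,b)$ confines the cumulant, and hence $\cR^\cD_{g^*g}(d)$ itself, to $p_1 \cD p_1$; elements of this one-dimensional subspace are scalar multiples of $p_1$, so $\cR^\cD_{g^*g}(d) = p_1\,\tau_1(\cR^\cD_{g^*g}(d))$. Combining this with the identity $h^* p_1 = h^*$ (immediate from $h = p_1 h p_0$) and the fact that $\tau_1(\cdot) \in \C$ commutes with everything collapses $h^* p_1 \tau_1(\cdots) h$ to $h^* h \cdot \tau_1(\cdots)$, which is precisely the stated identity. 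The main obstacle is really just the bookkeeping in the first paragraph: sliding the $h$'s and $h^*$'s into the right positions so that the arguments of the cumulant acquire the alternating $\G/\H$ structure required by Proposition \ref{prop:freeness}; once this rearrangement is arranged, the rest is essentially formal, with convergence of every series guaranteed by taking $\|c\|$ small relative to $\|b\|$ and $\|h\|$ via the cumulant bound (\ref{eq:cumulantbound}).
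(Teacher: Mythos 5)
Your proposal is correct and follows essentially the same path as the paper: expand $\cR^\H_w$ as a cumulant series, shuffle the $h$'s and $h^*$'s using (\ref{eq:kappaid1}--\ref{eq:kappaid2}) to put the arguments in the form $g^*g\cdot hch^*$, then invoke Proposition \ref{prop:freeness} (with $\H$ and $\G$ in the roles of the proposition's $\B$ and $\H$) plus simplicity of $hch^*$ to descend to $\kappa^\D_l$. The only cosmetic difference is the closing step, where the paper uses the general identity $h^*ah=h^*h\,\tau_1(a)$ for $a\in\D$ directly, while you first observe $\cR^\D_{g^*g}(d)\in p_1\D p_1=\C p_1$; both are correct and equivalent.
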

\begin{proof}
We use expression \eqref{eq:Rtransform} for $\cR_w^\H (c)$ in terms
of cumulants. We have
\begin{align*}
  \kappa_l^\H (wc, \ldots, wc, w) 
    & = h^* \kappa_l^\H (g^*g \, hch^*, \ldots, g^*g \, hch^*, g^*g) h
  \\
    & = h^* \kappa_l^\cD (g^*g \, \F^\cD(hch^*), \ldots, g^*g \,
      \F^\cD(hch^*), g^*g) h.        
\end{align*}
Here the first equality uses properties
(\ref{eq:kappaid1}--\ref{eq:kappaid2}) of $\kappa^\H$, while 
the second equality relies on $\cD$-freeness of $g$ and $h$ through 
(\ref{eq:ghfree}).
Since $hch^*$ is $(1,1)$-simple, we have from \eqref{eq:FD} that
$\F^\cD (hch^*) = \sum p_r \tau_r(hch^*) = p_1 \tau_1(hch^*)$.
Summing over $l$ in the previous display, we obtain
\begin{equation*}
  \cR_w^\H (c) = h^* \cR_{g^*g}^\cD (p_1\tau_1(hch^*))h.
\end{equation*}
Since $h$ is $(1,0)$-simple, for any $a \in \D$ we have
$h^*ah=\sum_r h^*p_rh\tau_r(a)=h^*h\tau_1(a)$. Noting that $\cR^\D_{g^*g}$ is
$\D$-valued, we obtain \eqref{eq:HtoD}.
\end{proof}

To deduce (\ref{eq:MPab}--\ref{eq:MPm0}), note first that since
$\tau_0(\F^\D(a))=\tau_0(a)$, we have
\begin{equation}\label{eq:E0}
-m_0(z)=\tau_0((z-w)^{-1})=\tau_0 \circ \F^\D((z-w)^{-1})
=\tau_0(G_w^\D(z))=\tau_0 \circ \F^\D(G_w^\H(z)),
\end{equation}
the last step applying (\ref{eq:Gprojection}). For fixed
$z \in \mathbb{D}(C_0)$, define
\begin{equation}
  \label{eq:E1}
  \alpha = \tau_1(h G_w^\H (z)h^*), \qquad
  \beta  = \tau_1( \cR^{\cD}_{g^*g}(p_1 \alpha)).
\end{equation}
We can then rewrite the inversion formula (\ref{eq:GRrelation2})
using \eqref{eq:HtoD}, with $c = G_w^\H(z)$, as
\begin{equation}
  \label{eq:E3}
  G_w^\H(z) = (z-\cR_w^\H(G_w^\H(z)))^{-1}
            = (z-h^*h \beta)^{-1}.
\end{equation}
Lemma \ref{lemma:MPR} below computes $\cR_{g^*g}^\D(p_1\alpha)$ using the
$\cR$-transform of the standard Marcenko-Pastur law, yielding
\[\cR_{g^*g}^\D(p_1\alpha)=p_1(1-\lambda \alpha)^{-1}\]
for $\lambda=p/n$. Applying this and (\ref{eq:E3})
to (\ref{eq:E0}--\ref{eq:E1}), we obtain the equations
\begin{equation*}
  \alpha = \tau_1(h(z-h^*h \beta)^{-1} h^*),\quad
  \beta  = (1-\lambda \alpha)^{-1},\quad
-m_0(z)=\tau_0((z-h^*h\beta)^{-1}).
\end{equation*}
Now passing to a power series, then applying \eqref{eq:hHeq} and the
spectral calculus, we obtain
\begin{equation}\label{eq:m0series}
  -m_0(z) 
   = \sum_{l \geq 0}z^{-(l+1)}\tau_0((h^*h)^l) \beta^l
   = \sum_{l \geq 0}z^{-(l+1)}\frac{1}{p}\Tr((H^*H)^l) \beta^l
     = \frac{1}{p} \Tr (z \Id_p - \beta H^*H)^{-1}.
\end{equation}
Using the definition of $\tau_r$ and cyclic property of $\tau$,
a similar calculation shows that
\begin{equation}\label{eq:alphaseries}
  \alpha = \frac{\tau(p_0)}{\tau(p_1)}\tau_0((z-h^*h\beta)^{-1}h^*h)
=\frac{1}{n} \Tr [(z \Id_p - \beta H^*H)^{-1} H^*H].
\end{equation}
Setting $a_1 = -(p/n) \alpha = - \lambda \alpha$ and $b_1 = - \beta$
and recalling that $\Sigma = H^*H$, we recover (\ref{eq:MPab}--\ref{eq:MPm0}).

We check a few analytic details of the above argument: For $z \in
\mathbb{D}(C_0)$ and $C_0$ sufficiently large, $\alpha$ is defined by
the series expansion (\ref{eq:Cauchytransform}) and we have
\[\alpha=\tau_1\left(h\sum_{l=0}^\infty \F^\H(z^{-1}(wz^{-1})^l)h^*\right)
=\sum_{l=0}^\infty z^{-(l+1)}\tau_1(hw^lh^*).\]
For $C_0$ sufficiently large, boundedness of $\tau$ implies that
$\alpha$ is analytic in $z$, with $\alpha \sim z^{-1}\tau_1(hh^*)$ as $z \to
\infty$. Then either $h=0$ in which case $\alpha=0$ for all $z$, or positivity
and faithfulness of $\tau$ yields $\Im \alpha<0$ and $\Im a_1>0$ for all
$z \in \mathbb{D}(C_0)$. Furthermore, this implies $\beta=(1-\lambda
\alpha)^{-1}$ is also analytic in $z$ and bounded over $\mathbb{D}(C_0)$,
which justifies the use of formal series and spectral calculus in
(\ref{eq:m0series}) and (\ref{eq:alphaseries}) for large $C_0$. This establishes
Lemma \ref{lemma:cauchycomputation} in this special case.

\begin{lemma}\label{lemma:MPR}
For any $z \in \C$ with $|z|$ sufficiently small,
$\cR_{g^*g}^\cD(p_1z)=p_1(1-\lambda z)^{-1}$.
\end{lemma}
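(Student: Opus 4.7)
The plan is to reduce the $\D$-valued $\cR$-transform computation to a scalar one in the corner $p_1\A p_1$, where $g^*g$ has the Marcenko-Pastur law by condition~(\ref{eq:gGeq}). First I would use the constraint $p_2gp_1=g$ to deduce $p_1(g^*g)p_1 = g^*g$ and $(g^*g)p_r = p_r(g^*g) = 0$ for $r\ne 1$. Applying the cumulant identities (\ref{eq:kappaid1}--\ref{eq:kappaid2}), this forces every $\D$-valued cumulant $\kappa_l^\D(g^*g\cdot d_1,\ldots,g^*g\cdot d_{l-1},g^*g)$ to lie in $p_1\D p_1 = \C p_1$ and to depend only on the $p_1$-components of $d_1,\ldots,d_{l-1}$. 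In particular $\cR_{g^*g}^\D(p_1 z) = F(z)\,p_1$ for some scalar analytic function $F$, defined on a neighborhood of $0$ by the absolute convergence bound (\ref{eq:cumulantbound}).

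Next I would compute the $\D$-valued Cauchy transform directly. For $d=\eta_0 p_0+\eta_1 p_1+\eta_2 p_2$ with $\|d^{-1}\|$ small, using $d^{-1}=\sum_r \eta_r^{-1}p_r$ together with $p_1(g^*g)p_1=g^*g$, the series (\ref{eq:Cauchytransform}) collapses to
\[
G_{g^*g}^\D(d) \;=\; \eta_0^{-1}p_0 + \eta_2^{-1}p_2 + \Big(\sum_{l\ge 0} \eta_1^{-(l+1)}\tau_1((g^*g)^l)\Big) p_1
\;=\; \eta_0^{-1}p_0 + \eta_2^{-1}p_2 + G_{\nu_\lambda}^\C(\eta_1)\,p_1,
\]
where the last equality uses (\ref{eq:gGeq}) and the definition of the scalar Cauchy transform of $\nu_\lambda$.

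Then I would apply the inversion identity (\ref{eq:GRrelation}) to $a=g^*g$ at the regularized argument $b_\xi := p_1 z + \xi(p_0+p_2)$ with $\xi>0$ small, so that $b_\xi$ is invertible in $\D$ and the series defining $\cR_{g^*g}^\D(b_\xi)$ converges. Since $p_1 b_\xi p_1 = z p_1$, the first step gives $\cR_{g^*g}^\D(b_\xi) = F(z)\,p_1$, independent of $\xi$. Substituting into (\ref{eq:GRrelation}) and reading off the $p_1$-component using the Cauchy transform computed above yields
\[
G_{\nu_\lambda}^\C\!\left(z^{-1}+F(z)\right) \;=\; z,
\]
i.e.\ $z^{-1}+F(z)$ is the compositional inverse of $G_{\nu_\lambda}^\C$ at $z$. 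By the scalar version of Proposition~\ref{prop:GRrelation} applied to the Marcenko-Pastur law, this means $F(z)=\cR_{\nu_\lambda}^\C(z)$. Finally I would invoke the classical identity $\cR_{\nu_\lambda}^\C(z)=(1-\lambda z)^{-1}$ (already used in the derivation of (\ref{eq:alphabeta})) to conclude $\cR_{g^*g}^\D(p_1 z)=p_1(1-\lambda z)^{-1}$.

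The only mildly subtle point is that $p_1 z$ is not invertible in $\D$, which is why the regularization by $\xi(p_0+p_2)$ is needed so that (\ref{eq:GRrelation}) applies; the rest is a direct bookkeeping of how the corner structure $p_1\A p_1$ decouples the $p_1$-block from the inert $p_0,p_2$ blocks. An alternative packaging would be to recognize $(p_1\A p_1,\tau_1)$ as a scalar non-commutative probability space in which $g^*g$ has distribution $\nu_\lambda$ and then quote the standard fact that the $\D$-valued cumulants of an element supported in a corner coincide, up to the projection $p_1$, with the scalar cumulants in that corner; either route gives the same conclusion.
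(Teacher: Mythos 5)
Your proof is correct, but it takes a different route from the paper's own proof of this lemma, while coinciding in spirit with the paper's general argument for Lemma~\ref{lemma:cauchycomputation}. The paper's proof of Lemma~\ref{lemma:MPR} announces a ``simpler direct argument'': it uses the moment--cumulant relations over the non-crossing partition lattice to show $\kappa_l^\D(g^*g,\ldots,g^*g)=p_1\kappa_l^{\C,1}(g^*g,\ldots,g^*g)$ --- precisely the ``standard fact'' about $\D$-valued cumulants of a corner-supported element that you mention at the end as an alternative packaging --- and derives $\cR_{\nu_\lambda}^\C(z)=(1-\lambda z)^{-1}$ from the Marcenko--Pastur functional equation. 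Your route instead passes through Cauchy transforms: you compute $G_{g^*g}^\D$ explicitly block-by-block, invoke the inversion identity (\ref{eq:GRrelation}) after regularizing $p_1z$ to the invertible $b_\xi=p_1z+\xi(p_0+p_2)$, and read off $F(z)=\cR_{\nu_\lambda}^\C(z)$ by uniqueness of the compositional inverse. The paper explicitly acknowledges just before its proof that this $\cR\leftrightarrow G$ projection strategy is what it does in the full proof of Lemma~\ref{lemma:cauchycomputation}, but opts for the cumulant argument in the appendix as being shorter. One trade-off: your approach needs the $\xi$-regularization and some analytic bookkeeping about where (\ref{eq:GRrelation}) and the series (\ref{eq:Cauchytransform}) converge, whereas the cumulant argument never requires the $\cR$-transform argument to be invertible and so avoids these concerns, at the cost of invoking the M\"obius-inversion combinatorics of $\mathrm{NC}(l)$. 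Both routes are valid, and yours has the advantage of running closely parallel to the general proof, making the appendix genuinely illustrative of the machinery used there.
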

\begin{proof}
We first verify that
the Marcenko-Pastur law $\nu_\lambda$ given in (\ref{eq:standardMP}) has
$\cR$-transform $\cR(z)=(1-\lambda z)^{-1}$: Indeed, its
Stieltjes transform $m(z)$ satisfies the functional equation
  \begin{equation}\label{eq:FE}
    m(z) = (1-\lambda-\lambda z m(z) - z)^{-1}
  \end{equation}
  for each $z\in\C^+$ \cite[eq.(1.4)]{silv95}---this is the limiting
  version of (\ref{eq:mpequation}) for $\Sigma = \Id$ as $p/n\to \lambda$.
  The Cauchy transform $w=-m(z)$ has a
  functional inverse which we write as $z=K(w)$. Rewriting
  \eqref{eq:FE} in terms of $w$ and $K(w)$   yields
  \begin{equation*}
    1-\lambda+(\lambda w-1)K(w) = -w^{-1}.
  \end{equation*}
  The $\cR$-transform is then $\cR(w)=K(w)-1/w$, for
  example from (\ref{eq:GRrelation}).
Inserting $K(w)=R(w)+1/w$ into the previous
  display and rearranging yields $\cR(w)=(1-\lambda w)^{-1}$.

Now denote by $\cR_{g^*g}^{\C,1}$ the scalar $\cR$-transform of
$g^*g$ with respect to trace $\tau_1$. By (\ref{eq:gGeq}), the above implies
$\cR_{g^*g}^{\C,1}(\alpha)=(1-\lambda \alpha)^{-1}$. The lemma follows from
relating $\cR_{g^*g}^\D(p_1\alpha)$ to $\cR_{g^*g}^{\C,1}(\alpha)$.
In the full proof of Lemma \ref{lemma:cauchycomputation}, we do the analogous
step by relating $\cR_{g^*g}^\cD$ to $G_{g^*g}^\cD$ using (\ref{eq:GRrelation}),
projecting down to $G_{g^*g}^\C$ using (\ref{eq:Gprojection}), and relating 
this back to $\cR_{g^*g}^\C$. Here, we use a simpler direct argument:

Noting that
$\alpha \in \C \subset \cD$ and $g^*gp_1\alpha=\alpha g^*g$, we have by
(\ref{eq:Rtransform}) and (\ref{eq:kappaid1}--\ref{eq:kappaid2}) that
\begin{equation}\label{eq:RDpalpha}
\cR_{g^*g}^{\cD}(p_1\alpha)
=\sum_{l \geq 1} \alpha^{l-1}\kappa_l^\D(g^*g,\ldots,g^*g).
\end{equation}
Since $g^*g$ is $(1,1)$-simple, the $\cD$-valued moments of $g^*g$ are given by
$\F^\cD((g^*g)^l)=p_1\tau_1((g^*g)^l)$.
The cumulants are defined by the moment-cumulant relations
\[\kappa_l^\cD(g^*g,\ldots,g^*g)=\sum_{\pi \in \text{NC}(l)}
\mu(\pi,\{1,\ldots,l\})\prod_{S \in \pi} \F^\cD((g^*g)^{|S|}),\]
where $\text{NC}(l)$ is the lattice of non-crossing partitions on
$\{1,\ldots,l\}$ and $\mu$ is the Mobius inversion function on this lattice,
see e.g.\ \cite[Eq.\ (11.5)]{nicaspeicherlectures}. Then
\[\kappa_l^\cD(g^*g,\ldots,g^*g)=p_1
\sum_{\pi \in \text{NC}(l)}
\mu(\pi,\{1,\ldots,l\})\prod_{S \in \pi} \tau_1((g^*g)^{|S|})
=p_1\kappa_l^{\C,1}(g^*g,\ldots,g^*g),\]
where $\kappa_l^{\C,1}$ are the scalar-valued free cumulants for trace
$\tau_1$. Recalling (\ref{eq:RDpalpha}), we obtain
$\cR_{g^*g}^{\cD}(p_1\alpha)=p_1\cR_{g^*g}^{\C,1}(\alpha)$,
which concludes the proof.
\end{proof}

\bibliography{references}{}
\bibliographystyle{alpha}
\end{document}